\DeclareMathOperator{\GS}{GS}
\def\marg{\footnote}
\newtheorem{theorem}{Theorem}
\newtheorem{lemma}[theorem]{Lemma}
\newtheorem{proposition}[theorem]{Proposition}
\newtheorem{corollary}{Corollary}
\newtheorem{sublemma}[corollary]{Lemma}
\numberwithin{corollary}{theorem}
\numberwithin{theorem}{section}
\numberwithin{equation}{section}
\theoremstyle{definition}
\newtheorem{definition}[theorem]{Definition}
\newtheorem{convention}[theorem]{Convention}
\theoremstyle{remark}
\newtheorem{remark}{Remark}
\numberwithin{remark}{section}
\newcommand{\fpr}{\mathbin{\times}}
\DeclareMathOperator{\Log}{\mathbf{Log}}
\def\f#1{\mathfrak{#1}}
\begin{document}

\title{Moduli of morphisms of logarithmic schemes}
\author{Jonathan Wise}
\address{University of Colorado, Boulder\\
Boulder, Colorado 80309-0395\\ USA}
\thanks{Wise is supported by an NSA Young Investigator's Grant, Award \#H98230-14-1-0107.}
\subjclass[2010]{14H10, 
 14D23, 
 14A20 
}
\date{\today}
\begin{abstract}
We show that there is a logarithmic algebraic space parameterizing logarithmic morphisms between fixed logarithmic schemes when those logarithmic schemes satisfy natural hypotheses.  As a corollary, we obtain the representability of the stack of stable logarithmic maps from logarithmic curves to a fixed target without restriction on the logarithmic structure of the target.

An intermediate step requires a left adjoint to pullback of \'etale sheaves, whose construction appears to be new in the generality considered here, and which may be of independent interest.
\end{abstract}

\maketitle

\setcounter{tocdepth}{1}
\tableofcontents

\section{Introduction}

Let $X$ and $Y$ be logarithmic algebraic spaces over a logarithmic scheme $S$.  Consider the functor $\Hom_{\LogSch / S}(X,Y)$%
\footnote{See Section~\ref{sec:conventions} for our conventions on notation.}
whose value on a logarithmic $S$-scheme $S'$ the set of logarithmic morphisms $X' \rightarrow Y'$, where $X' = X \fpr_S S'$ and $Y' = Y \fpr_S S'$.  Under reasonable hypotheses on these data, we show that $\Hom_{\LogSch/S}(X,Y)$ is representable by a logarithmic algebraic space over $S$.

Our strategy is to work relative to the space $\Hom_{\LogSch/S}(\u{X},\u{Y})$ parameterizing morphisms of the underlying algebraic spaces $\u{X}$ and $\u{Y}$ of $X$ and $Y$, respectively.  More precisely, $\Hom_{\LogSch/S}(\u {X},\u {Y})(S')$ is the set of morphisms of schemes $\u {X} \fpr_{\u {S}} \u {S}' \rightarrow \u {Y} \fpr_{\u {S}} \u {S}'$.  In order to guarantee that a morphism
\begin{equation*}
\Hom_{\LogSch/S}(X,Y) \rightarrow \Hom_{\LogSch/S}(\u {X},\u {Y})
\end{equation*}
exists, we need to assume that the morphism of logarithmic spaces $\pi : X \rightarrow S$ is integral, meaning $\pi^\ast M_S \rightarrow M_X$ is an integral morphism of sheaves of monoids.

\begin{theorem} \label{thm:main}
Let $\pi : X \rightarrow S$ be a proper, flat, finite presentation, geometrically reduced, integral morphism of fine logarithmic algebraic spaces.  Let $Y$ be a logarithmic stack%
\footnote{It is not necessary for $Y$ to be algebraic.}
over $S$.  Then the morphism
\begin{equation*}
\Hom_{\LogSch/S}(X,Y) \rightarrow \Hom_{\LogSch/S}(\u X, \u Y)
\end{equation*}
is representable by logarithmic algebraic spaces.
\end{theorem}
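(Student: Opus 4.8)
The plan is to work relative to the base $H = \Hom_{\LogSch/S}(\u X, \u Y)$, over which there is a universal morphism $\u f \colon \u X \to \u Y$ of the underlying algebraic spaces. It then suffices to show that for every logarithmic scheme $S'$ equipped with a map $S' \to H$ — that is, a morphism $\u f \colon \u X' \to \u Y'$ of the base-changed underlying spaces — the functor whose value on $S'$ is the set of logarithmic enhancements of $\u f$ over $S$ is representable by a logarithmic algebraic space. Here a \emph{logarithmic enhancement} is precisely a homomorphism $f^\flat \colon \u f^\ast M_Y \to M_X$ of sheaves of monoids over $\mathcal O_{X'}$, compatible with the homomorphisms from $M_S$; giving such an $f^\flat$ is exactly what promotes $\u f$ to a morphism $X' \to Y'$ of logarithmic spaces over $S'$.

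First I would decompose the datum $f^\flat$ into its characteristic part and a lift. Passing to characteristic (ghost) monoids, $f^\flat$ induces a homomorphism $\overline f^\flat \colon \u f^\ast \overline M_Y \to \overline M_X$ of constructible sheaves of monoids on $\u X'$, compatible with the structural map $\pi^\ast \overline M_S \to \overline M_X$. Conversely, once $\overline f^\flat$ is fixed, the lifts of it to a genuine homomorphism of logarithmic structures form a torsor under the abelian sheaf $\underline{\Hom}(\u f^\ast \overline M_Y^{\mathrm{gp}}, \mathcal O_{X'}^\ast)$ relative to $S$. Thus the representability problem splits into two: representing the space of characteristic homomorphisms $\overline f^\flat$, and, over that space, representing the torsor of lifts. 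The lifting problem is the more routine of the two: because $\pi$ is proper, flat, and of finite presentation, the relevant $\mathcal O^\ast$-valued $\underline{\Hom}$ sheaf pushes forward to a representable group functor over $S$ (a Picard- or torus-type object), and a torsor under a representable group functor is itself representable, with its logarithmic structure inherited from the ambient data.

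I expect the main obstacle to be representing the functor of characteristic homomorphisms $\overline f^\flat$ over $S$. The difficulty is that $\overline M_X$ and $\u f^\ast \overline M_Y$ are sheaves on $X'$, whereas we must parameterize their homomorphisms by a space living over the base $S'$. To transfer this sheaf-theoretic datum from $X$ down to $S$, the key tool is the left adjoint $\pi_!$ to the pullback $\pi^\ast$ of étale sheaves, whose construction in the present generality is carried out separately and underlies the whole argument. Using the defining adjunction $\Hom_{X'}(\mathcal A, \pi^\ast \mathcal B) \cong \Hom_{S'}(\pi_! \mathcal A, \mathcal B)$, I would rewrite the problem of parameterizing the homomorphisms $\overline f^\flat$ on $X'$ as a problem of parameterizing homomorphisms of sheaves of monoids on $S'$ built from $\pi_!$ of the relative characteristic sheaf of $X/S$. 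This reduces the question to the representability of a $\Hom$-functor of constructible sheaves of monoids over $S'$, which is handled by stratifying according to the finitely many combinatorial types of monoid homomorphism into locally closed pieces. The hypotheses that $\pi$ be proper, flat, of finite presentation, geometrically reduced, and integral are exactly what guarantee that $\pi_!$ exists and commutes with base change, so that the construction is stable in families. Assembling the space of characteristic homomorphisms with the torsor of lifts then exhibits $\Hom_{\LogSch/S}(X,Y) \to \Hom_{\LogSch/S}(\u X, \u Y)$ as representable by logarithmic algebraic spaces, the logarithmic structure on the source being induced by the universal homomorphism $f^\flat$.
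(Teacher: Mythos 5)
Your first reduction---working relative to $\Hom_{\LogSch/S}(\u X,\u Y)$ and parameterizing homomorphisms of logarithmic structures compatible with the maps from the base---matches the paper, and you correctly identify $\pi_!$ as an essential tool. But there is a genuine gap at the core: you treat the characteristic datum as a homomorphism $\u f^\ast \o M_Y \to \o M_X$ between \emph{fixed} constructible sheaves on $\u X'$, compatible with the fixed map from $\pi^\ast \o M_S$. That is not the functor to be represented. A point of $\Hom_{\LogSch/S}(X,Y)$ over a logarithmic scheme $T \to S$ is a homomorphism $M' \to M'_X$ between the \emph{base-changed} logarithmic structures, i.e.\ the pushouts of $f^\ast M_Y$ and $M_X$ along $\pi^\ast M_S \to {\pi'}^\ast M_T$, compatible with the maps from ${\pi'}^\ast M_T$; both the source and the target of the characteristic map grow with the logarithmic structure of the test object. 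Two things break as a result. First, your adjunction step does not apply: the target $\o M'_X$ is a pushout of $\o M_X$ with a sheaf pulled back from the base, but it is not itself of the form $\pi^\ast \mathcal B$, so $\Hom_{X'}(\mathcal A,\pi^\ast\mathcal B)\cong\Hom_{S'}(\pi_!\mathcal A,\mathcal B)$ cannot be invoked to move the problem down to the base. Second, and more fundamentally, because you never let the logarithmic structure of the test scheme vary, your argument contains no mechanism for producing the logarithmic structure on the representing space; asserting at the end that it is ``induced by the universal homomorphism'' begs precisely the question the theorem is about. What you do construct is, in effect, the fiber of $\Log(\Hom_{\LogSch/S}(M,M_X)) \to \Log(S)$ over the given $M_S$, i.e.\ part of Section~\ref{sec:log-rep} of the paper.

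The machinery you are missing is Gillam's criterion (Proposition~\ref{prop:minimal-criteria}): to see that the fibered category over $\LogSch$ is induced from a stack over $\Sch$ carrying a logarithmic structure, one must produce, for every object over every logarithmic test scheme, a morphism from a \emph{minimal} object (Lemmas~\ref{lem:local-existence} and~\ref{lem:minimal-existence}), show minimality is stable under pullback (Lemmas~\ref{lem:local-pullback} and~\ref{lem:minimal-pullback}), and then kill automorphisms (Section~\ref{sec:automorphisms}) to get a space rather than a stack. This is where $\pi_!$ actually enters: not through the Hom adjunction you propose, but to build the minimal characteristic monoid on the base as the pushout $\o Q_S = \o M_S \amalg_{\pi_!\pi^\ast \o M_S} \pi_!\o R$ of diagram~\eqref{eqn:1}, where $\o R$ is the local minimal object constructed monoid-theoretically in Section~\ref{sec:local-case}; algebraicity then comes from Olsson's $\Log(S)$, the relative representability of Section~\ref{sec:log-rep}, and openness of minimality (Theorem~\ref{thm:openness-minimal}). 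Finally, a smaller but real error: even with the characteristic map fixed, the lifts to a homomorphism of logarithmic structures do \emph{not} form a torsor under $\Hom(\u f^\ast\o M_Y^{\rm gp},\mathcal O_{X'}^\ast)$---compatibility with the structure maps to $\mathcal O_{X'}$ cuts out a closed subfunctor of that torsor, which may well be empty; this is exactly how the proof of Lemma~\ref{lem:log-hom-fixed-char} proceeds.
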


Combining the theorem with already known criteria for the algebraicity of $\Hom_{\LogSch/S}(\u X, \u Y)$ such as \cite[Theorem~1.2]{HR-Hom}, we obtain

\begin{corollary}
In addition to the assumptions of Theorem~\ref{thm:main}, assume as well that $Y$ is an algebraic stack over $S$ that is locally of finite presentation with quasi-compact and quasi-separated diagonal and affine stabilizers and that $X \rightarrow S$ is of finite presentation.  Then $\Hom_{\LogSch/S}(X,Y)$ is representable by a logarithmic algebraic stack.
\end{corollary}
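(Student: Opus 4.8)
The plan is to combine the algebraicity criterion \cite[Theorem~1.2]{HR-Hom} for the underlying Hom-stack with the relative representability furnished by Theorem~\ref{thm:main}. First I would check that the underlying morphisms satisfy the hypotheses of \cite[Theorem~1.2]{HR-Hom}: the map $\u X \to \u S$ is proper and flat because $\pi$ is, and it is of finite presentation by the added hypothesis, while $\u Y \to \u S$ is locally of finite presentation with quasi-compact and quasi-separated diagonal and affine stabilizers, again by assumption. That theorem then shows that $\Hom_{\LogSch/S}(\u X,\u Y)$---whose value on $S'$ is the set of morphisms $\u X \fpr_{\u S} \u{S}' \to \u Y \fpr_{\u S} \u{S}'$---is representable by an algebraic stack, locally of finite presentation over $\u S$.

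Next I would promote this algebraic stack to a logarithmic algebraic stack over $S$. Since its value on a logarithmic $S$-scheme $S'$ depends only on the underlying scheme $\u{S}'$, it is the pullback of the ordinary Hom-stack along the forgetful functor from logarithmic $S$-schemes to schemes over $\u S$; equivalently, it carries the logarithmic structure strictly pulled back from $S$. This exhibits $\Hom_{\LogSch/S}(\u X,\u Y)$ as a strict logarithmic algebraic stack over $S$.

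Finally I would apply Theorem~\ref{thm:main}, whose hypotheses on $\pi : X \to S$ and on $Y$ are all in force, to conclude that
\begin{equation*}
\Hom_{\LogSch/S}(X,Y) \to \Hom_{\LogSch/S}(\u X,\u Y)
\end{equation*}
is representable by logarithmic algebraic spaces. Pulling back a smooth strict atlas of the target algebraic stack along this morphism produces, by representability, a smooth strict cover of $\Hom_{\LogSch/S}(X,Y)$ by logarithmic algebraic spaces, so the source is a logarithmic algebraic stack; this completes the argument.

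The step I expect to be the main obstacle is the interface between the logarithmic and underlying worlds in the last paragraph: one must invoke the precise definition of logarithmic algebraic stack adopted in the paper to justify that a morphism representable by logarithmic algebraic spaces with an algebraic-stack target yields a logarithmic algebraic stack, and in particular that the atlas pulled back from $\Hom_{\LogSch/S}(\u X,\u Y)$ carries a compatible logarithmic structure along which the cover remains smooth and strict. The remaining verifications---that properness, flatness and finite presentation descend to the underlying morphisms, and that local finite presentation is preserved under base change---are routine.
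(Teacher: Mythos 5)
Your proposal is correct and follows exactly the paper's route: the paper derives this corollary in one line by citing \cite[Theorem~1.2]{HR-Hom} for the algebraicity of $\Hom_{\LogSch/S}(\u X,\u Y)$ and then invoking Theorem~\ref{thm:main} for the relative representability of $\Hom_{\LogSch/S}(X,Y)$ over it. Your additional details---verifying the hypotheses of the cited theorem, equipping the underlying Hom-stack with the strict logarithmic structure pulled back from $S$, and pulling back an atlas---are precisely the routine steps the paper leaves implicit.
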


One application is to the construction of the stack of pre-stable logarithmic maps.  Let $\f M$ denote the logarithmic stack of logarithmic curves.  The algebraicity of $\f M$ may be verified in a variety of ways, e.g., \cite[Proposition~A.3]{GS}.  For a logarithmic algebraic stack $Y$ over $S$, we write $\fM(Y/S)$ for the logarithmic stack whose $T$-points are logarithmically commutative diagrams
\begin{equation*} \xymatrix{
C \ar[r] \ar[d] & Y \ar[d] \\
T \ar[r] & S
} \end{equation*}
in which $C$ is a logarithmic curve over $T$.

Taking $X$ to be the universal curve over $\f M$ in the previous corollary yields

\begin{corollary}
Suppose that $Y \rightarrow S$ is a morphism of logarithmic algebraic stacks with quasi-finite and separated relative diagonal.  Then $\fM(Y/S)$ is representable by a logarithmic algebraic stack.
\end{corollary}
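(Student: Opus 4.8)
The plan is to realize $\fM(Y/S)$ as a relative $\Hom$ stack of exactly the type treated in the preceding corollary, and then to check that corollary's hypotheses. First I would fix the base: let $B = \f M \fpr S$ be the product (over the terminal logarithmic stack) of the stack of logarithmic curves with $S$, equipped with its projection $\beta\colon B \to S$ and with the universal logarithmic curve $\mathcal C \to B$ obtained by pulling back the universal curve over $\f M$ along the first projection. Unwinding the definition, a $T$-point of $\fM(Y/S)$ is a morphism $T \to B$ --- equivalently a logarithmic curve $C \to T$ together with a morphism $T \to S$ --- together with a logarithmic $T$-morphism $C \to Y \fpr_S T$ covering $S$. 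Since $C = \mathcal C \fpr_B T$ and $Y \fpr_S T = (Y \fpr_S B) \fpr_B T$, this is the same datum as a $T$-point of $\Hom_{\LogSch/B}(\mathcal C, Y_B)$, where $Y_B := Y \fpr_S B$. Hence $\fM(Y/S) \cong \Hom_{\LogSch/B}(\mathcal C, Y_B)$, and it suffices to apply the preceding corollary over the base $B$.

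Because $\f M$ is algebraic and $S$ is a logarithmic algebraic stack, $B$ is a logarithmic algebraic stack, whereas Theorem~\ref{thm:main} and its corollary are phrased for morphisms of logarithmic algebraic spaces. Since representability by a logarithmic algebraic stack is smooth-local on the base and the formation of $\Hom$ commutes with base change, I would choose a smooth presentation of $B$ and reduce to the case in which $B$ is a logarithmic scheme. In that case $\mathcal C$ is a logarithmic algebraic space, and the hypotheses on $\pi\colon\mathcal C \to B$ required by Theorem~\ref{thm:main} are precisely the standard properties of logarithmic curves: $\mathcal C \to B$ is proper, flat, of finite presentation, geometrically reduced (its geometric fibers are nodal, hence reduced), and an integral morphism of fine logarithmic spaces. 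The integrality is exactly what guarantees that the comparison morphism to the $\Hom$ of the underlying spaces exists, as needed to invoke Theorem~\ref{thm:main}.

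It remains to verify the conditions on the target. As $Y_B \to B$ is a base change of $Y \to S$, its relative diagonal is again quasi-finite and separated, and it is locally of finite presentation by the conventions of Section~\ref{sec:conventions}. A separated quasi-finite diagonal forces each stabilizer group scheme to be separated and quasi-finite over a field, hence of finite type and dimension zero, hence finite and in particular affine; the same hypothesis makes the diagonal quasi-compact (quasi-finite morphisms are of finite type) and quasi-separated (being separated). These are exactly the hypotheses imposed on the target in the preceding corollary, so that corollary applies to $\Hom_{\LogSch/B}(\mathcal C, Y_B)$ and shows that $\fM(Y/S)$ is representable by a logarithmic algebraic stack.

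The only step requiring genuine care is the passage from the stacky base $B$ and the stacky source $\mathcal C$ to the algebraic-space setting of Theorem~\ref{thm:main}; this is resolved by descent along a smooth cover of $B$, after which $\mathcal C$ becomes an honest family of logarithmic curves. The remaining work --- recognizing $\fM(Y/S)$ as a relative $\Hom$ stack and translating the diagonal hypothesis into affine stabilizers together with a quasi-compact, quasi-separated diagonal --- is formal.
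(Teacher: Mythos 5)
Your proposal is correct and is essentially the paper's own proof: the paper disposes of this corollary in one line, by taking $X$ to be the universal curve over $\f M$ (implicitly pulled back to $\f M \fpr S$) in the preceding corollary, which is exactly your identification $\fM(Y/S) \cong \Hom_{\LogSch/B}(\mathcal C, Y_B)$ with $B = \f M \fpr S$, followed by the same hypothesis checks. The extra details you supply — smooth descent to reduce the stacky base $B$ and stacky source $\mathcal C$ to the algebraic-space setting of Theorem~\ref{thm:main}, the standard properties of logarithmic curves, and the translation of a quasi-finite separated diagonal into affine stabilizers plus a quasi-compact quasi-separated diagonal — are all sound, the only blemish being your claim that local finite presentation of $Y_B \rightarrow B$ follows ``by the conventions of Section~\ref{sec:conventions}'' (it does not; this hypothesis is silently elided by the paper's own statement as well).
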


This improves on several previous results:
\begin{enumerate}
\item \cite{Chen} required $Y$ to have a rank~$1$ Deligne--Faltings logarithmic structure, 
\item \cite{AC} required $Y$  to have a generalized Deligne--Faltings logarithmic structure, and 
\item \cite{GS} required $Y$ to have a Zariski logarithmic structure.
\end{enumerate}

The evaluation space for stable logarithmic maps can also be constructed using Theorem~\ref{thm:main}.  Recall that the standard logarithmic point $P$ is defined by restricting the divisorial logarithmic structure of $\mathbf{A}^1$ to the origin.  A family of standard logarithmic points in $Y$ parameterized by a logarithmic scheme $S$ is a morphism of logarithmic algebraic stacks $S \times P \rightarrow Y$.  Following \cite{ACGM}, we define $\wedge Y$ to be the fibered category of standard logarithmic points of $Y$.

\begin{corollary}[{\cite[Theorem~1.1.1]{ACGM}}]
If $Y$ is a logarithmic algebraic stack with quasi-finite and quasi-separated diagonal then $\wedge Y$ is representable by an algebraic stack with logarithmic structure.
\end{corollary}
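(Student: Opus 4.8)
The plan is to exhibit $\wedge Y$ as a logarithmic $\Hom$ space and then invoke Theorem~\ref{thm:main} directly. Write $S_0 = \operatorname{Spec}\mathbf{Z}$ for the terminal logarithmic scheme, equipped with its trivial logarithmic structure, and regard both the standard logarithmic point $P$ and the stack $Y$ as living over $S_0$. For a logarithmic scheme $S$, giving a morphism $S \fpr P \to Y$ over $S_0$ is the same as giving a morphism $P \fpr S \to Y \fpr S$ over $S$, by the universal property of the fibre product, so that
\[
\wedge Y \;=\; \Hom_{\LogSch/S_0}(P, Y).
\]
I would deliberately avoid routing through the corollary on algebraicity of $\Hom$ spaces, whose finiteness hypotheses on $Y$ (affine stabilizers, local finite presentation) are not available here; instead I would apply Theorem~\ref{thm:main} with $X = P$ over $S_0$ and then analyze the target of the morphism it produces.

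Next I would verify the hypotheses of Theorem~\ref{thm:main} for the projection $\pi : P \to S_0$. Since $\u P = S_0$, the underlying morphism $\u\pi$ is the identity and is therefore proper, flat, and of finite presentation, with reduced geometric fibres; moreover $P$ is fine. The one hypothesis meriting genuine attention is integrality: on characteristic monoids $\pi^\ast M_{S_0} \to M_P$ is the inclusion $0 \hookrightarrow \mathbf{N}$ of the trivial monoid into the free monoid of rank one, which is integral (equivalently, $\mathbf{Z}[\mathbf{N}] = \mathbf{Z}[t]$ is flat over $\mathbf{Z}$). With all hypotheses in hand, Theorem~\ref{thm:main} shows that
\[
\wedge Y \;=\; \Hom_{\LogSch/S_0}(P, Y) \longrightarrow \Hom_{\LogSch/S_0}(\u P, \u Y)
\]
is representable by logarithmic algebraic spaces.

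It remains to identify the target and conclude. Because $\u P = S_0$ is terminal one has $\u P \fpr S = S$, so $\Hom_{\LogSch/S_0}(\u P, \u Y)$ assigns to $S$ the set of morphisms $S \to \u Y$; that is, this $\Hom$ space is canonically $\u Y$ itself. As $Y$ is a logarithmic algebraic stack, $\u Y$ is an algebraic stack, and a stack carrying a morphism to an algebraic stack that is representable by algebraic spaces is again algebraic. Hence the representability of $\wedge Y \to \u Y$ by logarithmic algebraic spaces exhibits $\wedge Y$ as an algebraic stack equipped with a logarithmic structure; the quasi-finite and quasi-separated diagonal hypotheses on $Y$ then pass to the diagonal of $\wedge Y$, yielding the refined statement. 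I expect the main obstacle to lie not in any single deep step but in the bookkeeping of the first two paragraphs: pinning down the identification of $\wedge Y$ with the absolute logarithmic $\Hom$ functor, and checking that the rank-one logarithmic structure on $P$ renders $\pi$ integral, which is precisely what licenses the application of Theorem~\ref{thm:main}.
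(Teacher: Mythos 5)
Your proof is correct and is exactly the argument the paper intends (the paper leaves it implicit, saying only that the evaluation space "can also be constructed using Theorem~\ref{thm:main}"): realize $\wedge Y$ as $\Hom_{\LogSch/S_0}(P,Y)$ over the base $S_0 = \Spec \mathbf{Z}$ with trivial logarithmic structure, apply Theorem~\ref{thm:main} to the proper, flat, integral morphism $P \to S_0$ (integrality being the inclusion $0 \to \mathbf{N}$ on characteristic monoids), identify the target $\Hom_{\LogSch/S_0}(\u P, \u Y)$ with $\u Y$, and conclude algebraicity from relative representability by logarithmic algebraic spaces over the algebraic stack $\u Y$. The only cosmetic point is your final sentence: the quasi-finite and quasi-separated diagonal hypotheses are inherited from the statement of \cite{ACGM} being recovered and are not actually consumed anywhere along this route, so no claim about the diagonal of $\wedge Y$ is needed to reach the stated conclusion.
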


\subsection{Outline of the proof}

Working relative to $\Hom_{\LogSch/S}(\u X, \u Y)$, the question of the algebraicity of $\Hom_{\LogSch/S}(X,Y)$ is reduced to showing that, given a logarithmic algebraic space $X$ and a logarithmic algebraic stack $Y$, both over $S$, as well as a commutative triangle of algebraic stacks,
\begin{equation*} \xymatrix{
\u {X} \ar[dr]_{\pi} \ar[rr]^{f} & & \ar[dl] \u {Y} \\
& \u {S}
}\end{equation*}
the lifts of $f$ to an $S$-morphism of logarithmic algebraic stacks making the triangle commute are representable by a logarithmic algebraic stack.

This problem reduces immediately to the verification that morphisms of logarithmic structures $f^\ast M_Y \rightarrow M_X$ compatible with the maps \emph{from} $\pi^\ast M_S$ are representable by a logarithmic algebraic space over $S$.  We may therefore eliminate $Y$ from our consideration by setting $M = f^\ast M_Y$ and restricting our attention to the functor $\Hom_{\LogSch/S}(M,M_X)$ that parameterizes morphisms of logarithmic structures $M \rightarrow M_X$.

Stated precisely, the $S'$-points of $\Hom_{\LogSch/S}(M,M_X)$ are the morphisms of logarithmic structures $M' \rightarrow M'_X$, where $M'$ and $M'_X$ are the logarithmic structures deduced by base change on $\u X' = \u X \fpr_{\u S} \u S'$, that fit into a commutative triangle:
\begin{equation*} \xymatrix{
& {\pi'}^\ast M_{S'} \ar[dl] \ar[dr] \\
M' \ar[rr] & & M'_X
} \end{equation*}

As is typical for logarithmic moduli problems, we now separate the question of the representability of $\Hom_{\LogSch/S}(M,M_X)$ by a logarithmic algebraic stack into a question about the representability of a larger stack $\Log(\Hom_{\LogSch/S}(M,M_X)) = \Hom_{\Sch / \Log(S)}(M,M_X)$ over \emph{schemes} (not logarithmic schemes), followed by the identification of an open substack of \emph{minimal} objects within $\Log(\Hom_{\LogSch/S}(M,M_X))$ that represents $\Hom_{\LogSch/S}(M,M_X)$.

When $\Log(\Hom_{\LogSch/S}(M,M_X))$ is viewed as a category, its objects are the same as the objects of $\Hom_{\LogSch/S}(M,M_X)$.  Its fiber over a scheme $T$ therefore consists of tuples $(M_T,f,\alpha)$ where 
\begin{enumerate}[label=(\roman{*})]
\item $M_T$ is a logarithmic structure on $T$,
\item $f : (T,M_T) \rightarrow S$ is a morphism of logarithmic schemes, and
\item $\alpha : f^\ast M \rightarrow f^\ast M_X$ is a morphism of logarithmic structures on $\u X \fpr_{\u S} T$ that is compatible with the maps from $M_T$.
\end{enumerate}
The distinction between the categories $\Log(\Hom_{\LogSch/S}(M,M_X))$ and $\Hom_{\LogSch/S}(M,M_X)$ is that morphisms in the former are required to be cartesian over the category of schemes, while in the latter they are only required to be cartesian over the category of logarithmic schemes.  That is, $(T,M_T,f,\alpha) \rightarrow (T',M_{T'},f',\alpha')$ in $\Hom_{\LogSch/S}(M,M_X)$ lies in $\Log(\Hom_{\LogSch/S}(M,M_X))$ only if the map $(T,M_T) \rightarrow (T',M_{T'})$ is strict.

We show in Section~\ref{sec:log-rep} that $\Log(\Hom_{\LogSch/S}(M,M_X))$ is representable by an algebraic space relative to $\Log(S)$.  As Olsson has proved that $\Log(S)$ is algebraic, the representability of $\Log(\Hom_{\LogSch/S}(M,M_X))$ by an algebraic \emph{stack} follows.

In Sections~\ref{sec:local-case} and~\ref{sec:global-case} we use Gillam's criterion (Section~\ref{sec:minimality}) to prove that the fibered category $\Hom_{\LogSch/S}(M,M_X)$ over logarithmic schemes is induced from a fibered category over schemes with a logarithmic structure.  Section~\ref{sec:local-case} treats the case where $X = S$ by adapting methods from homological algebra to commutative monoids.  In Section~\ref{sec:global-case}, we transform the local minimal object of Section~\ref{sec:local-case} to a global minimal object by means of a left adjoint to pullback for \'etale sheaves (constructed, under suitable hypotheses, in Section~\ref{sec:pi_0}), whose existence appears to be a new observation.

Gillam's criterion characterizes the fibered category over schemes inducing the fibered category $\Hom_{\LogSch/S}(M,M_X)$ over logarithmic schemes:  it is the substack of \emph{minimal objects} of $\Hom_{\LogSch/S}(M,M_X)$.  A slight augmentation of that criterion (described in Section~\ref{sec:minimality}) implies that the substack of minimal objects is open in $\Log(\Hom_{\LogSch/S}(M,M_X))$.  Combined with the algebraicity of $\Log(\Hom_{\LogSch/S}(M,M_X))$ proved in Section~\ref{sec:log-rep}, the verification of Gillam's criteria in Sections~\ref{sec:local-case} and~\ref{sec:global-case} implies that $\Hom_{\LogSch/S}(M,M_X)$ is representable by a logarithmic algebraic \emph{stack}.  A direct analysis of the stabilizers of logarithmic maps in Section~\ref{sec:automorphisms} then implies that $\Hom_{\LogSch/S}(M,M_X)$ is representable by an algebraic \emph{space}.

\subsection{Remarks on hypotheses}
\label{sec:hypotheses}

It is far from clear that all of the hypotheses of Theorem~\ref{thm:main} are essential.  We summarize how they are used in the proof:  Integrality of $X$ over $S$ is used to guarantee the existence of a morphism $\Hom_{\LogSch/S}(X,Y) \rightarrow \Hom_{\LogSch/S}(\u X,\u Y)$; it is also used in the construction of minimal objects.  Properness and finite presentation are used to guarantee the representability of $\Log(\Hom_{\LogSch/S}(M,M_X))$ by an algebraic stack.  Flatness, finite presentation, and geometrically reduced fibers are used to guarantee the existence of a left adjoint to pullback of \'etale sheaves, used in the construction of global minimal objects from local ones.

\subsection{Conventions}
\label{sec:conventions}

We generally follow the notation of \cite{Kato} concerning logarithmic structures, except that we write $\u X$ for the scheme (or fibered category) underlying a logarithmic scheme (or fibered category) $X$.  The logarithmic structures that appear in this paper will all be fine, although we will usually point this out in context.  If $M$ is a logarithmic structure on $X$, we write $\exp : M \rightarrow \mathcal{O}_X$ for the structural morphism and $\log : \mathcal{O}_X^\ast \rightarrow M$ for the reverse inclusion.

It is occasionally convenient to pass only part of the way from a chart for a logarithmic structure to its associated logarithmic structure.  We formalize this in the following definition:

\begin{definition} \label{def:quasi-log}
A \emph{quasi-logarithmic structure} on an scheme $X$ is an extension $N$ of an \'etale sheaf of integral%
\footnote{The integrality assumption is not necessary in the definition.  It is included to avoid qualifying every quasi-logarithmic structure that appears below with the adjective `integral'.}
monoids $\o N$ by $\mathcal{O}_X^\ast$ and a morphism $N \rightarrow \mathcal{O}_X$ compatible with the inclusions of $\mathcal{O}_X^\ast$.  We will say that a quasi-logarithmic structure is \emph{coherent} if its associated logarithmic structure is coherent.  If $f : X' \rightarrow X$ is a morphism of schemes, the \emph{pullback} $f^\ast N$ of $N$ to $X'$ is obtained by pushout via $f^{-1} \mathcal{O}_X^\ast \rightarrow \mathcal{O}_{X'}$ from the pulled back extension $f^{-1} N$:
\begin{equation*} \xymatrix{
0 \ar[r] & f^{-1} \mathcal{O}_X^\ast \ar[r] \ar[d] & f^{-1} N \ar[r] \ar[d] & f^\ast \o N \ar[r] \ar@{=}[d] & 0 \\
0 \ar[r] & \mathcal{O}_{X'}^\ast \ar[r] & f^\ast N \ar[r] & f^\ast \o N \ar[r] & 0
} \end{equation*}
\end{definition}

We write $\Hom(A,B)$ for the set of morphisms between two objects of the same type.  When $A$ and $B$ and the morphisms between them may reasonably be construed to vary with objects of a category $\mathscr{C}$, we write $\Hom_{\mathscr{C}}(A,B)$ for the functor or fibered category of morphisms between $A$ and $B$.  Occasionally, we also employ a subscript on $\Hom$ to indicate restrict to homomorphisms preserving some additional structure.  We rely on context to keep the two meanings of these decorations distinct.

\subsection{Acknowledgements}

The central construction of this paper was inspired by the construction of basic logarithmic maps in \cite{GS}.  I am very grateful to Qile Chen and Steffen Marcus for their comments on an early draft, and to Dan Abramovich for his many detailed comments and his encouragement that I present these results in their natural generality.  Samouil Molcho discovered an error in my original presentation of the main construction.  I am very grateful to him for that correction, as well as for the attached Appendix~\ref{app:calc}, which translates the results of this paper into explicit formulas.

This work was supported by an NSA Young Investigator's grant, award number H98230-14-1-0107.

\section{Algebraicity relative to the category of schemes}
\label{sec:log-rep}

We show that the morphism
\begin{equation} \label{eqn:7}
\Log(\Hom_{\LogSch/S}(M,M_X)) \rightarrow \Log(S)
\end{equation}
is representable by algebraic spaces.  Combined with the algebraicity of $\Log(S)$ \cite[Theorem~1.1]{Olsson-Log}, this implies $\Log(\Hom_{\LogSch/S}(M,M_X))$ is representable by an algebraic stack.

\begin{proposition} \label{prop:log-rep}
Let $S = (\u {S}, M_S)$ be a logarithmic scheme, $X$ a proper $S$-scheme with $\pi : X \rightarrow S$ denoting the projection.  Assume given logarithmic structures $M$ and $M_X$ on $X$ with morphisms of logarithmic structures $\pi^\ast M_S \rightarrow M$ and $\pi^\ast M_S \rightarrow M_X$.  Assume as well that $M_S$, $M_X$, and $M$ are all coherent.  Then the morphism~\eqref{eqn:7} is representable by algebraic spaces.
\end{proposition}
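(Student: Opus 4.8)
The plan is to work étale-locally on the algebraic stack $\Log(S)$ and reduce the representability of~\eqref{eqn:7} to a concrete statement over a scheme. Let $U \rightarrow \Log(S)$ be a scheme over $\Log(S)$, corresponding to a logarithmic structure $M_U$ on $U$ together with a morphism $f \colon (U, M_U) \rightarrow S$. Because morphisms over $\Log(S)$ are strict, the pullbacks $M' = f^\ast M$ and $M_X' = f^\ast M_X$ of the given coherent logarithmic structures are honest pullbacks of sheaves of monoids on $X_U = \u X \fpr_{\u S} U$, as is $\Lambda := \pi_U^\ast M_U$, where $\pi_U \colon X_U \rightarrow U$. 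It therefore suffices to show that the functor sending a $U$-scheme $V$ to the set of morphisms of logarithmic structures $M'|_{X_V} \rightarrow M_X'|_{X_V}$ lying under $\Lambda|_{X_V}$ is representable by an algebraic space over $U$. I would build this functor as a tower of two representable pieces, organized by the induced morphism on characteristic monoids.

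First, the discrete part. Any morphism $\alpha \colon M' \rightarrow M_X'$ over $\Lambda$ induces a morphism $\overline\alpha \colon \overline{M'} \rightarrow \overline{M_X'}$ of characteristic sheaves over $\overline\Lambda = \pi_U^{-1} \overline{M_U}$. Coherence of $M$, $M_X$, and $M_S$ guarantees that these characteristic sheaves are constructible sheaves of fine monoids on the \'etale site of $X_U$, so a morphism between them is a rigid, locally constant datum: on a suitable stratification of $X_U$ it is determined by finitely many morphisms of fine monoids. Using properness of $\pi_U$ to control how these strata meet the fibers, I would show that the functor parameterizing the characteristic morphisms $\overline\alpha$ (as $V$ varies) is representable by a monomorphism into $U$, namely a disjoint union of locally closed subschemes. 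Concretely, the combinatorial type of $\overline\alpha$ over a geometric fiber of $\pi_U$ is a locally constant condition on $U$, which properness converts into a locally closed stratification.

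Second, the lifting part. Fixing a characteristic morphism $\overline\alpha$, the lifts to genuine morphisms of logarithmic structures $\alpha \colon M' \rightarrow M_X'$ compatible with the structural maps $\exp \colon M', M_X' \rightarrow \mathcal{O}_{X_U}$ form a pseudo-torsor under a sheaf of abelian groups assembled from $\mathcal{O}_{X_U}^\ast$ and the constructible sheaf $\overline{M'}$: two lifts differ by a homomorphism $\overline{M'}^{\,\mathrm{gp}} \rightarrow \mathcal{O}_{X_U}^\ast$ constrained by compatibility with $\exp$. Pushing this group sheaf forward along the proper morphism $\pi_U$ -- using cohomology and base change for the $\mathcal{O}^\ast$-part and constructibility for the characteristic part -- I would identify the functor of lifts with the sections of a commutative group algebraic space over $U$ (of $\mathbb{G}_m$-Weil-restriction type), together with a locally closed condition cutting out the locus where a lift exists. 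This makes the functor of lifts representable by an algebraic space over each stratum of the first part. Combining the two, the functor of morphisms $M' \rightarrow M_X'$ over $\Lambda$ is a torsor over a monomorphism into $U$, hence representable by an algebraic space over $U$, and~\eqref{eqn:7} is representable by algebraic spaces.

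The hardest step I expect to be the global control of the characteristic morphisms over the proper family in the first part. Morphisms of fine monoids do not form a finite set, so the constructible sheaf $\underline{\Hom}_{\overline\Lambda}(\overline{M'}, \overline{M_X'})$ has infinite stalks, and one must use properness carefully to see that only finitely many types of global morphism occur over each connected component of $U$ and that each occurs on a locally closed locus. The lifting part is comparatively routine once the discrete part is in place, being a standard proper-pushforward computation for $\mathcal{O}^\ast$ and a constructible sheaf of monoids.
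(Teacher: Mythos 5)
Your overall architecture --- factor the functor through the induced map on characteristic monoids, then treat the discrete part and the $\mathcal{O}^\ast$-part separately --- is the same decomposition the paper uses. But there is a genuine error in your first step, precisely at the point you yourself flag as the hardest. The functor parameterizing characteristic morphisms $\overline\alpha$ over the family is neither a monomorphism into $U$ nor a disjoint union of locally closed subschemes of $U$, and properness does not give the finiteness of global types you hope for. Two examples: (1) Take $M_S$ trivial and let $M$ and $M_X$ both be the logarithmic structure associated to the prelogarithmic structure $\mathbf{N} \rightarrow \mathcal{O}_X$, $1 \mapsto 0$, so that $\overline{M} = \overline{M}_X = \mathbf{N}$ is constant. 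If the fibers of $X_U \rightarrow U$ are connected, the characteristic-morphism functor is $\coprod_{n \geq 0} U \rightarrow U$ (multiplication by each $n \geq 0$, all defined over all of $U$): infinitely many global morphisms, and not a monomorphism. (2) Monodromy: let $\overline{M}_X$ be pulled back from a sheaf on $U$ with stalk $\mathbf{N}^2$ twisted by a connected \'etale double cover $U' \rightarrow U$ that interchanges the two factors (this is still coherent, since it acquires a chart after base change to $U'$), and $\overline{M} = \mathbf{N}$ constant. The characteristic morphisms $1 \mapsto (a,b)$ with $a \neq b$ exist over $U'$ but not over $U$, so the representing object contains a copy of $U'$, \'etale of degree $2$ over $U$, which is not a disjoint union of subschemes of $U$ at all. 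The correct statement is that this functor is representable by an algebraic space \emph{\'etale} over $U$, possibly with infinitely many sheets and nontrivial covers; no stratification of $U$ captures it, and the error propagates to your final assembly, where the answer is again described as a torsor over a monomorphism into $U$.

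The paper avoids the fiberwise analysis entirely, which is worth internalizing because it is what makes the proof short. It first proves representability \emph{over $X$}, not over $U$: the characteristic part $\Hom_{\Sch/X}(\overline{P},\overline{Q})$ is the espace \'etal\'e of the Hom sheaf (constructibility ensures compatibility with base change), and the lifts with fixed characteristic map form a relatively affine scheme over $X$ --- a torsor under $\Hom_{\Sch/X}(\overline{P},\mathbf{G}_m)$, with compatibility with $\exp$ imposed afterwards as a closed condition using the separatedness of $\mathbf{A}^1$. Only then does it push forward, once, along the proper, flat, finitely presented morphism $\pi$, invoking Hall--Rydh's representability theorem for $\pi_\ast$ of a quasi-compact, quasi-separated, locally finitely presented algebraic space over $X$. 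All questions of how fiberwise data spread out over $U$ (including both phenomena in the examples above) are absorbed into that single citation. If you want to salvage your fiberwise route, the fix is to replace ``monomorphism/stratification'' by ``\'etale algebraic space over $U$'': the sections functor of an \'etale space along a proper flat finitely presented family is formally \'etale and locally of finite presentation, hence \'etale --- but proving it is an algebraic space at all, without a Hall--Rydh-type theorem, is essentially the hard content your stratification was supposed to supply.
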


We divide this into a local problem in which $X = S$ and then pass to the general case.

\begin{theorem}[{\cite[Proposition~2.9]{GS}}] \label{thm:log-hom}
Suppose that $P$ and $Q$ are coherent logarithmic structures on a scheme $X$.  Then $\Hom_{\Sch / X}(P,Q)$ is representable by an algebraic space over $X$.
\end{theorem}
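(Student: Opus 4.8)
The plan is to reduce to the case in which $P$ and $Q$ are given by charts, and then to build $\Hom_{\Sch/X}(P,Q)$ out of the representable object attached to the characteristic monoid $\overline Q$. Since an algebraic space is an \'etale sheaf, and a functor that is an \'etale sheaf and is \'etale-locally representable by algebraic spaces is itself so representable, I would first pass to an \'etale cover of $X$ on which $P$ and $Q$ admit charts $\mathbf P \to P$ and $\mathbf Q \to Q$ by finitely generated monoids, with $\mathbf P$ generated by $p_1,\dots,p_n$ subject to finitely many relations. By the adjunction between pre-logarithmic and logarithmic structures, a morphism $P \to Q$ over a scheme $T \to X$ is the same datum as a monoid homomorphism from the constant sheaf $\mathbf P$ to $Q_T$ compatible with the structural maps to $\mathcal O_T$; since $\mathbf P$ is constant and finitely presented, this amounts to a tuple $(q_1,\dots,q_n)$ of sections of $Q_T$ with $\exp(q_i) = \beta(p_i)$, where $\beta : \mathbf P \to \mathcal O_X$ is the chart, subject to the relations of $\mathbf P$. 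These relations are equalities between products of the $q_i$ and units, hence closed conditions, so it suffices to represent, for a fixed $a \in \Gamma(X,\mathcal O_X)$, the functor $F_a$ sending $T$ to the set of sections $q$ of $Q_T$ with $\exp(q) = a|_T$; the space we want is then a finite fiber product of copies of the $F_{\beta(p_i)}$ cut out by an equalizer.

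To represent $F_a$, I would use that the characteristic sheaf $\overline Q = Q/\mathcal O_X^\ast$ is constructible and therefore representable by an algebraic space $E$ that is \'etale over $X$ --- concretely, a disjoint union of locally closed subschemes on which $\overline Q$ is constant. There is a tautological section of $\overline Q$ over $E$, and its preimage under $Q \to \overline Q$ is a torsor $R$ under $\mathbb G_m$ over $E$. The structural map restricts to a morphism $\exp : R \to \mathbb A^1$, and $F_a$ is identified with the fiber of $\exp$ over $a$, that is, the closed subspace of $R$ where $\exp(q) = a$. As a closed subspace of a $\mathbb G_m$-torsor over the algebraic space $E$, the functor $F_a$ is representable by an algebraic space over $X$. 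Assembling the finite fiber products and the equalizer imposed by the relations of $\mathbf P$, and then gluing over the original \'etale cover, shows that $\Hom_{\Sch/X}(P,Q)$ is representable by an algebraic space over $X$.

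The step I expect to be the main obstacle is the representation of the building block $F_a$, precisely because the extension $0 \to \mathcal O_X^\ast \to Q \to \overline Q \to 0$ need not split: one cannot choose a global lift of a characteristic section, so the equation $\exp(q)=a$ must be analyzed only after spreading out over $E$, where a tautological lift becomes available. One must check that $\overline Q$ is genuinely representable by an \'etale algebraic space (the standard statement for constructible sheaves of monoids) and that the torsor $R$ and the closed condition $\exp = a$ behave well under the gluing over the \'etale cover. Once $F_a$ is understood, the remaining manipulations --- the finite fiber products, the equalizer encoding the monoid relations of $\mathbf P$, and \'etale descent --- are formal.
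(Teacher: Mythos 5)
Your proposal is correct, but it organizes the argument differently from the paper. The paper fibers $\Hom_{\Sch/X}(P,Q)$ over the space $\Hom_{\Sch/X}(\overline{P},\overline{Q})$ of characteristic homomorphisms: that space is \'etale over $X$ (the espace \'etal\'e of the Hom sheaf, with constructibility of $\overline{P}$ and $\overline{Q}$ ensuring compatibility with base change), and the fiber over a fixed $\overline{u}$ is affine --- after replacing $Q$ by $\overline{u}^{-1}Q$ so that $\overline{u} = \id$, it is a torsor under $\Hom_{\Sch/X}(\overline{P},\Gm)$ cut down by the closed condition of compatibility with $\exp$. You instead pick generators of a chart $\mathbf{P}$ and embed $\Hom_{\Sch/X}(P,Q)$ into a product of section spaces $F_{\beta(p_i)}$, each a closed subspace of a $\Gm$-torsor over the espace \'etal\'e $E$ of $\overline{Q}$. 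The ingredients (espaces \'etal\'es, $\Gm$-torsors, separatedness of $\mathbf{A}^1$) are the same, but your bookkeeping buys something genuine: because sections of a sheaf commute with base change tautologically once one passes to the espace \'etal\'e, you never need the paper's lemma on base change for Hom sheaves of constructible sheaves, and in fact you never use coherence (or constructibility) of $Q$ at all --- only coherence of $P$, which supplies the finitely presented chart. Conversely, the paper's fibration exhibits the Hom space \'etale-locally as affine over an \'etale space, and its fiberwise torsor description absorbs all the monoid relations at once rather than imposing them one equalizer at a time.

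Two small corrections. The relations of $\mathbf{P}$ are not closed conditions but locally closed ones: two sections of $Q_T$ agree exactly when their images in $\overline{Q}_T$ agree (an open condition, since the diagonal of the \'etale space $E$ over $X$ is an open immersion) and the discrepancy unit equals $1$ (a closed condition on that open locus). This does not affect representability, since locally closed immersions are representable; but ``closed'' is not accurate. Similarly, the espace \'etal\'e of $\overline{Q}$ is not literally a disjoint union of locally closed subschemes of $X$ on which $\overline{Q}$ is constant; what is true, and all that you use, is that it is an algebraic space \'etale over $X$ --- a fact that holds for every \'etale sheaf, constructible or not.
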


For isomorphisms, this is \cite[Corollary~3.4]{Olsson-Log}.

\begin{proof}
The question of the algebraicity of $\Hom_{\Sch / X}(P,Q)$ may be separated into one about the algebraicity of $\Hom_{\Sch / X}(\o P, \o Q)$ and another about the relative algebraicity of the map
\begin{equation} \label{eqn:12}
\Hom_{\Sch / X}(P,Q) \rightarrow \Hom_{\Sch / X}(\o P, \o Q) .
\end{equation}

\begin{sublemma} \label{lem:log-char-hom}
The functor $\Hom_{\Sch/X}(\o P, \o Q)$ is representable by an \'etale algebraic space over $X$.
\end{sublemma}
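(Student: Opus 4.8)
The plan is to exhibit the functor directly as a finite limit of algebraic spaces étale over $X$, using the equivalence between étale sheaves of sets on $X$ and algebraic spaces étale over $X$. Before constructing anything I would record two soft observations. First, the question is local for the étale topology on $X$: since $\Hom_{\Sch/X}(\o P,\o Q)$ is a sheaf (homomorphisms of sheaves of monoids glue), étale descent for algebraic spaces lets me build the representing space after passing to an étale cover of $X$ and glue afterwards. Second, the functor is formally étale, because for a nilpotent closed immersion $T_0 \hookrightarrow T$ over $X$ the restriction of small étale sites is an equivalence carrying $\o P_T, \o Q_T$ to $\o P_{T_0}, \o Q_{T_0}$, whence $\Hom(\o P_T,\o Q_T) = \Hom(\o P_{T_0},\o Q_{T_0})$. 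Thus once representability is known, étaleness is automatic, and the answer depends only on $X_{\mathrm{red}}$.

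For the construction I would use coherence to reduce to constant data. After shrinking $X$ étale-locally around a geometric point, coherence makes $\o Q$ a constructible sheaf of fine monoids, which corresponds under the equivalence between étale sheaves of sets and étale $X$-spaces to an algebraic space $E$ étale over $X$. The essential point is that this equivalence is compatible with arbitrary base change $T \to X$ and preserves finite limits, so that $\Hom_X(T,E) = \Gamma(T,\o Q_T)$ for every $X$-scheme $T$, and likewise $\Hom_X(T, E^{\times n}) = \Gamma(T,\o Q_T)^{\times n}$. Coherence of $\o P$ similarly provides a chart: a fine monoid $P_0$ and a surjection of sheaves $\underline{P_0} \to \o P$ exhibiting $\o P$ as the quotient of $\underline{P_0}$ by a finitely generated congruence, where the finite generation uses that finitely generated commutative monoids are finitely presented.

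The heart of the matter is then the identification of $\Hom(\o P_T,\o Q_T)$ as a subfunctor of $\Hom(\underline{P_0},\o Q_T)$. By adjunction between the constant-sheaf functor and global sections, $\Hom(\underline{P_0},\o Q_T) = \Hom_{\mathrm{Mon}}(P_0,\Gamma(T,\o Q_T))$; choosing generators $p_1,\dots,p_n$ of $P_0$ identifies this with the subset of $\Gamma(T,\o Q_T)^{\times n}$ cut out by the finitely many defining relations of $P_0$, that is, by finitely many equalities of sections of $\o Q_T$. Intersecting further with the locus on which the chosen homomorphism factors through the (finitely generated) congruence defining $\o P$ imposes finitely many more equalities of the same kind. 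Each equality is an equalizer of two maps $E^{\times n} \to E$ of étale $X$-spaces, so the whole functor is a finite limit, formed in the category of algebraic spaces étale over $X$, of copies of $E$; since finite limits of étale $X$-spaces are again étale over $X$, the functor is representable by an étale $X$-space locally on $X$, and étale descent then glues these to give the assertion globally.

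I expect the main obstacle to be exactly the reduction, via coherence, from the a priori infinite system of relations defining a homomorphism of sheaves (the full congruence presenting $\o P$, together with all relations holding among sections of $\o Q$) to a finite system of equalities of sections of $\o Q$. This is what guarantees that the cut-out locus is genuinely of finite presentation, hence an étale rather than merely formally étale $X$-space, and it is precisely where the coherence hypotheses on $\o P$ and $\o Q$ are used. A secondary point demanding care is that the espace-étalé equivalence must be applied compatibly with base change along arbitrary, not necessarily étale, test schemes $T$, so that $E$ really represents the functor $T \mapsto \Gamma(T,\o Q_T)$ on all of $\Sch/X$.
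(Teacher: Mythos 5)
The first half of your construction is sound: the étale-local reduction, the identification $\Hom_X(T,E)=\Gamma(T,\o Q_T)$ for arbitrary $T \rightarrow X$ (the espace étalé construction does commute with any base change), and the observation that the finitely many relations among chosen generators of $P_0$ cut out a finite limit of étale $X$-spaces (equalizers of maps between étale $X$-spaces are open subspaces, since the diagonal of an étale map is an open immersion). The genuine gap is the step you yourself flagged as the main obstacle: the claim that factoring through the congruence presenting $\o P$ ``imposes finitely many more equalities of the same kind.'' Finite generation of commutative monoid congruences (R\'edei) makes the kernel congruence of $\underline{P_0} \rightarrow \o P$ finitely generated \emph{stalk by stalk}, but it is not generated by finitely many \emph{constant} pairs in $P_0 \times P_0$, even after étale localization: its generators are sections over proper étale opens of $X$, precisely because $\o P$ is not locally constant (the stalks of a characteristic sheaf jump on closed subsets). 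The condition imposed by such a generator has the form ``$h(p)=h(p')$ over the preimage of an open $V \subsetneq X$,'' and this is not an equalizer of morphisms of étale $X$-spaces; it cuts out neither an open subspace of your $W$ nor anything étale over $X$.

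This failure is fatal rather than a matter of bookkeeping. Take $X=\mathbf{A}^1$, let $P$ be the divisorial logarithmic structure at the origin, so $\o P$ has stalk $\mathbf{N}$ at the origin and $0$ elsewhere, with free chart $P_0=\mathbf{N}$ (so $W=E$ and the only constraint comes from the congruence, generated over the diagonal by the single section $(1,0)$ over $U=\mathbf{A}^1\setminus\{0\}$); let $Q$ have chart $\mathbf{N}\rightarrow\mathcal{O}_X$, $1\mapsto 0$, so $\o Q=\underline{\mathbf{N}}$. Both logarithmic structures are fine and coherent, so the hypotheses of the sublemma hold, and the functor becomes $T\mapsto\{\,n\in\Gamma(T,\underline{\mathbf{N}}) : n|_{T\times_X U}=0\,\}$. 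Its value on $T=\Spec\mathcal{O}_{X,0}^{\rm sh}$ is $\{0\}$, because $T$ is connected and meets $U$, while its value on the closed geometric point is $\Hom(\mathbf{N},\mathbf{N})=\mathbf{N}$; since maps to an étale algebraic space over $X$ from a strictly henselian local $X$-scheme biject with maps from its closed point, no étale algebraic space over $X$ represents this functor. So no refinement of the finite-limit argument can close the gap. You should also be aware that the same example puts pressure on the paper's own one-line proof, which rests on the assertion that constructibility forces $f^\ast\Hom_{\et(X)}(\o P,\o Q)\rightarrow\Hom_{\et(X')}(f^\ast\o P,f^\ast\o Q)$ to be an isomorphism: here the left-hand side has stalk $0$ at the origin while the right-hand side is $\mathbf{N}$. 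What would make either argument work is local constancy of $\o P$ (then the congruence is étale-locally generated by constant sections and your finite-limit construction succeeds, and the base-change claim holds); without it, representability has to be recovered using more than the characteristic sheaves alone, for instance the compatibility with the maps $\exp$ to $\mathcal{O}_X$, which is what saves the functor of morphisms of logarithmic structures at the cost of the representing space no longer being étale.
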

\begin{proof}
Because $\o P$ and $\o Q$ are constructible, the natural map
\begin{equation*}
f^\ast \Hom_{\et(X)}(\o P, \o Q) \rightarrow \Hom_{\et(X')}(f^\ast \o P, f^\ast \o Q)
\end{equation*}
is an isomorphism for any morphism $f : X' \rightarrow X$.  Therefore we may represent $\Hom_{\Sch/X}(\o P, \o Q)$ with the espace \'etal\'e of $\Hom_{\et(X)}(\o P, \o Q)$.
\end{proof}

The relative algebraicity of~\eqref{eqn:12} is equivalent to the following lemma:

\begin{sublemma}[{\cite[Lemma~2.12]{GS}}] \label{lem:log-hom-fixed-char}
Let $Q$ be a logarithmic structure on a scheme $X$ and let $P$ be a coherent quasi-logarithmic structure on $X$.  Fix a morphism $\o u : \o P \rightarrow \o Q$.  The lifts of $\o u$ to a morphism of quasi-logarithmic structures $u : P \rightarrow Q$ are parameterized by a relatively affine scheme over $X$.
\end{sublemma}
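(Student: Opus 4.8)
The plan is to reduce the statement to an explicit local computation, exhibit the lifts as the solutions of a system of monomial and multiplicative equations in invertible functions, and recognize the resulting functor as a closed subscheme of a relative torus. A morphism of quasi-logarithmic structures is determined by local data and glues, and the condition of inducing $\o u$ on characteristics is étale-local, so the functor sending an $X$-scheme $f : T \to X$ to the set of lifts of $f^\ast \o u$ to a morphism of quasi-logarithmic structures $f^\ast P \to f^\ast Q$ is a sheaf on the small étale site of $X$. Since relatively affine morphisms satisfy étale descent, it suffices to produce the representing object étale-locally on $X$ and then glue.

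First I would pass to an étale neighborhood on which $\o P$ is the constant sheaf associated to a fine monoid, choose finitely many sections $p_1, \dots, p_n$ of $P$ whose images $\o p_j$ generate $\o P$, and choose a finite generating set of relations; by coherence (finite presentation of fine monoids) these take, in $P$, the form $\prod_j p_j^{m_{kj}} = c_k \prod_j p_j^{n_{kj}}$ with $c_k \in \mathcal{O}_X^\ast$. Shrinking further, I would lift each $\o u(\o p_j)$ to a section $q_j$ of $Q$. Because $\o u$ is a homomorphism, $\prod_j q_j^{m_{kj}}$ and $\prod_j q_j^{n_{kj}}$ have the same image in $\o Q$, hence differ by a unit $d_k \in \mathcal{O}_X^\ast$. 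Writing $a_j = \exp(p_j)$ and $b_j = \exp(q_j)$, the local situation is completely recorded by the data $(a_j, b_j, c_k, d_k)$.

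Since $P$ is generated over $\mathcal{O}_X^\ast$ by the $p_j$, any lift $u$ is determined by the images $u(p_j)$; as these lift $\o u(\o p_j)$ just as the $q_j$ do, I may write $u(p_j) = \lambda_j q_j$ with $\lambda_j \in \mathcal{O}_X^\ast$. The requirement that $u$ commute with the structural maps to $\mathcal{O}_X$ becomes $\lambda_j b_j = a_j$ for each $j$, while the requirement that $u$ respect the defining relations becomes $\prod_j (\lambda_j q_j)^{m_{kj}} = c_k \prod_j (\lambda_j q_j)^{n_{kj}}$. Here is the crux: the integrality of $Q$ — which holds because its characteristic $\o Q$ is integral — lets me cancel the common factor $\prod_j q_j^{n_{kj}}$, reducing the relation constraints to the multiplicative equations $\prod_j \lambda_j^{m_{kj} - n_{kj}} = c_k d_k^{-1}$. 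Conversely, any tuple $(\lambda_j)$ satisfying the equations $\lambda_j b_j = a_j$ and $\prod_j \lambda_j^{m_{kj}-n_{kj}} = c_k d_k^{-1}$ extends uniquely to a morphism of quasi-logarithmic structures lifting $\o u$, and the correspondence is compatible with base change $T \to X$, so it describes the functor of lifts.

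Consequently the functor is represented, over this neighborhood, by the closed subscheme of the relative torus $\mathbb{G}_m^n$ over $X$ cut out by the equations $\lambda_j b_j = a_j$ and $\prod_j \lambda_j^{m_{kj}-n_{kj}} = c_k d_k^{-1}$; being a closed subscheme of something affine over $X$, it is relatively affine over $X$. Gluing these local solutions via the sheaf property and invoking descent for affine morphisms yields a relatively affine scheme over $X$ representing the lifts, as claimed. I expect the only genuine subtlety to be the reduction of the relation constraints to clean Laurent-monomial equations, which hinges on the cancellativity furnished by integrality; once integrality is in hand, both families of constraints are manifestly closed conditions on the torus and the relative affineness is formal.
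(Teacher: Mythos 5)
Your proof breaks at its very first reduction: you claim one can pass to an \'etale neighborhood on which $\o P$ is a constant sheaf. This is false in general. The characteristic sheaf of a coherent (quasi-)logarithmic structure is constructible but almost never locally constant: its stalks jump on closed subsets. For example, if $P$ is the divisorial logarithmic structure of the origin on $X = \mathbf{A}^1$, then $\o P$ has stalk $\mathbf{N}$ at geometric points over the origin and stalk $0$ everywhere else, so no \'etale neighborhood of the origin makes $\o P$ constant. This is not a cosmetic problem, because the rest of your argument depends on a single finite presentation of $\o P$ valid on the whole neighborhood. When $\o P$ is non-constant, the relations among the $\o p_j$ vary from stalk to stalk (in the example, away from the origin there is the extra relation $\o p_1 = 0$, i.e.\ $p_1$ becomes a unit in $P$), and your ``converse'' step --- that any tuple $(\lambda_j)$ satisfying the finitely many imposed equations extends to a morphism $P \to Q$ --- is exactly what fails to be justified: a map prescribed on the $p_j$ must also respect all the stalk-dependent relations you never imposed.

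The missing idea, and the reason the lemma is stated for \emph{quasi}-logarithmic structures at all, is the paper's first move: do not shrink $X$ hoping $\o P$ becomes constant, but replace $P$ itself. By coherence, the associated logarithmic structure $P^a$ admits a chart $\Lambda \to P^a$ \'etale locally; set $P_0 = \mathcal{O}_X^\ast \oplus \Lambda_X$ with structural map $(v,\lambda) \mapsto v\exp(\lambda)$. This is a quasi-logarithmic structure whose characteristic $\o P_0 = \Lambda_X$ genuinely is constant --- indeed $P_0$ has the \emph{same} monoid presentation over every stalk, so the hidden-relation problem disappears --- and it has the same associated logarithmic structure as $P$; since $Q$ is an honest logarithmic structure, the universal property of the associated logarithmic structure gives $\Hom(P,Q) = \Hom(P^a,Q) = \Hom(P_0,Q)$, so the moduli problem is unchanged (with $\o u$ transported along $\o P_0 \to \o Q$). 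Once this replacement is made, your coordinate computation is correct and is, in substance, the paper's own proof: your Laurent-monomial equations $\prod_j \lambda_j^{m_{kj}-n_{kj}} = c_k d_k^{-1}$ cut out the paper's torsor $H$ under $\Hom_{\Sch/X}(\o P, \Gm)$ of lifts ignoring the structure maps to $\mathcal{O}_X$, and your equations $\lambda_j b_j = a_j$ are the paper's closed condition that the two $\Gm$-equivariant maps $P \to \mathbf{A}^1$ agree. (Incidentally, the cancellation you call the crux needs only that two unit multiples of the same section of $Q$ coincide exactly when the units do --- freeness of the $\mathcal{O}_X^\ast$-action, which is part of the extension structure --- rather than full integrality of $Q$.)
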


Our proof of this lemma only differs from that of loc.\ cit.\ superficially, but is nevertheless included for the sake of completeness.  It is also possible to deduce Lemma~\ref{lem:log-hom-fixed-char} from Lemma~\ref{lem:log-char-hom} and \cite[Corollary~3.4]{Olsson-Log}.

\begin{proof}
This is a local question in $X$, so we may freely pass to an \'etale cover.  Furthermore, replacing $P$ with a quasi-logarithmic structure $P_0$ that has the same associated logarithmic structure does not change the morphisms to $Q$, by the universal property of the associated logarithmic structure.  Since the logarithmic structure associated to $P$ admits a chart \'etale locally, we can therefore select $P_0$ to be a quasi-logarithmic structure whose sheaf of characteristic monoids $\o P_0$ is constant.  Replacing $P$ with $P_0$, we can assume that the characteristic monoid of $P$ is constant.

We wish to construct the space of completions of the diagram below (in which $u$ is also required to be compatible with the maps $\exp : P \rightarrow \mathcal{O}_X$ and $\exp : Q \rightarrow \mathcal{O}_X$):
\begin{equation*} \xymatrix{
P \ar@{-->}[r]^u \ar[d] & Q \ar[d] \\
\o P \ar[r]^{\o u} & \o Q
} \end{equation*}
Replacing $Q$ with $\o{u}^{-1} Q$, we can assume that $\o P = \o Q$ and $\o u = \id_{\o P}$.%
\footnote{At this point a morphism $P \rightarrow Q$ covering $\o u$ must be an isomorphism so we could complete the proof using~\cite[Corollary~3.4]{Olsson-Log}.}

Let $H$ be the moduli space of maps $u : P \rightarrow Q$ that are compatible with $\o u = \id_{\o P}$, ignoring the maps to $\mathcal{O}_X$.  Locally such a map exists because $P$ and $Q$ are both extensions of $\o P$ by $\Gm$ and $\o P$ is generated by a finite collection of global sections.  Indeed, this implies that $P$ and $Q$ are each determined by a finite collection of $\Gm$-torsors on $X$, all of which can be trivialized after passage to a suitable open cover of $X$.  It follows that $H$ is a torsor on $X$ under $\Hom_{\Sch / X}(\o P, \Gm)$ and in particular is representable by an affine scheme over $X$.

We may now work relative to $H$ and assume that the map $u : P \rightarrow Q$ has already been specified.  We argue that the locus where the diagram
\begin{equation*} \xymatrix{
P \ar[dr]^\exp \ar[d] \\
Q \ar[r]_<>(0.5)\exp & \mathcal{O}_X
} \end{equation*}
commutes is closed.  In effect, we are looking at the locus where two $\Gm$-equivariant maps $P \rightarrow \mathbf{A}^1$ agree.  But $P$ is generated as a monoid with $\Gm$-action by a finite collection of sections, hence the agreement of the two maps $P \rightarrow \mathbf{A}^1$ corresponds to the agreement of a finite collection of pairs of sections of $\mathbf{A}^1$.  But $\mathbf{A}^1$ is separated, so this is representable by a closed subscheme.
\end{proof}

This completes the proof of Theorem~\ref{thm:log-hom}.
\end{proof}

We can now obtain a global variant:

\begin{corollary}
Let $X$ be a proper, flat, finite presentation algebraic space over $S$ and let $P$ and $Q$ be logarithmic structures on $X$ with $P$ coherent.  Then $\Hom_{\Sch / S}(P,Q)$ is representable by an algebraic space over $S$.
\end{corollary}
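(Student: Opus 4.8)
The plan is to realize $\Hom_{\Sch/S}(P,Q)$ as a Weil restriction of the local object furnished by Theorem~\ref{thm:log-hom} along $\pi$. Set $H = \Hom_{\Sch/X}(P,Q)$. By Theorem~\ref{thm:log-hom} (valid over the algebraic space $X$ after passing to an \'etale chart and descending, since being an algebraic space is \'etale-local on the base), $H$ is representable by an algebraic space over $X$; inspecting its construction -- an \'etale space $\Hom_{\Sch/X}(\overline P, \overline Q)$ followed by a relatively affine map -- shows that $H \to X$ is separated and locally of finite presentation. The coherence of $Q$ is inessential here: because $P$ is coherent, any morphism $P \to Q$ factors through the coherent sub-logarithmic structure of $Q$ generated by the image of $P$, so $H$ is the increasing union over such sub-structures of the representable pieces coming from coherent $Q$, and these inclusions induce open immersions. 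By construction the formation of $H$ commutes with base change in $X$.

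Now let $S'$ be an $S$-scheme and write $X' = X \fpr_S S'$. Since $H$ represents $\Hom_{\Sch/X}(P,Q)$ and its formation commutes with base change, $H \fpr_X X'$ represents $\Hom_{\Sch/X'}(P',Q')$, where $P'$ and $Q'$ are the pullbacks of $P$ and $Q$ to $X'$. A morphism $P' \to Q'$ is then precisely a section of $H \fpr_X X' \to X'$. Writing $\mathrm{Res}_{X/S}$ for the functor of sections (Weil restriction) along $\pi$, we therefore obtain $\Hom_{\Sch/S}(P,Q) = \mathrm{Res}_{X/S}(H) = \pi_\ast H$, the functor whose $S'$-points are the sections of $H \fpr_X X'$ over $X'$.

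It remains to invoke the representability of this Weil restriction. Because $\pi$ is proper, flat, and of finite presentation, and $H \to X$ is separated and locally of finite presentation, the functor of sections $\pi_\ast H$ is representable by an algebraic space over $S$ (e.g.\ \cite{HR-Hom}); equivalently, one may realize it as the subfunctor of $\Hom_{\Sch/S}(X,H)$ cut out by the condition that the composite $X \to H \to X$ equal the identity, which is a closed condition since $X$ is proper, hence separated, over $S$. I expect this globalization to be the only genuine obstacle, and it is exactly here that the properness, flatness, and finite presentation of $\pi$ enter: they furnish the cohomological finiteness and flatness that bound the functor of sections and keep its deformation theory well behaved. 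Should one prefer not to cite the general statement, one would instead verify Artin's criteria for $\pi_\ast H$ directly, with the properness of $\pi$ supplying the coherence and finiteness of the relevant tangent and obstruction modules.
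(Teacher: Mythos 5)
Your proposal is essentially the paper's own proof: the paper likewise identifies $\Hom_{\Sch/S}(P,Q)$ with the pushforward (Weil restriction) $\pi_\ast \Hom_{\Sch/X}(P,Q)$, invokes Theorem~\ref{thm:log-hom} for representability of the relative Hom space over $X$, and then cites \cite[Theorem~1.2]{HR-Hom} for the algebraicity of $\pi_\ast$. Your aside on dropping the coherence of $Q$ is a reasonable patch for a hypothesis mismatch that the paper itself glosses over (Theorem~\ref{thm:log-hom} is stated for $P$ and $Q$ both coherent), but it does not change the route.
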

\begin{proof}
Let $\pi : X \rightarrow S$ be the projection.  Then we have
\begin{equation*}
\Hom_{\Sch / S}(P,Q) = \pi_\ast \Hom_{\Sch / X}(P,Q) .
\end{equation*}
We have already seen in Theorem~\ref{thm:log-hom} that $\Hom_{\Sch / X}(P,Q)$ is representable by an algebraic space over $X$ that is quasi-compact, quasi-separated, and locally of finite presentation.  We may therefore apply~\cite[Theorem~1.2]{HR-Hom} (or any of a number of other representability results for schemes of morphisms) to deduce the algebraicity of $\pi_\ast \Hom_{\Sch / X}(P,Q)$.
\end{proof}

\begin{corollary} \label{cor:log-tri-fixed-char}
Let $X$ and $S$ be as in the last corollary.  Suppose that $P$, $Q$, and $R$ are logarithmic structures on $X$ with $P$ and $Q$ coherent and morphisms $\alpha : P \rightarrow Q$ and $\beta : P \rightarrow R$ have been specified.  Then there is an algebraic space over $S$ parameterizing the commutative triangles shown below:
\begin{equation*} \xymatrix{
P \ar[d]_\alpha \ar[dr]^\beta \\
Q \ar[r] & R
} \end{equation*}
\end{corollary}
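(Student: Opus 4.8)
The plan is to exhibit the functor of commutative triangles as a fiber product of two of the $\Hom$-spaces furnished by the previous corollary. First I would unwind the definition: a commutative triangle as displayed is the same datum as a single morphism of logarithmic structures $\gamma : Q \to R$ subject to the relation $\gamma \circ \alpha = \beta$. Thus, over a test scheme $T \to S$, the functor in question assigns the set of morphisms $\gamma : Q_T \to R_T$ on $X_T = X \fpr_S T$ satisfying $\gamma \circ \alpha_T = \beta_T$, where the subscripts denote pullback to $X_T$.

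Since $P$ and $Q$ are coherent, the previous corollary applies with target $R$ (which need not be coherent) to show that both $\Hom_{\Sch/S}(Q,R)$ and $\Hom_{\Sch/S}(P,R)$ are representable by algebraic spaces over $S$. Precomposition with $\alpha$ defines a morphism of functors
\begin{equation*}
\alpha^\ast : \Hom_{\Sch/S}(Q,R) \rightarrow \Hom_{\Sch/S}(P,R), \qquad \gamma \mapsto \gamma \circ \alpha ,
\end{equation*}
which is well defined because pullback of logarithmic structures commutes with composition; by Yoneda it corresponds to a morphism of the representing algebraic spaces. Meanwhile $\beta$ itself determines a section $S \rightarrow \Hom_{\Sch/S}(P,R)$, sending each $T \to S$ to the pullback $\beta_T$.

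With these two maps in hand, the functor of triangles is precisely the fiber product
\begin{equation*}
\Hom_{\Sch/S}(Q,R) \mathbin{\times}_{\Hom_{\Sch/S}(P,R)} S
\end{equation*}
formed along $\alpha^\ast$ and the section $\beta$: by construction its $T$-points are exactly the morphisms $\gamma$ with $\alpha^\ast(\gamma) = \beta_T$, which is the defining condition above. As the category of algebraic spaces is closed under fiber products, this is an algebraic space over $S$, which is the assertion.

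I expect no serious obstacle here; the substance is entirely contained in the previous corollary. The only points that require care are the verification that the displayed triangle functor genuinely coincides with the fiber product above, and the observation that the coherence hypotheses needed to invoke the previous corollary --- coherence of the \emph{sources} $P$ and $Q$, but not of the target $R$ --- are exactly those assumed in the statement.
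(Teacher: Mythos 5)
Your proposal is correct and is essentially identical to the paper's own proof: the paper likewise recognizes the triangle functor as the fiber product $\Hom_{\Sch/S}(Q,R) \fpr_{\Hom_{\Sch/S}(P,R)} \{\beta\}$ along precomposition by $\alpha$ and the point given by $\beta$. Your additional remarks --- that only coherence of the sources $P$ and $Q$ is needed, not of $R$ --- are accurate and simply make explicit what the paper leaves implicit.
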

\begin{proof}
We recognize this functor as a fiber product:
\begin{equation*}
\Hom_{\Sch / S}(Q,R) \fpr_{\Hom_{\Sch/S}(P,R)} \{ \beta \}
\end{equation*}
\end{proof}

Proposition~\ref{prop:log-rep} is an immediate consequence of Corollary~\ref{cor:log-tri-fixed-char}, applied with $P = \pi^\ast M_S$, $Q = M$, and $R = M_X$.

\section{Local minimality}
\label{sec:local-case}

After Proposition~\ref{prop:log-rep}, all that is left to demonstrate Theorem~\ref{thm:main} is to verify Gillam's criteria for
\begin{equation*}
\Log(\Hom_{\LogSch/S}(M,M_X)) = \Hom_{\Sch/\Log(S)}(M,M_X).
\end{equation*}
As in the proof of Proposition~\ref{prop:log-rep} we separate this problem into local and global variants, the local version being the case $\u S = \u X$.  We treat the local problem in this section and deduce the solution to the global problem in the next one.

Let $\u X$ be a scheme equipped with three fine logarithmic structures, denoted $\pi^\ast M_S$, $M_X$, and $M$ in order to emphasize the application in the next section, and morphisms of logarithmic structures,
\begin{gather*}
\pi^\ast M_S \rightarrow M_X \\
\pi^\ast M_S \rightarrow M  .
\end{gather*}
Let $\GS^{\rm loc}(\u X)$ be the set of commutative diagrams
\begin{equation*} \xymatrix{
\pi^\ast M_S \ar[r] \ar[d] \ar@/^15pt/[rr] & N \ar[d] & M \ar[dl]^{\varphi} \\
M_X \ar[r] & N_X
} \end{equation*}
in which $N$ is a \emph{quasi-logarithmic structure}%
\footnote{The use of quasi-logarithmic structures here is entirely for convenience:  it allows us to avoid repeated passage to associated logarithmic structures.  The reader who would prefer not to worry about quasi-logarithmic structures should feel free to assume $N$ is a logarithmic structure and worry instead about remembering to take associated logarithmic structures at the right moments.}
(Definition~\ref{def:quasi-log}) and the square on the left is cocartesian.  These data are determined up to unique isomorphism by the quasi-logarithmic structure $N$, the morphism $\pi^\ast M_S \rightarrow N$, and the morphism $\varphi$.  We will refer to an object of $\GS^{\rm loc}(\u X)$ with the pair $(N, \varphi)$ with the morphism $\pi^\ast M_S \rightarrow N$ specified tacitly.%
\footnote{Effectively, $N$ is an object of the category of quasi-logarithmic structures equipped with a morphism from $\pi^\ast M_S$.}

\begin{convention} \label{conv:pushout}
As a matter of notation, whenever we have a morphism of monoids $\pi^\ast M_S \rightarrow N$ (resp.\ $\pi^\ast \o M_S \rightarrow \o N$), we write $N_X$ (resp. $\o N_X$) for the monoid obtained by pushout:
\begin{equation*} \vcenter{\xymatrix{
\pi^\ast M_S \ar[r] \ar[d] & M_X \ar[d] \\
N \ar[r] & N_X
}} \qquad \qquad \text{(resp.\ } \vcenter{\xymatrix{
\pi^\ast \o M_S \ar[r] \ar[d] & \o M_X \ar[d] \\
\o N \ar[r] & \o N_X
}} \text{)} \end{equation*}
When $N = N_S$ above, we simply write $N_X$ rather than $(N_S)_X$.
\end{convention}

The object of this section will be to prove the following two lemmas:

\begin{lemma} \label{lem:local-existence}
For any $\u X$-scheme $\u Y$, any object of $\GS^{\rm loc}(\u Y)$ admits a morphism from a minimal object.
\end{lemma}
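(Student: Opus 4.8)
The plan is to strip the problem down to a construction at the level of characteristic monoids, to carry out that construction by a monoid-theoretic colimit controlled by the homological algebra of commutative monoids, and finally to reinflate the answer to a quasi-logarithmic structure. Throughout I write $\overline{N}$ for the characteristic of $N$ and use the pushout notation of Convention~\ref{conv:pushout}.

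First I would observe that the problem is essentially insensitive to the $\mathcal{O}_{\u Y}^\ast$-extension data. Given an object $(N',\varphi')$ of $\GS^{\mathrm{loc}}(\u Y)$, I claim its minimal object has characteristic a \emph{submonoid} $\overline{N}^{\min}\subseteq\overline{N'}$, and that once such a submonoid is produced the rest is formal. Indeed, for any submonoid $\overline{N}\subseteq\overline{N'}$ containing the image of $\pi^\ast\overline{M_S}$, the pullback extension $N:=N'\times_{\overline{N'}}\overline{N}$, equipped with the restriction to it of the structural map $N'\to\mathcal{O}_{\u Y}$, is a quasi-logarithmic structure (Definition~\ref{def:quasi-log}); the map $\pi^\ast M_S\to N$ is recovered because the given map to $N'$ has characteristic landing in $\overline{N}$, and the compatibility of $\varphi$ is automatic because every extension in sight is pulled back from that of $N'$. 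The inclusion then furnishes the required morphism $(N,\varphi)\to(N',\varphi')$. So the entire content is combinatorial: I must produce, intrinsically in $(N',\varphi')$, the smallest such $\overline{N}$ through which $\overline{\varphi'}$ factors.

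To build $\overline{N}^{\min}$ I would work étale-locally on $\u Y$, where coherence permits charts making $\pi^\ast\overline{M_S}$, $\overline{M_X}$, $\overline{M}$, and $\overline{N'}$ constant fine monoids. I define $\overline{N}^{\min}$ to be the submonoid of $\overline{N'}$ generated by the image of $\pi^\ast\overline{M_S}$ together with the ``$\overline{N'}$-components'' of $\overline{\varphi'}(\overline{M})$: for each section $m$ of $\overline{M}$ one writes $\overline{\varphi'}(m)$ as the class of a pair $(x_m,n_m)$ in the pushout $\overline{N'}_X=\overline{M_X}\oplus_{\pi^\ast\overline{M_S}}\overline{N'}$ and adjoins the elements $n_m$. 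This monoid is finitely generated, hence fine, because $\overline{M}$ is; the inclusion is a morphism to $(N',\varphi')$; and any subobject of $(N',\varphi')$ must already contain every $n_m$, so it contains $\overline{N}^{\min}$. Verifying Gillam's criterion from Section~\ref{sec:minimality} against these facts is what exhibits $\overline{N}^{\min}$ as minimal.

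The main obstacle is the non-uniqueness of the decomposition $\overline{\varphi'}(m)=\overline{(x_m,n_m)}$: the pair $(x_m,n_m)$ is well defined only modulo the diagonal image of $\pi^\ast\overline{M_S}^{\mathrm{gp}}$, and, worse, the pushout $\overline{M_X}\oplus_{\pi^\ast\overline{M_S}}\overline{N}^{\min}$ need not map injectively to $\overline{N'}_X$ as a morphism of monoids. To prove that $\overline{\varphi'}$ really does factor through an integral monoid $\overline{M_X}\oplus_{\pi^\ast\overline{M_S}}\overline{N}^{\min}$, I expect to resolve $\overline{M}$ over $\pi^\ast\overline{M_S}$ by generators and relations and to track those relations using the homological algebra for commutative monoids announced in the outline, thereby controlling the integralization of the pushout and the kernel of its comparison with $\overline{N'}_X$. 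Once the characteristic construction is secured, the independence of the elements $n_m$ from the chosen chart (modulo $\pi^\ast\overline{M_S}$) shows that the local submonoids glue to a subsheaf $\overline{N}^{\min}\subseteq\overline{N'}$ on $\u Y$, and the extension-lifting of the second paragraph upgrades it to the desired minimal object of $\GS^{\mathrm{loc}}(\u Y)$ mapping to $(N',\varphi')$.
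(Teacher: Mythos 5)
Your reduction in the first paragraph is where the proof breaks, and the error is fatal rather than technical: it is not true that the minimal object mapping to $(N',\varphi')$ can be realized with characteristic a submonoid $\o N^{\min} \subseteq \o N'$ and extension pulled back. Minimality in the sense of Section~\ref{sec:minimality} is an \emph{initiality} property: the minimal object must admit a (unique) morphism to \emph{every} object of $\GS^{\rm loc}(\u Y)$ lying over $(N',\varphi')$, and such objects can have characteristic monoids strictly larger than $\o N'$; an object carved out of $(N',\varphi')$ has no room to map to them. Your argument only checks universality against \emph{subobjects} of $(N',\varphi')$, which is not what Gillam's criterion demands. A concrete counterexample: take $\u Y = \u X = \u S = \Spec k$, let $M_S$ and $M_X$ be the trivial logarithmic structure (so $\o M_S = \o M_X = 0$, every pushout $N_X$ equals $N$, and the type is trivial), and let $M$ be the split logarithmic structure with $\o M = \mathbf{N}^2$. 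An object of $\GS^{\rm loc}(\u X)$ is then just a quasi-logarithmic structure $N$ with a morphism $\varphi : M \rightarrow N$. Let $\o N' = \mathbf{N}$ and let $\varphi'$ be induced by the addition map $\mathbf{N}^2 \rightarrow \mathbf{N}$ (identity on units); this is a legitimate morphism of logarithmic structures, compatible with $\exp$. Your recipe returns $\o N^{\min} = \o\varphi'(\o M) = \mathbf{N} = \o N'$, i.e.\ the object $(N',\varphi')$ itself. But $(N',\varphi')$ is not minimal: the tautological object $(M, \id_M)$ maps to $(N',\varphi')$ via $\varphi'$, while a factorization $(N',\varphi') \rightarrow (M,\id_M)$ would be a morphism $g : N' \rightarrow M$ with $g \circ \varphi' = \id_M$, which cannot exist because $\varphi'$ is not injective. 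The actual minimal object here is $(M,\id_M)$, whose characteristic $\mathbf{N}^2$ maps \emph{onto} $\o N' = \mathbf{N}$ by the non-injective addition map; it is in no sense a submonoid of $\o N'$.

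This is precisely what the paper's construction (and Remark~\ref{rem:hint}) is designed to handle: the characteristic of the minimal object is built by a limit-type construction intrinsic to the type $u$ and independent of $(N',\varphi')$ --- the fiber product $\o R_0^{\rm gp} = \o M^{\rm gp} \fpr_{\o M_{X/S}^{\rm gp}} \o M_X^{\rm gp}$ modulo a diagonal copy of $\pi^\ast \o M_S^{\rm gp}$, followed by extraction of the smallest admissible submonoid $\o R \subset \o R^{\rm gp}$, which is where integrality of $\pi^\ast \o M_S \rightarrow \o M_X$ and the least-element argument via Lemma~\ref{lem:integral-pushout} enter. Only the $\mathcal{O}^\ast$-extension is then pulled back from $N'$, along the comparison map $\o R \rightarrow \o N'$ furnished by Lemma~\ref{lem:initial-characteristic}. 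That comparison map points from the minimal characteristic \emph{into} $\o N'$ and is typically non-injective, so the containment your proposal is founded on goes the wrong way. The later steps of your proposal (choosing the components $n_m$, controlling the pushout by unspecified homological algebra, gluing) all take place inside $\o N'$ and therefore inherit this defect; the proof cannot be repaired without replacing the ambient object by something like the paper's $\o R^{\rm gp}$.
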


\begin{lemma} \label{lem:local-pullback}
The pullback of a minimal object of $\GS^{\rm loc}(\u Y)$ via any morphism $\u Y' \rightarrow \u Y$ is also minimal.
\end{lemma}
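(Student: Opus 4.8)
The plan is to reduce minimality to a condition on the sheaves of \emph{characteristic} monoids that can be checked stalkwise, and then to observe that the formation of characteristic monoids, of the pushout $N \mapsto N_X$, and of the map $\ovarphi$ all commute with pullback. Since the inverse image of an \'etale sheaf preserves stalks, minimality will then transfer automatically from $\u Y$ to $\u Y'$.

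First I would extract from the construction of minimal objects (the proof of Lemma~\ref{lem:local-existence}) the precise form that minimality takes. The extension of $\o N$ by $\mathcal{O}^\ast$ underlying a quasi-logarithmic structure plays no role here: an object $(N,\varphi) \in \GS^{\rm loc}(\u Y)$ is minimal if and only if its \emph{characteristic datum} --- the sheaf of monoids $\o N$ together with the map $\pi^\ast \o M_S \to \o N$ and the map $\ovarphi : \o M \to \o N_X$ --- is minimal, and this condition is \'etale-local on $\u Y$ and may be verified on stalks at geometric points. Concretely, at a geometric point $\bar y$ the stalk $\o N_{\bar y}$ should be the initial fine monoid under $(\pi^\ast \o M_S)_{\bar y}$ admitting a compatible map $\o M_{\bar y} \to (\o N_X)_{\bar y}$ into the pushout, with $\ovarphi$ realizing the universal such map. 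This is a universal property in a category of fine monoids and so depends only on the stalks of $\pi^\ast \o M_S$, $\o M_X$, and $\o M$ and the maps among them.

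Next I would record how these constructions interact with a morphism $g : \u Y' \rightarrow \u Y$. Pullback of a quasi-logarithmic structure (Definition~\ref{def:quasi-log}) preserves the characteristic monoid, so $\o{g^\ast N} \cong g^{-1} \o N$; and since pullback commutes with the pushout of Convention~\ref{conv:pushout}, we obtain $(g^\ast N)_X \cong g^\ast(N_X)$ and $\o{(g^\ast N)_X} \cong g^{-1} \o N_X$, under which $g^\ast \ovarphi$ is identified with $g^{-1} \ovarphi$. Hence for a geometric point $\bar y'$ of $\u Y'$ with image $\bar y = g(\bar y')$, these identifications exhibit the stalk at $\bar y'$ of the characteristic datum of $g^\ast(N,\varphi)$ as canonically the stalk at $\bar y$ of the characteristic datum of $(N,\varphi)$, compatibly with the generization maps.

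Finally I would conclude. Because $(N,\varphi)$ is minimal, its characteristic datum is minimal at every geometric point of $\u Y$, in particular at every point of the form $g(\bar y')$; by the stalk identifications above, the characteristic datum of $g^\ast(N,\varphi)$ is then minimal at every geometric point $\bar y'$ of $\u Y'$, so $g^\ast(N,\varphi)$ is minimal. I expect the only real work to lie in the first step: isolating minimality as a stalk-local universal property of the characteristic monoids and confirming that the $\mathcal{O}^\ast$-extension data of the quasi-logarithmic structure neither contributes to nor obstructs it. Once that characterization is fixed, the stability of minimality under pullback is formal, resting entirely on the fact that the inverse image of characteristic monoids preserves stalks and commutes with the pushout $N \mapsto N_X$ and with $\ovarphi$.
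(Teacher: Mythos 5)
Your overall architecture --- reduce minimality to a statement about characteristic monoids, then use that characteristic monoids, the pushout $N \mapsto N_X$, and $\ovarphi$ all behave well under inverse image --- is in the right spirit; the paper's own proof is likewise a compatibility-with-pullback check. But the keystone of your plan, Step 1, is asserted rather than proved, and the justification you offer for it (``this is a universal property in a category of fine monoids and so depends only on the stalks'') is not a valid principle. Minimality quantifies over test objects $(N',\varphi') \rightarrow (N,\varphi)$ that are \emph{sheaves} on all of $\u Y$, whereas your stalkwise condition quantifies over monoid-level data at a geometric point. The implication you need on $\u Y$ --- minimal $\Rightarrow$ every stalk is initial of its type --- cannot be extracted formally: an arbitrary stalk-level test datum need not extend to a global object of $\GS^{\rm loc}(\u Y)$, and the natural repair (spread the datum out to an \'etale neighborhood $U$ and test there) presupposes that minimality is preserved by restriction along $U \rightarrow \u Y$, which is an instance of the very lemma you are proving. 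The only available route is through the explicit construction of Section~\ref{sec:local}: identify an abstract minimal object with the constructed $(R,\rho)$ (via Lemma~\ref{lem:local-existence} and uniqueness of minimal objects), then check that each step of that construction --- the fiber product and quotient defining $\o R^{\rm gp}$, the submonoid $\o R$ generated by the selected local sections, the pullback of the extension defining $R$, and the universal properties defining $\rho$ --- commutes with passage to stalks. That verification is exactly the content of the paper's proof (which checks it for arbitrary inverse images, of which geometric points are a special case), so your Step 1 does not bypass that work; it contains it.

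Two further points are left unaddressed. First, the converse direction (stalkwise initial $\Rightarrow$ minimal), which you need in order to conclude on $\u Y'$, requires gluing the stalkwise-unique maps $\o Q_{\bar y'} \rightarrow \o N'_{\bar y'}$ into a map of sheaves; for this you need the characteristic monoid to be coherent as a \emph{sheaf}, so that maps out of it extend from stalks to \'etale neighborhoods and stalkwise uniqueness forces agreement on overlaps --- stalkwise fineness alone does not suffice, and coherence of minimal objects is again something you only know from the construction. Second, your claim that the $\mathcal{O}^\ast$-extension ``plays no role'' rests on the fact that every morphism of quasi-logarithmic structures is cartesian over its map of characteristic monoids, so that characteristic-level maps lift uniquely; this is true, and it is what the paper invokes when it says $N'$ is pulled back from $N$, but it is a lemma to be stated, not a tautology. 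In short: your plan can be completed, but completing it means reproducing the paper's compatibility argument and then adding gluing steps on top of it; as written, the proof has a gap exactly where you located ``the only real work.''
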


The construction of the minimal object appearing in Lemma~\ref{lem:local-existence} is done in Section~\ref{sec:local}, while the proof of its minimality appears in Section~\ref{sec:Gillam}, along with the proof of Lemma~\ref{lem:local-pullback}.

\subsection{Construction of minimal objects}
\label{sec:local}

Fixing $(N,\varphi) \in \GS^{\rm loc}(\u X)$, we construct an object $(R, \rho) \in \GS^{\rm loc}(\u X)$ and a morphism $(R,\rho) \rightarrow (N,\varphi)$.  In Section~\ref{sec:Gillam} we verify that $(R,\rho)$ is minimal and that its construction is stable under pullback.

We assemble $R$ in steps:  First we build the associated group of its characteristic monoid, then we identify its characteristic monoid within this group, and finally we build the quasi-logarithmic structure above the characteristic monoid.  

Recall that the map $\varphi : M \rightarrow N_X$ induces a map $u : M \rightarrow N_X / \pi^\ast N_S \simeq M_X / \pi^\ast M_S$ known as the \emph{type} of $u$.  This generalizes \cite[Definition~1.10]{GS}.  For brevity, we write $\o M_{X/S} = M_X / \pi^\ast M_S = \o M_X / \pi^\ast \o M_S$ below.  Note that $u$ is equivariant with respect to the action of $\pi^\ast M_S$ on $M$ and the (trivial) action of $\pi^\ast M_S$ on the relative characteristic monoid $\o M_{X/S}$.  Therefore $u$ may equally well be considered a morphism $\o M / \pi^\ast \o M_S \rightarrow \o M_{X/S}$.

\begin{remark} \label{rem:hint}
The following construction is technical, so the reader may find it helpful to keep in mind that it is really an elaboration of an exercise in homological algebra:
\begin{quote}
\textit{If $0 \rightarrow A \rightarrow B \rightarrow C \rightarrow 0$ is an exact sequence of abelian groups, and $u : M \rightarrow C$ is a given homomorphism, there is a universal homomorphism $A \rightarrow A'$ such that $u$ lifts to a homomorphism $M \rightarrow B'$, where $B' = A' \mathbin{\amalg}_A B$:
\begin{equation*} \xymatrix{
& & & M \ar[d] \ar@{-->}[ddl] \\
0 \ar[r] & A \ar[r] \ar[d] & B \ar[r] \ar[d] & C \ar[r] \ar@{=}[d] & 0 \\
0 \ar[r] & A' \ar[r] & B' \ar[r] & C \ar[r] & 0
} \end{equation*}
Moreover, $A'$ may be taken to be $M \fpr_C B$.}
\end{quote}%
\end{remark}

\textsc{The associated group of the characteristic monoid of $R$.}  We set $\o R_0^{\rm gp} = \o M^{\rm gp} \fpr_{\o M_{X/S}^{\rm gp}} \o M_X^{\rm gp}$.  This fits into a commutative diagram with exact rows:
\begin{equation*} \xymatrix{
0 \ar[r] & \pi^\ast \o M_S^{\rm gp} \ar[r] \ar@{=}[d] & \o R_0^{\rm gp} \ar[r] \ar[d] & \o M^{\rm gp} \ar[d] \ar[r] & 0 \\
0 \ar[r] & \pi^\ast \o M_S^{\rm gp} \ar[r] & \o M_X^{\rm gp} \ar[r] & \o M_{X/S}^{\rm gp} \ar[r] & 0
} \end{equation*}
Observe that $\o R_0^{\rm gp}$ comes with \emph{two} maps $\pi^\ast \o M_S^{\rm gp} \rightarrow \o R^{\rm gp}$ corresponding to the two maps
\begin{gather*}
\pi^\ast \o M^{\rm gp}_S \rightarrow \o M^{\rm gp} \\
\pi^\ast \o M^{\rm gp}_S \rightarrow \o M_X^{\rm gp} .
\end{gather*}
We take $\epsilon : \o R_0^{\rm gp} \rightarrow \o R^{\rm gp}$ to be the quotient of $\o R_0^{\rm gp}$ by the diagonal copy of $\pi^\ast \o M_S^{\rm gp}$.  We may then define $\o R_X^{\rm gp}$ by pushout via $\pi^\ast \o M_S^{\rm gp} \rightarrow \o M_X^{\rm gp}$ (Convention~\ref{conv:pushout}).

\textsc{The homomorphism $\o\rho : \o M^{\rm gp} \rightarrow \o R_X^{\rm gp}$.}  Let $\o M_X^+$ be the pushout of $\o M_X$ by the homomorphism \emph{of monoids} $\pi^\ast \o M_S \rightarrow \pi^\ast \o M_S^{\rm gp}$.  This is a submonoid of $\o M_X^{\rm gp}$ and fits into the exact sequence in the middle row of the diagram below.  Let $\o R_0^+$ be the pullback of $\o M \rightarrow \o M_{X/S}$ to $\o M_X^+$ (the upper right square of the diagram below).  The diagram below is commutative except for the dashed arrows (which will be explained momentarily) and has exact rows:
\begin{equation*} \xymatrix{
0 \ar[r] & \pi^\ast \o M_S^{\rm gp} \ar[r] \ar[d] & \ar@{-->}[ddl]^(0.7){\epsilon} \o R_0^+ \ar[r] \ar[d]^{\beta} & \o M \ar[r] \ar[d] \ar@{-->}[ddl]^(0.7){\o\rho} & 0 \\
0 \ar[r] & \pi^\ast \o M_S^{\rm gp} \ar[r] \ar[d] & \o M_X^+ \ar[r] \ar[d] & \o M_{X/S} \ar[r] \ar@{=}[d] & 0 \\
0 \ar[r] & \o R^{\rm gp} \ar[r]_<>(0.5)\alpha & (\o R^{\rm gp})_X \ar[r] & \o M_{X/S}^{\rm gp} \ar[r] & 0
} \end{equation*}
Note that $(\o R^{\rm gp})_X$ is the pushout of $\pi^\ast \o M_S \rightarrow \o M_X$ via $\pi^\ast \o M_S \rightarrow \o R^{\rm gp}$ \emph{as a monoid}.  Equivalently, it is the pushout of $\pi^\ast \o M_S^{\rm gp} \rightarrow \o M_X^+$ via $\pi^\ast \o M_S^{\rm gp} \rightarrow \o R^{\rm gp}$, again as a monoid.  It is contained in but not necessarily equal to $\o R_X^{\rm gp} = (\o R_X)^{\rm gp}$.

The difference between the two compositions 
\begin{gather*}
\o R_0^+ \xrightarrow{\beta} \o M_X^+ \rightarrow (\o R^{\rm gp})_X \\
\o R_0^+ \xrightarrow{\epsilon} \o R^{\rm gp} \xrightarrow{\alpha} (\o R^{\rm gp})_X
\end{gather*}
factors uniquely through a map $\o M^{\rm gp} \rightarrow (\o R^{\rm gp})_X \subset \o R^{\rm gp}_X$.  We take this as the definition of $\o\rho$.

\begin{remark}
Observe that when the maps $\pi^\ast \o M_S^{\rm gp} \rightarrow \o R^{\rm gp}$ and $\pi^\ast \o M_S^{\rm gp} \rightarrow \o M^{\rm gp}$ are the canonical ones, the diagram on the left commutes but the diagram on the right does not:
\begin{equation*} \xymatrix{
\pi^\ast \o M^{\rm gp}_S  \ar[r] \ar[d] & \o M^{\rm gp} \ar[d]^{\o\rho} \\
\o R^{\rm gp} \ar[r]^\alpha & \o R^{\rm gp}_X
} \qquad \xymatrix{
\pi^\ast \o M^{\rm gp}_S  \ar[r] \ar[d] & \o M^{\rm gp} \ar[d]^{\o\rho} \\
\o R_0^{\rm gp} \ar[r] & (\o R_0^{\rm gp})_X
} \end{equation*}
This is the reason we introduced the quotient $\epsilon$ earlier.
\end{remark}

We view $(\o R^{\rm gp}, \o\rho)$ as the initial object of $\GS^{\rm loc}(\u X)$ \emph{on the level of associated groups of characteristic monoids}.  Justification for this attitude will be given in Section~\ref{sec:Gillam} (see the proof of Lemma~\ref{lem:initial-characteristic}).

\textsc{The characteristic monoid $\o R$.}  
We identify the smallest sheaf of submonoids $\o R \subset \o R^{\rm gp}$ that contains the image of $\pi^\ast \o M_S$ and whose pushout $\o R_X$ contains the image of $\o\rho : \o M \rightarrow \o R^{\rm gp}_X$.  For each local section $\xi$ of $\o M$ we will identify a local section (or possibly a finite collection of local sections) of $\o R^{\rm gp}$ for inclusion in $\o R$; we will then take $\o R$ to be the submonoid of $\o R^{\rm gp}$ generated by these local sections.  As $M$ is assumed to be coherent, a finite number of these local sections suffice to generate $\o R$, which guarantees that $\o R$ is coherent.

Suppose that $\xi \in \Gamma(U, \o M)$ is a section over some quasi-compact $U$ that is \'etale over $X$.  Recall that $\o\rho(\xi)$ lies in $(\o R^{\rm gp})_X$, which is the pushout of $\o R^{\rm gp}$ via the integral homomorphism $\pi^\ast \o M_S \rightarrow \o M_X$.  At least after passage to a finer quasi-compact \'etale cover, we can represent $\o\rho(\xi)$ as a pair $(a,b)$ where $a \in \o R^{\rm gp}$ and $b \in \o M_X$ (cf.\ Appendix~\ref{sec:integral}).

Let $B \subset \Gamma(U, \o M_X)$ be the collection of all $b \in \o M_X$ such that $\o\rho(\xi)$ can be represented as $(a,b)$ for some $a \in \Gamma(U, \o R^{\rm gp})$.  As $\o R^{\rm gp} \rightarrow \o R^{\rm gp}_X$ is injective (it is integral), there is at most one $a$ for any $b \in \Gamma(U, \o M_X)$.  Note that $B$ carries an action of $\Gamma(U, \pi^\ast \o M_S)$, for if $\o\rho(\xi)$ is representable by $(a,b)$ then it is also representable by $(a - c, b + c)$.  The action of the sharp monoid $\pi^\ast \o M_S$ gives $B$ a partial order by setting $b \leq b + c$ for all $c \in \Gamma(U, \pi^\ast \o M_S)$.  We will show $b$ has a least element with respect to this partial order.

Suppose $b$ and $b'$ are elements of $B$ with $(a,b)$ and $(a',b')$ both representing $\o\rho(\xi) \in \Gamma(U, \o R^{\rm gp}_X)$.  As $\pi^\ast \o M_S \rightarrow \o M_X$ is integral, Lemma~\ref{lem:integral-pushout} implies that there must be elements $d \in \o M_X$ and $c, c' \in \pi^\ast \o M_S$ with $a + c = a' + c'$ and $b = d + c$ and $b' = d + c'$.  But then $\o\rho(\xi)$ is also representable by $(a+c,d) = (a'+c',d)$.  Therefore, for any pair $b, b' \in B$ there is a $d \in B$ with $d \leq b$ and $d \leq b'$.

It will now follow that $B$ has a least element if we can show that every infinite decreasing chain of elements of $B$ stabilizes.  But $B$ is a subset of $\Gamma(U, \o M_X)$, and, at least provided $U$ has been chosen small enough, this is a strict submonoid of a finitely generated abelian group.  A strictly decreasing chain of elements must have strictly decreasing distance from the origin in $\Gamma(U, \o M_X) \tensor \mathbf{R}$, and there can be only a finite number of elements of $B$ whose image in $\Gamma(U, \o M_X) \tensor \mathbf{R}$ within a fixed distance of the origin.  Therefore the chain must stabilize and $B$ has a least element.

Writing $b$ for the least element of $B$ and $a$ for the corresponding element of $\Gamma(U,\o R^{\rm gp})$ such that $(a,b)$ represents $\o\rho(\xi)$, we include $a$ as an element of $\o R$.  As $\o M$ is coherent, we can repeat this construction for each element in a finite collection of sections that generate $\o M$ over $U$ (provided that $U$ has been chosen small enough).  

By construction, $\o R$ is locally of finite type and integral.  Moreover, the following lemma says that $(\o R, \o\rho)$ is the initial object of type $u$ in $\GS^{\rm loc}(\u X)$ \emph{on the level of characteristic monoids}.  We defer its proof to Section~\ref{sec:Gillam} in order not to interrupt the construction of $(R,\rho)$.

\begin{lemma} \label{lem:initial-characteristic}
For any $(N,\varphi) \in \GS^{\rm loc}(\u X)$ of type $u$ there is a unique morphism $(\o R, \o\rho) \rightarrow (\o N, \o\varphi)$.
\end{lemma}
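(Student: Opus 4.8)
The plan is to establish the universal property in two stages: first on associated groups, where the statement is exactly the exercise of Remark~\ref{rem:hint}, and then to refine the resulting group homomorphism to the characteristic monoids themselves. Recall that a morphism $(\o R,\o\rho)\to(\o N,\o\varphi)$ is a homomorphism $\psi:\o R\to\o N$ under $\pi^\ast\o M_S$ whose pushout $\psi_X:\o R_X\to\o N_X$ carries $\o\rho$ to $\o\varphi$. Since $\o R$ is integral and $\o N$ is fine, both inject into their associated groups, so $\psi$ is determined by its associated-group version $\psi^{\rm gp}$; this already reduces the uniqueness assertion to a statement about group homomorphisms.

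On the level of associated groups the claim is precisely the exercise of Remark~\ref{rem:hint}, applied with $A=\pi^\ast\o M_S^{\rm gp}$, $B=\o M_X^{\rm gp}$, and $C=\o M_{X/S}^{\rm gp}$. Indeed, an object $(\o N,\o\varphi)$ of type $u$ amounts to a structure map $\pi^\ast\o M_S^{\rm gp}\to\o N^{\rm gp}$ together with a lift of $u$ to a homomorphism $\o M^{\rm gp}\to\o N_X^{\rm gp}=\o N^{\rm gp}\amalg_{\pi^\ast\o M_S^{\rm gp}}\o M_X^{\rm gp}$. The universal property of $\o R_0^{\rm gp}=\o M^{\rm gp}\fpr_{\o M_{X/S}^{\rm gp}}\o M_X^{\rm gp}$ furnishes a unique homomorphism $\o R_0^{\rm gp}\to\o N^{\rm gp}$ compatible with these lifts; passing to the quotient $\epsilon$ by the diagonal copy of $\pi^\ast\o M_S^{\rm gp}$ is exactly what identifies the two structure maps from $\pi^\ast\o M_S^{\rm gp}$, so the induced map $\psi^{\rm gp}:\o R^{\rm gp}\to\o N^{\rm gp}$ is a morphism under $\pi^\ast\o M_S^{\rm gp}$ taking $\o\rho$ to $\o\varphi$, and it is the unique one.

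It remains to show that $\psi^{\rm gp}$ maps the submonoid $\o R$ into $\o N$; this descent from groups to monoids is the heart of the matter and is where the explicit construction of $\o R$ is used. The question is \'etale-local, and $\o R$ is generated by $\pi^\ast\o M_S$ together with the finitely many sections $a$ singled out in the construction. For a generating section $\xi$ of $\o M$ with $\o\rho(\xi)=(a,b)$, where $b$ is the \emph{least} element of $B$, write $\o\varphi(\xi)\in\o N_X$ as a class $(n,b'')$ with $n\in\o N$ and $b''\in\o M_X$. Comparing this with $\psi^{\rm gp}_X(\o\rho(\xi))=(\psi^{\rm gp}(a),b)$ in the group pushout $\o N_X^{\rm gp}$ produces an $s\in\pi^\ast\o M_S^{\rm gp}$ with $b''=b+s$ and $\psi^{\rm gp}(a)=n+s$ (images taken in $\o M_X^{\rm gp}$ and $\o N^{\rm gp}$ respectively). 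One checks that $b''\in B$, so minimality of $b$ gives $b''=b+c$ for some $c$ in the monoid $\pi^\ast\o M_S$; moving $c$ across the pushout rewrites $\o\varphi(\xi)=(n+c,b)$ with $n+c\in\o N$. Equating the two representations $(\psi^{\rm gp}(a),b)=(n+c,b)$ and invoking integrality of $\pi^\ast M_S\to M_X$, which makes $\o N^{\rm gp}\hookrightarrow\o N_X^{\rm gp}$ injective, forces $\psi^{\rm gp}(a)=n+c\in\o N$. Setting $\psi(a)=n+c$ on generators then produces the desired morphism.

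The main obstacle is this final descent: a priori $\psi^{\rm gp}(a)$ lies only in $\o N^{\rm gp}$, and the shift $s$ relating the two expressions for $\o\varphi(\xi)$ could fail to lie in the monoid $\pi^\ast\o M_S$, in which case $\psi^{\rm gp}(a)$ need not be integral over $\o N$. It is exactly the choice of $b$ as the least element of $B$, combined with integrality, that confines the shift to $\pi^\ast\o M_S$ and thereby keeps $\psi^{\rm gp}(a)$ inside $\o N$.
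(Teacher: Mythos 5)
Your proof is correct, and it splits into the same two stages as the paper's, but the second stage is carried out by a genuinely different argument. On associated groups your construction is the paper's: the difference of the two compositions $\o R_0^{\rm gp} \rightarrow \o N_X^{\rm gp}$ factors through $\o N^{\rm gp}$ and kills the diagonal copy of $\pi^\ast \o M_S^{\rm gp}$, giving the bijective correspondence of Remark~\ref{rem:hint} (your phrase ``universal property of the fiber product'' is loose --- fiber products classify maps \emph{in}, not out --- and the verification that the resulting $\psi^{\rm gp}$ actually satisfies $\psi^{\rm gp}_X \circ \o\rho = \o\varphi$ is the sign-sensitive diagram chase that the paper does explicitly at the end of its proof and you leave inside the black box). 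The real divergence is the descent from groups to monoids. The paper argues abstractly: it takes $\o R' \subset \o R^{\rm gp}$ to be the preimage of $\o N$, notes that $\o R' \rightarrow \o N$ is exact and that exactness is preserved by the pushout $(-)_X$ (citing Kato), so $\o R'_X$ is the preimage of $\o N_X$ and therefore contains the image of $\o\rho$; since $\o R$ was constructed as the \emph{smallest} submonoid with that property, $\o R \subset \o R'$. You instead argue generator by generator: for a generator $a$ with $\o\rho(\xi)=(a,b)$ and $b$ least in $B$, you compare representatives of $\o\varphi(\xi)$ in the monoid pushout $\o N_X$ and the group pushout $\o N_X^{\rm gp}$, use integrality twice (the monoid pushout injects into the group pushout, so $b'' \in B$; and $\o N^{\rm gp} \hookrightarrow \o N_X^{\rm gp}$), and then use leastness of $b$ to force the shift $s$ into the monoid $\pi^\ast \o M_S$, whence $\psi^{\rm gp}(a) = n+c \in \o N$. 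Both routes are sound. The paper's is shorter, outsourcing the combinatorics to the exactness lemma and to the assertion (made during the construction of $\o R$) that it is the smallest such submonoid; yours avoids the exactness-preservation input entirely and in effect re-proves that minimality assertion, making visible exactly where the least-element choice and the integrality hypothesis enter.
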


\textsc{The quasi-logarithmic structure $R$ and the map $\rho$.}  This construction will require an object $(N,\varphi) \in \GS^{\rm loc}(\u X)$ and not just a type.  Suppose $(N,\varphi)$ has type $u$ and $(\o R, \o\rho)$ has been constructed as in the steps above.  Then Lemma~\ref{lem:initial-characteristic} implies that there is a canonical map $\o R \rightarrow \o N$ compatible with the tacit maps from $\pi^\ast \o M_S$.  By pulling back $N$ from $\o N$, we obtain a quasi-logarithmic structure $R$ with characteristic monoid $\o R$.

As we have a factorization (again by Lemma~\ref{lem:initial-characteristic})
\begin{equation*} 
\o M \xrightarrow{\o\rho} \o R_X \rightarrow \o N_X
\end{equation*}
and $R_X$ is pulled back from $N_X$ over $\o N_X$, the universal property of the fiber product yields an induced map $\rho : M \rightarrow R$.

\subsection{Verification of Gillam's criteria}
\label{sec:Gillam}

\begin{proof}[Proof of Lemma~\ref{lem:initial-characteristic}]
To prove the lemma, we must show that there is a unique morphism $\o\mu : \o R \rightarrow \o N$ such that the induced diagram below is commutative:
\begin{equation} \label{eqn:5} \vcenter{\xymatrix{
\o M \ar[r]^{\o\rho} \ar[dr]_{\o\varphi} & \o R_X \ar[d]^{\o\mu_X} \\
& \o N_X
}} \end{equation}


Given $(N,\varphi)$, we have a diagram with exact rows that commutes except for some of the parts involving the dashed arrow:
\begin{equation*} \xymatrix{
0 \ar[r] & \pi^\ast \o M_S^{\rm gp} \ar[r] \ar@{=}[d] & \o R_0^{\rm gp} \ar[r] \ar[d] & \o M^{\rm gp} \ar[r] \ar[d]^u \ar@{-->}[ddl]^(0.7){\o\varphi} & 0 \\
0 \ar[r] & \pi^\ast \o M_S^{\rm gp} \ar[r] \ar[d] & \o M_X^{\rm gp} \ar[r] \ar[d] & \o M_{X/S}^{\rm gp} \ar[r] \ar@{=}[d] & 0 \\
0 \ar[r] & \o N^{\rm gp} \ar[r] & \o N_X^{\rm gp} \ar[r] & \o M_{X/S}^{\rm gp} \ar[r] & 0
} \end{equation*}
The lower triangle involving $\o\varphi$ commutes (because $(N,\varphi)$ has type $u$) but the upper one may not.  The difference of the two compositions
\begin{gather*}
\o R_0^{\rm gp} \rightarrow \o M^{\rm gp} \xrightarrow{\o\varphi} \o N_X^{\rm gp} \\
\o R_0^{\rm gp} \rightarrow \o M_X^{\rm gp} \rightarrow \o N_X^{\rm gp}
\end{gather*}
factors uniquely through a map $\o \mu : \o R_0^{\rm gp} \rightarrow \o N^{\rm gp}$.  Moreover, $\o\mu$ vanishes on the diagonal copy of $\pi^\ast \o M_S^{\rm gp}$ inside $\o R_0^{\rm gp}$.  This gives a factorization:
\begin{equation*}
\pi^\ast \o M_S^{\rm gp} \rightarrow \o R^{\rm gp} \xrightarrow{\o\mu} \o N^{\rm gp} 
\end{equation*}
Moreover, the construction of $\o\mu$ is easily reversed to give a bijective correspondence between maps $\o\mu : \o R^{\rm gp} \rightarrow \o N^{\rm gp}$ compatible with the tacit maps from $\pi^\ast \o M_S^{\rm gp}$ and morphisms $\o\varphi : \o M^{\rm gp} \rightarrow \o N_X^{\rm gp}$ compatible with the type (cf.\ Remark~\ref{rem:hint}).

We must verify that the image of $\o\mu : \o R \rightarrow \o N^{\rm gp}$ lies in $\o N$.  By definition, $\o N_X \subset \o N_X^{\rm gp}$ contains the image of $\o\varphi : \o M \rightarrow \o N_X^{\rm gp}$.  Write $\o R' \subset \o R^{\rm gp}$ for the pre-image of $\o N$ via the map $\o R^{\rm gp} \rightarrow \o N^{\rm gp}$; thus $\o R' \rightarrow \o N$ is an exact morphism of monoids \cite[Definition~4.6~(1)]{Kato}.  Then $\o R'_X \rightarrow \o N_X$ is also exact [\emph{loc.\ cit.}], so $\o R'_X$ coincides with the pre-image of $\o N_X \subset \o N_X^{\rm gp}$.  Furthermore, $\o R'_X$ contains the image of $\o\rho : \o M \rightarrow \o R_X^{\rm gp}$ because $\o N_X$ contains the image of $\o\varphi : \o M \rightarrow \o N_X^{\rm gp}$.  On the other hand, $\o R$ was constructed as the smallest submonoid of $\o R^{\rm gp}$ such that $\o R_X$ contains the image of $\o\rho : \o M \rightarrow \o R_X^{\rm gp}$.  Therefore $\o R \subset \o R'$ and the image of $\o R \rightarrow \o N_X$ is contained in $\o N$.
 
Finally, to get the commutativity of~\eqref{eqn:5}, it is sufficient to work on the level of associated groups.  Assemble the diagram below, which is commutative except for some of the parts involving the dashed arrows:
\begin{equation*} \xymatrix{
0 \ar[r] & \pi^\ast \o M_S^{\rm gp} \ar[r] \ar[d] & \o R_0^{\rm gp} \ar[r] \ar[d]^(0.4){\beta} \ar@{-->}[dl]_{\epsilon} \ar@{-->}[ddl]^(0.7){\o\mu \epsilon} & \o M^{\rm gp} \ar[r] \ar[d] \ar@{-->}[ddl]^(0.7){\o\varphi} \ar@{-->}[dl]_{\o\rho} & 0 \\
0 \ar[r] & \o R^{\rm gp} \ar[r] \ar[d]_{\o\mu} \ar[r] & \o R_X^{\rm gp} \ar[r] \ar[d]_(0.5){\o\mu_X} & \o M_{X/S}^{\rm gp} \ar[r] \ar@{=}[d] & 0 \\
0 \ar[r] & \o N^{\rm gp} \ar[r] & \o N_X^{\rm gp} \ar[r] & \o M_{X/S}^{\rm gp} \ar[r] & 0
} \end{equation*}
We write $d$ for any of the horizontal arrows.  To show that $\o\varphi = \o\mu_X \circ \o\rho$ it is sufficient to show that $\o\varphi \circ d = \o\mu_X \circ \o\rho \circ d$.  Recall that $\o\rho$ was constructed such that $\o\rho \circ d = \beta - d \circ \epsilon$ and $\o\mu$ was constructed such that $d \circ \o\mu \circ \epsilon = \o\mu_X \circ \beta - \o\varphi \circ d$.  Therefore,%
\begin{equation*}
\o\mu_X \circ \o\rho \circ d  = \o\mu_X \circ \beta - \o\mu_X \circ d \circ \epsilon  = \o\mu_X \circ \beta - d \circ \o\mu \circ \epsilon = \o\varphi \circ d
\end{equation*}
as required.
\end{proof}

\begin{proof}[Proof of Lemma~\ref{lem:local-existence}]
To minimize excess notation, we assume (without loss of generality) that $\u Y = \u X$.

Consider a morphism $(N',\varphi') \rightarrow (N,\varphi)$ of $\GS^{\rm loc}(\u X)$.  We verify that the map $(R,\rho) \rightarrow (N,\varphi)$ factors in a unique way through $(N',\varphi')$.  In Lemma~\ref{lem:initial-characteristic}, we have already seen that there is a unique map $\o R \rightarrow \o N'$, compatible with the maps from $\pi^\ast \o M_S$, and a unique factorization of $\o\varphi : \o M \rightarrow \o N'_X$ through $\o\rho : \o M \rightarrow \o R_X$.  In particular, the diagram below commutes:
\begin{equation*} \xymatrix{ 
\o R \ar[r]^{\o\varphi'} \ar[dr]_{\o\varphi} & \o N' \ar[d] \\
& \o N
} \end{equation*}
Since $N'$ is pulled back from $N$ by the vertical arrow in the diagram above, this gives a uniquely determined arrow $R \rightarrow N'$.  Likewise, the diagrams of solid arrows below are commutative (the diagonal arrow on the left coming from Lemma~\ref{lem:initial-characteristic}):
\begin{equation*} \xymatrix{
\o M \ar[r] \ar[d] & \o R_X \ar[dl] \ar[d] \\
\o N_X' \ar[r] & \o N_X
} \qquad \xymatrix{
M \ar[r] \ar[d] & R_X \ar[d] \ar@{-->}[dl] \\
N_X' \ar[r] & N_X
} \end{equation*}
As $N'_X$ is pulled back from $N_X$ via the map $\o N'_X \rightarrow \o N_X$ there is therefore a unique induced map $R \rightarrow N'_X$ completing the diagram on the right.  This proves the minimality of $(R,\rho)$.
\end{proof}

\begin{proof}[Proof of Lemma~\ref{lem:local-pullback}]
We show that the tools used in the construction of $R$ and $\rho$ are all compatible with pullback of quasi-logarithmic structures.  Pullback of quasi-logarithmic structures along a morphism $g : Y' \rightarrow Y$ involves two steps:  pullback of \'etale sheaves along $g$ followed by pushout of extensions along $g^{-1} \mathcal{O}_Y^\ast \rightarrow \mathcal{O}_{Y'}^\ast$.  We verify that the construction of $R$ and $\rho$ commutes with these operations:
\begin{enumerate}
\item $\o R^{\rm gp}$ was constructed as a quotient of a fiber product, and both fiber products and quotients are preserved by pullback of \'etale sheaves;
\item $\o\rho$ was induced by the universal property of $\o M$ as a quotient, and quotients are preserved by pullback of \'etale sheaves;
\item $\o R$ was built as the sheaf of submonoids of $\o R^{\rm gp}$ generated by a collection of local sections, and this construction is compatible with pullback of \'etale sheaves;
\item $R$ was constructed as a pullback of an extension by $\mathcal{O}_Y^\ast$, and such pullbacks are preserved by pullback of \'etale sheaves and by pushout along $g^{-1} \mathcal{O}_Y^\ast \rightarrow \mathcal{O}_{Y'}^\ast$;
\item $\rho$ was induced by the universal property of a base change of extensions, and, as remarked above, base change of extensions is preserved by pullback of \'etale sheaves and pushout along the kernels.
\end{enumerate}
\end{proof}

\section{Global minimality}
\label{sec:global-case}

In this section, $X = (\u X, M_X)$ and $S = (\u S, M_S)$ will be fine logarithmic schemes.  We assume that the projection $\pi : X \rightarrow S$ is proper and flat \emph{with reduced geometric fibers} and that the morphism of logarithmic structures $\pi^\ast M_S \rightarrow M_X$ is integral.  We also assume a second coherent logarithmic structure $M$ on $X$ has been specified, along with a morphism $\pi^\ast M_S \rightarrow M$.  
We will verify Gillam's minimality criterion (Proposition~\ref{prop:minimal-criteria}) for the fibered category $\Hom_{\LogSch/S}(M,M_X)$ over $\LogSch$. 

We continue to use Convention~\ref{conv:pushout} to notate pushouts, as well as to work with quasi-logarithmic structures instead of logarithmic structures.  Define $\GS(S)$ to be the category of pairs $(N_S,\varphi)$ where $N_S$ is a quasi-logarithmic structure on $S$ equipped with a tacitly specified morphism $M_S \rightarrow N_S$ and $\varphi$ fits into a commutative diagram
\begin{equation*} \xymatrix{
\pi^\ast M_S \ar[r] \ar[d] \ar@/^15pt/[rr] & \pi^\ast N_S \ar[d] & M \ar[dl]^{\varphi} \\
M_X \ar[r] & N_X
} \end{equation*}
whose square is cocartesian.  By pullback of quasi-logarithmic structures, we may assemble this definition into a fibered category over $\LogSch/S$.  When $T$ is strict over $S$, we write $\GS(\u T)$ instead of $\GS(T)$.

We may recognize $\Hom_{\LogSch/S}(M,M_X)(T)$%
\footnote{The notation $\Hom_{\LogSch/S}(M,M_X)$ refers to the fibered category over $\LogSch/S$ whose value on a logarithmic scheme $T$ over $S$ is the set of morphisms of logarithmic structures $M \rest{T} \rightarrow M_X \rest{T}$ on $\u X \mathbin{\times}_{\u S} \u T$, compatible with the tacit morphisms from $\pi^\ast M_T$.}
inside of $\GS(T)$ as the opposite of the category of pairs $(N_{T},\varphi)$ where $N_{T}$ is a logarithmic structure, as opposed to merely quasi-logarithmic structure.  We are free to work with $\GS(T)$ in place of $\Hom_{\LogSch/S}(M,M_X)(T)$ as minimal objects of the latter may be induced from minimal objects of the former by passage to the associated logarithmic structure.
  
\begin{lemma} \label{lem:minimal-existence}
For any logarithmic scheme $T$ over $S$, every object of $\GS(T)$ admits a morphism from a minimal object.
\end{lemma}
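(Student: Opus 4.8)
The plan is to reduce the construction of a global minimal object to the local one of Section~\ref{sec:local-case}, transporting the local minimal object from $\u X$ down to $\u S$ along the left adjoint $\pi_!$ to $\pi^\ast$ furnished by Section~\ref{sec:pi_0}. First I would reduce to the case $T = S$: since $\GS$ is defined over $\LogSch/S$ by pullback of quasi-logarithmic structures, and since properness, flatness, reducedness of the geometric fibers, and integrality of $\pi^\ast M_S \rightarrow M_X$ are all stable under base change, I may replace $S$ by $T$ (and $X$, $M_X$, $M$ by their pullbacks) and thereby assume the given object is $(N_S,\varphi) \in \GS(S)$. Pulling $N_S$ back along $\pi$ produces an object $(\pi^\ast N_S, \varphi)$ of $\GS^{\rm loc}(\u X)$, and Lemma~\ref{lem:local-existence} supplies the local minimal object $(R,\rho) \in \GS^{\rm loc}(\u X)$ together with its canonical morphism to $(\pi^\ast N_S,\varphi)$, and in particular the coherent sheaf $\o R \subset \o R^{\rm gp}$ on $\u X$ carrying the tacit map from $\pi^\ast \o M_S$.

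The heart of the argument is to descend $(R,\rho)$ to $S$. I would define the characteristic monoid of the candidate global object by the relative left adjoint over $M_S$, namely $\o R_S := \pi_!(\o R) \amalg_{\pi_!(\pi^\ast \o M_S)} \o M_S$, where the second map is the counit of the adjunction and the first is $\pi_!$ of the tacit map; its groupification $\o R_S^{\rm gp}$ is computed the same way from $\o R^{\rm gp}$. The adjunction $\Hom_{\et(\u S)}(\pi_! F, G) \cong \Hom_{\et(\u X)}(F, \pi^\ast G)$ then identifies morphisms $\o R_S \rightarrow \o N_S$ compatible with the map from $\o M_S$ with morphisms $\o R \rightarrow \pi^\ast \o N_S = \o N$ compatible with the map from $\pi^\ast \o M_S$. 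Applying this to the canonical local map $\o R \rightarrow \o N$ produces a canonical $\o R_S \rightarrow \o N_S$, along which I would pull back $N_S$ to obtain the quasi-logarithmic structure $R_S$ with characteristic monoid $\o R_S$. The unit $\o R \rightarrow \pi^\ast \o R_S$ induces $\o R_X \rightarrow (\o R_S)_X$ (Convention~\ref{conv:pushout}), and composing with $\o\rho$ gives $\o\rho_S : \o M \rightarrow (\o R_S)_X$; the universal property of the fiber product then yields $\rho_S : M \rightarrow (R_S)_X$ exactly as at the end of Section~\ref{sec:local}, so that $(R_S,\rho_S) \in \GS(S)$.

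To verify minimality, I would argue that for any morphism $(N_S',\varphi') \rightarrow (N_S,\varphi)$ of $\GS(S)$ the canonical map $(R_S,\rho_S) \rightarrow (N_S,\varphi)$ factors uniquely through $(N_S',\varphi')$. At the level of characteristic monoids, the adjunction converts maps out of $\o R_S$ compatible with $\o M_S$ into maps out of $\o R$ compatible with $\pi^\ast \o M_S$, where uniqueness and existence are exactly the content of Lemma~\ref{lem:initial-characteristic} and the factorization established in the proof of Lemma~\ref{lem:local-existence}. The quasi-logarithmic level and the cocartesian condition then follow as in the local case, using that $N_S'$ is pulled back from $N_S$, so the factorization is forced and unique.

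The hard part will be ensuring that the left adjoint interacts correctly with the extra structure, rather than the bare group-level bijection. Concretely I must check: (i) that the displayed pushout really computes the relative left adjoint on sheaves of monoids equipped with a map from $M_S$, and that $\pi_!$ commutes with groupification so that $\o R_S^{\rm gp}$ is unambiguous; (ii) that $\o R_S$ is again coherent, which is where the finiteness established in Section~\ref{sec:pi_0} for $\pi_!$ of a coherent sheaf is indispensable; and (iii) that the descent is compatible with the extension by $\mathcal{O}^\ast$ defining the quasi-logarithmic structure, i.e.\ that $\pi^\ast R_S$ together with the unit reproduces $(R,\rho)$ after pullback, so that the reduction of minimality to the local case is legitimate. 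It is precisely at these points that properness, flatness, and geometric reducedness of the fibers are used, since these are the hypotheses under which $\pi_!$ exists and is well behaved.
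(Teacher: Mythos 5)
Your proposal is correct and follows essentially the same route as the paper: reduce to $T=S$, take the local minimal object $(R,\rho)\rightarrow(\pi^\ast N_S,\varphi)$ from Lemma~\ref{lem:local-existence}, form the characteristic monoid as the pushout $\pi_!\o R\amalg_{\pi_!\pi^\ast\o M_S}\o M_S$ (the paper's $\o Q_S$ in diagram~\eqref{eqn:1}), use the $\pi_!\dashv\pi^\ast$ adjunction to produce the map to $\o N_S$, pull back $N_S$ to get the quasi-logarithmic structure, and verify minimality by translating maps out of the pushout into maps out of $\o R$ where Lemma~\ref{lem:initial-characteristic} and local minimality apply. The technical checks you flag at the end (compatibility of $\pi_!$ with the monoid-level pushout and with the $\mathcal{O}^\ast$-extension) are exactly the points the paper handles via Corollary~\ref{cor:pi_!-pullback}, the adjunction unit/counit identities in diagram~\eqref{eqn:8}, and the fact that the quasi-logarithmic structure is pulled back from $N_S$, so they do not constitute a gap.
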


\begin{lemma} \label{lem:minimal-pullback}
If $f : T' \rightarrow T$ is a morphism of logarithmic schemes over $S$ and $(Q_T,\psi)$ is minimal in $\GS(T)$ then $f^\ast (Q_T, \psi)$ is minimal in $\GS(T')$.
\end{lemma}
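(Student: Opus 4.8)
The plan is to follow the template of Lemma~\ref{lem:local-pullback}: rather than comparing $f^\ast(Q_T,\psi)$ against an arbitrary object of $\GS(T')$, I would show that the entire construction of the global minimal object of Lemma~\ref{lem:minimal-existence} commutes with base change, so that $f^\ast(Q_T,\psi)$ is canonically identified with the minimal object obtained by running that construction over $T'$. Granting such an identification, the minimality of $f^\ast(Q_T,\psi)$ is immediate.

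Recall the two stages of the construction. Over $\u X_T = \u X \fpr_{\u S} \u T$ one first forms the local minimal object $(R_T,\rho_T) \in \GS^{\rm loc}(\u X_T)$ of Section~\ref{sec:local}; one then descends its characteristic monoid along $\pi_T : X_T \to T$ by applying the left adjoint $\pi_{T!}$ to $\pi_T^{-1}$ constructed in Section~\ref{sec:pi_0}, setting $\o Q_T = \pi_{T!}\,\o R_T$, and finally builds $Q_T$ and $\psi$ on top of $\o Q_T$ by pullback of an extension by $\mathcal O^\ast$ and the universal property of the pushout $(-)_X$. Accordingly, the proof would rest on two compatibilities. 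First, Lemma~\ref{lem:local-pullback} already establishes that the local construction commutes with pullback: writing $g : \u X_{T'} \to \u X_T$ for the base change of the underlying morphism $\u f : \u {T}' \to \u T$, we have $g^\ast(R_T,\rho_T) \cong (R_{T'},\rho_{T'})$ and in particular $g^\ast \o R_T \cong \o R_{T'}$. Second, I would invoke the base-change property of the left adjoint proved in Section~\ref{sec:pi_0}: for the cartesian square formed by $\pi_T,\pi_{T'},\u f,g$ there is a canonical isomorphism $\u f^\ast \pi_{T!} \cong \pi_{T'!}\,g^\ast$.

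Combining these gives
\[ \u f^\ast \o Q_T = \u f^\ast \pi_{T!}\,\o R_T \;\cong\; \pi_{T'!}\,g^\ast \o R_T \;\cong\; \pi_{T'!}\,\o R_{T'} = \o Q_{T'}, \]
so the characteristic monoid of $f^\ast (Q_T,\psi)$ coincides with that of the minimal object over $T'$. It remains to propagate this identification through the remaining layers. The adjunction unit $\o R_T \to \pi_T^{-1}\o Q_T$, the extension $Q_T$ pulled back from $N$ over $\o Q_T$, and the comparison map $\psi$ are all built by pullback of extensions along $\mathcal O^\ast$ and by universal properties of pushouts; these operations commute with pullback of quasi-logarithmic structures by exactly the bookkeeping of items~(4)--(5) in the proof of Lemma~\ref{lem:local-pullback}, so I would verify that each is carried to its analogue over $T'$ under the isomorphism displayed above.

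The main obstacle is the base-change isomorphism $\u f^\ast \pi_{T!} \cong \pi_{T'!}\,g^\ast$, and this is precisely where the geometric hypotheses on $\pi$ are spent: flatness, finite presentation, and geometrically reduced fibers guarantee that the relative connected-components (relative $\pi_0$) functor underlying $\pi_!$ is stable under base change (reducedness prevents fibrewise components from merging, while flatness and finite presentation keep their formation locally constant). Assuming the base-change statement from Section~\ref{sec:pi_0}, the only genuine subtlety I anticipate is checking that this isomorphism is one of sheaves of \emph{monoids} compatible with the adjunction units, so that it descends correctly through the pushout $(-)_X$ and through the passage from characteristic monoids to quasi-logarithmic structures; the torsor-level verification is then routine and entirely parallel to Lemma~\ref{lem:local-pullback}.
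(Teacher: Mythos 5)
Your proposal is correct and takes essentially the same approach as the paper's proof, which likewise establishes minimality of the pullback by checking that every ingredient of the construction of $(Q_T,\psi)$ commutes with base change: the local minimal object via Lemma~\ref{lem:local-pullback}, the formation of $\pi_!\o R$ via Corollary~\ref{cor:pi_!-pullback}, and the remaining pushouts, extensions by $\mathcal{O}^\ast$, and fiber products via the fact that pullback of sheaves preserves colimits and the relevant limits. One small correction: $\o Q_T$ is not $\pi_{T!}\o R_T$ but the pushout of $\o M_T \leftarrow \pi_{T!}\pi_T^\ast \o M_T \rightarrow \pi_{T!}\o R_T$ (the two agree only when $\pi$ has connected fibers), though since pushouts also commute with pullback, as your own bookkeeping notes, this does not affect the argument.
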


Note that passage to the associated logarithmic structure commutes with pullback of pre-logarithmic structures \cite[(1.4.2)]{Kato}, so Lemma~\ref{lem:minimal-pullback} implies that minimal objects of $\Hom_{\LogSch/S}(M,M_X)(\u T)$ pull back to minimal objects of $\Hom_{\LogSch/S}(M,M_X)(\u T')$.

The strategy of proof for Lemmas~\ref{lem:minimal-existence} and~\ref{lem:minimal-pullback} will be to bootstrap from the minimal objects of $\GS^{\rm loc}(\u X)$ constructed in Section~\ref{sec:local-case}.  The essential tool in this construction is a left adjoint to pullback for \'etale sheaves, constructed in Section~\ref{sec:pi_0}, for whose construction is the reason we must assume $X$ has reduced geometric fibers over $S$.  Having dispensed with generalities in Section~\ref{sec:pi_0}, we take up the construction of minimal objects of $\GS(\u T)$ in Section~\ref{sec:reduction}.

\subsubsection*{Zariski logarithmic structures}

The following proposition will only be used in Appendix~\ref{app:calc}.  It shows that when $M_S$ and $M$ are Zariski logarithmic structures, one can replace $M_X$ by its best approximation by a Zariski logarithmic structure for the purpose of constructing the category $\GS$.  At least in many situations, this means that one can work in the Zariski topology rather than the \'etale topology for the purpose of constructing a minimal logarithmic structure.  See Appendix~\ref{app:calc} for more details.

\begin{lemma} \label{lem:zar-pull}
	Let $X$ be a scheme.  Denote by $\tau : \et(X) \rightarrow \zar(X)$ the morphism of sites.  Then $\Hom(\tau^\ast F, \tau^\ast G) = \Hom(F,G)$ for any sheaves $F$ and $G$ on $\zar(X)$.  In particular, $\tau_\ast \tau^\ast \simeq \id$.
\end{lemma}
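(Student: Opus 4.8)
The plan is to deduce everything from the adjunction between $\tau^\ast$ and $\tau_\ast$. Recall that $\tau_\ast$ is just restriction: for an \'etale sheaf $H$ and a Zariski open $U \subseteq X$ one has $(\tau_\ast H)(U) = H(U)$, viewing $U$ as an object of $\et(X)$ through the open immersion. For any Zariski sheaf $F$, the natural map $\Hom(F,G) \to \Hom(\tau^\ast F, \tau^\ast G)$, $\phi \mapsto \tau^\ast \phi$, becomes, after composing with the adjunction isomorphism $\Hom(\tau^\ast F, \tau^\ast G) \cong \Hom(F, \tau_\ast \tau^\ast G)$, postcomposition with the unit $\eta_G : G \to \tau_\ast \tau^\ast G$ (by naturality of the unit). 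By the Yoneda lemma this composite is bijective for all $F$ if and only if $\eta_G$ is an isomorphism, which is also precisely the assertion $\tau_\ast \tau^\ast \simeq \id$. I would therefore reduce the entire lemma to showing that $\eta_G$ is an isomorphism, equivalently that $(\tau^\ast G)(U) = G(U)$ for every Zariski open $U$.

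Next I would describe $\tau^\ast G$ explicitly as the \'etale sheafification of the presheaf $P$ given by $P(V) = G(\operatorname{im} V)$ for $V \to X$ \'etale. The key geometric input is that an \'etale morphism is open, so $\operatorname{im} V$ is a Zariski open of $X$; moreover it is the smallest Zariski open through which $V \to X$ factors, and this identifies the presheaf-level pullback of $G$ with $P$. Functoriality of $P$ is immediate, since a morphism $V' \to V$ over $X$ gives $\operatorname{im} V' \subseteq \operatorname{im} V$ and hence a restriction $G(\operatorname{im} V) \to G(\operatorname{im} V')$. I would note that $P$ is genuinely not a sheaf (it fails additivity on disjoint unions), so the sheafification step is doing real work and must be controlled.

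Finally I would check that sheafification leaves the value of $P$ on a Zariski open $U$ unchanged. Given an \'etale cover $\{V_i \to U\}$, the images $W_i = \operatorname{im} V_i$ form a Zariski open cover of $U$, since the $V_i \to U$ are jointly surjective and open; likewise $\operatorname{im}(V_i \fpr_U V_j) = W_i \cap W_j$, and the two projections induce on $P$ exactly the Zariski restrictions $G(W_i) \to G(W_i \cap W_j)$. Hence a matching family for $P$ over the \'etale cover $\{V_i\}$ is the same datum as a matching family for $G$ over the Zariski cover $\{W_i\}$ of $U$, which, $G$ being a Zariski sheaf, glues uniquely to an element of $G(U)$. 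Thus the \v{C}ech zeroth cohomology of $P$ over any \'etale cover of $U$ equals $G(U)$ compatibly, so the plus construction, and therefore $\tau^\ast G$, takes the value $G(U)$ on $U$. This gives $(\tau_\ast \tau^\ast G)(U) = (\tau^\ast G)(U) = G(U)$, so $\eta_G$ is an isomorphism.

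The main obstacle I anticipate is the bookkeeping in this last step: one must verify carefully that the identification $P(V) = G(\operatorname{im} V)$ is compatible with fibre products and with the two projection maps, so that \'etale descent for $P$ over $U$ collapses exactly onto Zariski descent for $G$ over $\{W_i\}$. Everything else is formal, and the argument is insensitive to whether $F$ and $G$ are sheaves of sets, monoids, or groups, since the limits involved are computed componentwise.
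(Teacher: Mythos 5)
Your argument takes a genuinely different route from the paper's. The paper makes the same adjunction reduction you do, but then dispatches the computation $(\tau_\ast\tau^\ast G)(U)=G(U)$ in one stroke: the espace \'etal\'e of the Zariski sheaf $G$ is a scheme over $X$, morphisms into a scheme satisfy \'etale descent, so the \'etale sheaf it represents is $\tau^\ast G$ and its sections over a Zariski open $U$ are just $G(U)$. You instead compute with the presheaf pullback $P(V)=G(\operatorname{im}V)$ directly, and this part of your argument is correct: \'etale maps are open, so the factorization category has an initial object and the presheaf pullback is indeed $P$; the identity $\operatorname{im}(V_i\times_U V_j)=W_i\cap W_j$ holds (a fiber product of nonempty schemes over a field is nonempty); and hence matching families for $P$ over an \'etale cover $\{V_i\to U\}$ of a Zariski open $U$ are the same as matching families for $G$ over the Zariski cover $\{W_i\}$. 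Your approach is more elementary and self-contained, at the cost of length; the paper's is shorter but leans on descent for representable functors.

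There is, however, one gap in your last step. What you have proved is that $\check{H}^0(\mathcal{V},P)=G(U)$ for every \'etale cover $\mathcal{V}$ of a Zariski open $U$, i.e.\ that $P^+(U)=G(U)$. But the sheafification is $P^{++}$, not $P^+$, and to evaluate $P^{++}$ on $U$ one needs to control $P^+$ on arbitrary \'etale $V$ over $U$ -- and there your dictionary breaks down, since for a cover $\{V_{ik}\to V_i\}$ of an \'etale $V_i$ the relevant fiber products are taken over $V_i$ rather than over a Zariski open, and $\operatorname{im}(V_{ik}\times_{V_i}V_{ik'})$ can be strictly smaller than $\operatorname{im}V_{ik}\cap\operatorname{im}V_{ik'}$; indeed $P^+(V)$ need not equal $G(\operatorname{im}V)$ (already $V=U\amalg U$ shows this). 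The repair is short and uses only facts you already have: $P$ is \emph{separated}, because for any \'etale cover $\{V_i\to V\}$ the opens $\operatorname{im}V_i$ form a Zariski cover of $\operatorname{im}V$ and $G$ is a Zariski sheaf, so $P(V)\to\prod_i P(V_i)$ is injective. For a separated presheaf a single plus construction already produces the sheafification, so $\tau^\ast G=P^+$ and your computation of $P^+(U)$ finishes the proof. With that one observation inserted, your argument is complete.
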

\begin{proof}
	By adjunction, we have $\Hom(\tau^\ast F, \tau^\ast G) = \Hom(F, \tau_\ast \tau^\ast G)$.  But we can calculate that $\tau_\ast \tau^\ast G(U) = \tau^\ast G(U) = G(U)$ for any open $U \subset X$.  The first equality is the definition; the second equality holds because, for example, the espace \'etal\'e of $G$ (in the Zariski topology) is a scheme, hence satisfies \'etale descent.
\end{proof}

\begin{proposition} \label{prop:zar-GS}
	Suppose that $M$ and $M_S$ are Zariski logarithmic structures and that $\o M_S^{\rm gp}$ is torsion free.  Let $\tau$ denote the canonical morphism from the \'etale site to the Zariski site.  Define $\GS'$ be the category obtained by imitating the definition of $\GS$ with $M_X$ replaced by $\tau^\ast \tau_\ast M_X$.  Then $\GS \simeq \GS'$.
\end{proposition}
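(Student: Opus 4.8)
The plan is to let the coreflection of \'etale sheaves onto Zariski sheaves do all the work, reducing the asserted equivalence to a single statement about the Zariskification of the pushout $N_X$, which I then settle by a monodromy computation at geometric points.

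First I would set up the comparison functor. The counit of the adjunction $\tau^\ast \dashv \tau_\ast$ gives a canonical map $\tau^\ast\tau_\ast M_X \to M_X$, and since $\o M_X$ is an \'etale sheaf this map is injective on characteristic monoids (Zariski-local sections that agree \'etale-locally already agree). Pushing it out along $\pi^\ast M_S \to \pi^\ast N_S$ (Convention~\ref{conv:pushout}) yields, for every quasi-logarithmic structure $N_S$ on $S$ under $M_S$, a map $N_X' \to N_X$ that is natural in $N_S$ and compatible with the two structure maps from $\pi^\ast M_S$. Post-composition with $N_X' \to N_X$ carries a datum $(N_S,\varphi') \in \GS'$ to $(N_S,\varphi) \in \GS$ and assembles into a functor $\Phi \colon \GS' \to \GS$ of fibered categories. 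As $\Phi$ is the identity on the $N_S$-component, on objects and on morphisms alike, showing it is an equivalence reduces to showing that post-composition $\Hom(M,N_X') \to \Hom(M,N_X)$ is a bijection, naturally in $N_S$; this one statement yields essential surjectivity and full faithfulness simultaneously.

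Next I would use that $M$ is Zariski. Because $\tau_\ast\tau^\ast \simeq \id$ (Lemma~\ref{lem:zar-pull}), the functor $\tau^\ast$ embeds Zariski sheaves as a coreflective subcategory of \'etale sheaves with coreflector $\tau^\ast\tau_\ast$ and counit $\tau^\ast\tau_\ast \to \id$. Since $M = \tau^\ast\tau_\ast M$, every morphism out of $M$ factors uniquely through the counit, so $\Hom(M,L) = \Hom(M,\tau^\ast\tau_\ast L)$ for any target $L$. Applying this to $L = N_X$ and $L = N_X'$, it suffices to show that $N_X' \to N_X$ induces an isomorphism $\tau_\ast \o N_X' \xrightarrow{\sim} \tau_\ast \o N_X$ on characteristic monoids; the extension by $\mathcal O_X^\ast$ is common to $N_X$ and $N_X'$, and once the characteristic-level statement is in hand the quasi-logarithmic lifts match by the pullback-of-extensions formalism of Definition~\ref{def:quasi-log}. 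Injectivity of $\tau_\ast \o N_X' \to \tau_\ast \o N_X$ is the easy direction: integrality of $\pi^\ast M_S \to M_X$ makes $\o N_X' \to \o N_X$ injective, and $\tau_\ast$ is left exact.

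The heart of the argument, and the step I expect to be the main obstacle, is the surjectivity of $\tau_\ast \o N_X' \to \tau_\ast \o N_X$, i.e.\ the claim that every Zariski-local section of $\o N_X$ already comes from $\o N_X'$. The difficulty is structural: $\tau_\ast$ is a right adjoint and does not commute with the pushout defining $N_X$, so the trivial identity $\tau_\ast\tau^\ast\tau_\ast M_X = \tau_\ast M_X$ cannot simply be propagated through the pushout. To overcome this I would work at a geometric point $\o x$ with image $\o s$ and identify the Zariski stalk $(\tau_\ast \o N_X)_x$ with the invariants of the stalk $\o N_{X,\o x} = \o N_{S,\o s} \amalg_{\o M_{S,\o s}} \o M_{X,\o x}$ under the local \'etale fundamental group $G$, which is profinite. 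Using integrality to represent a class by a pair $(n,m)$ with $m \in \o M_X$ (cf.\ Appendix~\ref{sec:integral}), and using that $M_S$ is Zariski so that $G$ acts trivially on $\pi^\ast \o M_S^{\rm gp}$, a $G$-invariant class forces $g\cdot m - m \in \pi^\ast \o M_S^{\rm gp}$ for all $g \in G$, i.e.\ a continuous $1$-cocycle $G \to \pi^\ast \o M_S^{\rm gp}$ with trivial coefficients. This is exactly where torsion-freeness is decisive: since $G$ is profinite and $\o M_S^{\rm gp}$ is torsion free, $H^1(G,\o M_S^{\rm gp}) = \Hom_{\mathrm{cont}}(G,\o M_S^{\rm gp}) = 0$, so the cocycle is a coboundary and $m$ may be corrected by an element of $\pi^\ast \o M_S^{\rm gp}$ to a $G$-invariant representative, that is, one lying in $\tau^\ast\tau_\ast \o M_X$. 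Hence the section descends to $\o N_X'$, which is precisely the required surjectivity; the only remaining bookkeeping is to check that the invariant representative is compatible with the type $u$ and the two maps from $\pi^\ast M_S$, which is formal once the representative has been chosen.
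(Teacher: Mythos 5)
Your global strategy is reasonable, and its first half is sound: the functor $\GS' \rightarrow \GS$ via the counit, the observation that it is the identity in the $N_S$-component, and the reduction of the equivalence to the single statement that $\tau_\ast \o N'_X \rightarrow \tau_\ast \o N_X$ is an isomorphism (using $\o M = \tau^\ast \tau_\ast \o M$ and adjunction) all work, and they are close in spirit to the paper's proof, which instead shows directly that $\tau_\ast$ carries the cocartesian square defining $N_X$ to a cocartesian square and then recovers an object of $\GS'$ by applying $\tau^\ast$ and one further pushout. You also correctly predicted where the difficulty lies and correctly identified the two ingredients that must enter (Hilbert 90 for the $\mathcal{O}_X^\ast$-part, and vanishing of continuous homomorphisms from a profinite group into the discrete torsion-free group $\o M_S^{\rm gp}$). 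The gap is in the mechanism by which you bring those ingredients to bear.

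The identification on which your surjectivity argument rests --- that the Zariski stalk $(\tau_\ast \o N_X)_x$ equals the invariants of the geometric stalk $\o N_{X,\bar x}$ under a profinite group $G$ acting on that stalk --- is false, because $\o N_X$ is constructible but not locally constant: Zariski-local sections of such a sheaf are cut out not by symmetries of the closed-point stalk but by compatibility with the generization maps to stalks at nearby points, together with the monodromy acting there. The paper's own Corollary~\ref{cor:pushforward}(2) gives a counterexample inside the setting of this very proposition: at a node $q$ where a single irreducible component self-intersects (say over $S = \Spec \CC$ with its standard log point), one has $\o M_{X,\bar q} \cong \NN^2$, the residue field at $q$ is algebraically closed so \emph{every} stalk element is invariant under any group acting through local symmetries at $\bar q$, and yet $(\tau_\ast \o M_X)_q$ is only the diagonal $\NN \subset \NN^2$, because monodromy around the punctured Zariski neighborhood interchanges the two branch generizations. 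Running your argument there with $N_S = M_S$: invariance is vacuous, your cocycle is trivial, and you would conclude that the class of $(n,(1,0))$ lifts to $\o N'_X$ --- but it does not, since no correction by $\o M_S^{\rm gp}\cdot(1,1)$ moves $(1,0)$ into the diagonal. So the step ``invariant class $\Rightarrow$ representative in $\tau^\ast\tau_\ast \o M_X$'' fails precisely in the cases the proposition is designed for. The paper avoids this trap by never invoking a stalk-invariants description of $\tau_\ast$ of the pushout: it presents $N_X$ as a quotient of $(\pi^\ast N_S \times M_X)^\sim$ by $\pi^\ast M_S^{\rm gp}$, checks that $\tau_\ast$ commutes with the $\sim$-construction by a computation it can legitimately reduce to spectra of fields (where sections \emph{are} Galois invariants of the geometric stalk), and then needs only $R^1\tau_\ast \pi^\ast M_S^{\rm gp} = 0$, which is where Hilbert 90 and torsion-freeness enter as statements about actual cohomology rather than about a fictitious action on a single stalk. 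To repair your argument you would have to replace ``invariants under $G$'' by the full descent data along $\Spec \mathcal{O}_{X,\bar x}^{sh} \rightarrow \Spec \mathcal{O}_{X,x}$, involving stalks at all generizations; carrying that out essentially reconstructs the paper's exact-sequence argument. (A smaller issue: your injectivity step assumes the pushout along the submonoid $\tau^\ast\tau_\ast \o M_X \subset \o M_X$ injects into $\o N_X$, which requires knowing $\pi^\ast \o M_S \rightarrow \tau^\ast\tau_\ast \o M_X$ is still integral; this is not automatic for a submonoid and would need its own argument.)
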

\begin{proof}
	By assumption, we have $M_S = \tau^\ast M'_S$ and $M = \tau^\ast M'$ for some logarithmic strcuctures $M'_S$ on $S$ and $M'$ on $\zar(X)$.  Here $\tau^\ast$ denotes pullback of logarithmic structures.  
	
	We will begin by constructing a functor $\GS' \rightarrow \GS$.  Observe that an object of $\GS'$ is a commutative diagram, in which the square is cocartesian:

	\begin{equation*} \xymatrix{
			\pi^\ast M_S \ar[r] \ar[d] \ar@/^15pt/[rr] & \tau^\ast \tau_\ast M_X \ar[d] & M \ar[dl] \\
			\pi^\ast N_S \ar[r] & N'_X 
	} \end{equation*}
	Then composition with $\tau^\ast \tau_\ast M_X \rightarrow M_X$ induces
	\vskip1ex
	\begin{equation*} \xymatrix{
			\pi^\ast M_S \ar[r] \ar[d] \ar@/^20pt/[rrr] & \tau^\ast \tau_\ast M_X \ar[r] \ar[d] & M_X \ar[d] & M \ar[dl] \\
			\pi^\ast N_S \ar[r] & N'_X \ar[r] & N_X
	} \end{equation*}
	and omitting $\tau^\ast \tau_\ast M_X$ and $N'_X$ yields an object of $\GS$.

	Now we construct the functor $\GS \rightarrow \GS'$.  Suppose that we have an object of $\GS$:

	\begin{equation*} \xymatrix{
			\pi^\ast M_S \ar[r] \ar[d] \ar@/^15pt/[rr] & M_X \ar[d] & \tau^\ast M \ar[dl] \\
			\pi^\ast N_S \ar[r] & N_X 
	} \end{equation*}
	Applying $\tau_\ast$, this gives the following diagram:

	\begin{equation} \label{eqn:11} \vcenter{ \xymatrix{
				\pi^\ast_{\zar} M'_S \ar[r] \ar[d] \ar@/^15pt/[rr] & \tau_\ast M_X \ar[d] & M' \ar[dl] \\
				\tau_\ast \pi^\ast N_S \ar[r] & \tau_\ast N_X 
	} } \end{equation}
	Note that we have used $\tau_\ast \pi^\ast M_S = \tau_\ast \tau^\ast \pi_{\zar}^\ast M'_S = \pi_{\zar}^\ast M'_S$ and $\tau_\ast M = \tau_\ast \tau^\ast M' = M'$ by Lemma~\ref{lem:zar-pull}.  The square in the diagram above is cocartesian.  Indeed, first construct an exact sequence:
	\begin{equation*}
		0 \rightarrow \pi^\ast M_S^{\rm gp} \rightarrow (\pi^\ast N_S \times M_X)^{\sim} \rightarrow N_X \rightarrow 0
	\end{equation*}
	where $(\pi^\ast N_S \times M_S)^{\sim}$ is the smallest submonoid of $\pi^\ast N_S^{\rm gp} \times M_X^{\rm gp}$ that contains $\pi^\ast N_S \times M_X$ and the image of $\pi^\ast M_S^{\rm gp}$.  Applying $\tau_\ast$ to this gives an exact sequence:
	\begin{equation} \label{eqn:13}
		0 \rightarrow \tau_\ast \pi^\ast M_S^{\rm gp} \rightarrow \tau_\ast (\pi^\ast N_S \times M_X)^{\sim} \bigr) \rightarrow \tau_\ast N_X \rightarrow R^1 \tau_\ast \pi^\ast M_X^{\rm gp}
	\end{equation}
	The notation in the middle term is unabiguous because
	\begin{equation*}
		\bigl( \tau_\ast (\pi^\ast N_S \times M_X) \bigr)^\sim = \tau_\ast \bigl( (\pi^\ast N_S \times M_X )^\sim \bigr) 
	\end{equation*}
	via the natural map.  To see this, note first that it is sufficient to verify this at the level of characteristic monoids, since both sides are torsors under $\mathcal O_X^\ast$ over their characteristic monoids (the pushforward of a $\mathcal O_X^\ast$-torsor being a $\mathcal O_X^\ast$-torsor by Hilbert's Theorem 90).  It is also sufficient to check this on stalks, so we may assume that $X$ is the spectrum of a field, and in particular that $\o M'_S$ is constant.  An element of $\tau_\ast \bigl( (\pi^\ast \o N_S \times \o M_X )^\sim \bigr)$ is then a section of $\pi^\ast \o N_S^{\rm gp} \times \o M_X^{\rm gp}$ that can be expressed as $x - y$ for some $x \in \pi^\ast \o N_S \times \o M_X$ and $y \in \o M_S$ and is invariant under the action of the Galois group.  An element of $\bigl( \tau_\ast (\pi^\ast \o N_S \times \o M_X) \bigr)^\sim$ is of the form $x - y$ where $x$ is a Galois invariant section of $\pi^\ast \o N_S \times \o M_X$ and $y$ is a section of $\o M'_S$.  But the Galois action on $\o M_S$ is trivial since $\o M_S = \tau^\ast \o M'_S$, so $x - y$ is Galois invariant if and only if $x$ is.

	Now we show that $R^1 \tau_\ast \pi^\ast M_S^{\rm gp} = 0$.  We can verify this by passing to stalks an assume that $X$ is the spectrum of a field.  Note that $\pi^\ast M_S^{\rm gp}$ is an extension of a torsion free abelian group by $\mathcal O_X^\ast$.  We know that $R^1 \tau_\ast \mathcal O_X^\ast = 0$ by Hilbert's Theorem 90, and $R^1 \tau_\ast \pi^\ast \o M_S^{\rm gp} = 0$ because we can identify it with homomorphisms from the Galois group, which is profinite, into the discrete, torsion free abelian group $\o M_S^{\rm gp}$.

	Now the exact sequence~\eqref{eqn:13} implies that the square in diagram~\eqref{eqn:11} is cocartesian.  Applying $\tau^\ast$ to diagram~\eqref{eqn:11} we get an object of $\GS'$:
	\begin{equation*} \xymatrix{
			\pi^\ast M_S \ar[r] \ar@/^15pt/[rr] \ar[d] & \tau^\ast \tau_\ast M_X \ar[d] & M \ar[dl] \ar@/^10pt/[ddl] \\
			\tau^\ast \tau_\ast \pi^\ast N_S \ar[r] \ar[d] & \tau^\ast \tau_\ast N_X \ar[d] \\
			\pi^\ast N_S \ar[r] & N'_X
	} \end{equation*}
	Here $N'_X$ is defined to make the bottom square cartesian.  But pullback preserves cartesian diagrams, so both squares are cartesian and the outer part of the diagram is the desired object of $\GS'$.  We leave it to the reader to verify that these constructions are inverse to one another.
\end{proof}

\subsection{Left adjoint to pullback}
\label{sec:pi_0}

In this section we prove that pullback for \'etale sheaves has a left adjoint under two natural hypotheses (flatness and local finite presentation) and one apparently unnatural one (reduced geometric fibers).  When $f : X \rightarrow S$ is \'etale, the left adjoint to pullback exists for obvious reasons and is well known:  simply compose with the structure morphism of the espace \'etal\'e with $f$.  The construction in the present generality appears to be new.

Our construction is based on the following observation:  If $F$ is an \'etale sheaf on $X$, write $F^{\et}$ for its espace \'etal\'e.  When $S$ is the spectrum of a separably closed field then $f_! F$ has no choice but to be the set of connected components of $F^{\et}$.  In general, if the definition of $f_!$ is to be compatible with base change in $S$, this forces $f_! F$ to coincide with $\pi_0(F^{\et} / S)$, as defined by Laumon and Moret--Bailly~\cite[Section~(6.8)]{LMB} or Romagny~\cite{Romagny}.

The results of \cite{LMB} and \cite{Romagny} guarantee the existence of $\pi_0(F^{\et}/S)$ as long as $F^{\et}$ is flat, of finite presentation, and possesses reduced geometric fibers over $S$.%
\footnote{In fact, \cite{LMB} assumes that $X$ is smooth over $S$, but, as we will see below, only flatness is necessary in the construction.}
This suffices to treat a large enough class of \'etale sheaves to generate all others under colimits when $X$ is merely locally of finite presentation over $S$.  As $f_!$ must respect colimits where defined, we can then extend the definition to any \'etale sheaf $F$ by applying $f_!$ to a diagram of \'etale sheaves over $X$ with colimit $F$ and then taking the colimit of the resulting sheaves over $S$.

Flatness and local finite presentation appear to be natural hypotheses for the existence of $f_!$ in the sense that removing either leads immediately to counterexamples (the inclusion of a closed point or the spectrum of a local ring, respectively).  It is less clear how essential it is to require reduced geometric fibers, as our construction makes use of that hypothesis only to ensure the existence of $\pi_0(F^{\et}/S)$ as an \'etale sheaf.

Gabber has argued~\cite{Gabber} that the natural condition on $f$ under which $f^\ast$ possesses a left adjoint is, in addition to suitable finiteness conditions, that the morphism $\widetilde{X} \rightarrow \widetilde{S}$ possess connected geometric fibers whenever $\widetilde{X}$ is the strict henselization of $X$ at a geometric point $x$ and $\widetilde{S}$ is the strict henselization of $S$ at $f(x)$.

\begin{theorem} \label{thm:left-adj}
Suppose that $f : X \rightarrow S$ is a flat, local finite presentation morphism of algebraic spaces with reduced geometric fibers.  Then the functor $f^\ast : \et(S) \rightarrow \et(X)$ on \'etale sheaves has a left adjoint, $f_!$.
\end{theorem}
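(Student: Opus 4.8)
The plan is to produce the adjoint $f_!$ by sending an \'etale sheaf to the relative $\pi_0$ of its espace \'etal\'e, to check the adjunction first on a convenient generating class of sheaves, and then to extend to all sheaves by a formal colimit argument. Recall that an \'etale sheaf $F$ on $X$ is the same datum as its espace \'etal\'e $F^{\et}$, an algebraic space \'etale over $X$, and that under this dictionary $f^\ast G$ corresponds to $X \fpr_S G^{\et}$. I will call $F$ \emph{admissible} if $F^{\et}$ is flat, of finite presentation, and has reduced geometric fibers over $S$; for such $F$ the relative connected-components space $\pi_0(F^{\et}/S)$ exists as an algebraic space \'etale over $S$ by \cite[Section~(6.8)]{LMB} (whose smoothness hypothesis may be relaxed to flatness, as explained above) and \cite{Romagny}. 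On admissible sheaves I would set $f_! F = \pi_0(F^{\et}/S)$, viewed again as an \'etale sheaf on $S$.

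On admissible sheaves the adjunction is immediate from the universal property of $\pi_0$. For any \'etale sheaf $G$ on $S$, the espace-\'etal\'e dictionary gives
\begin{equation*}
\Hom_{\et(X)}(F, f^\ast G) = \Hom_X(F^{\et}, X \fpr_S G^{\et}) = \Hom_S(F^{\et}, G^{\et}),
\end{equation*}
the final set consisting of morphisms of $S$-algebraic-spaces. Since $G^{\et}$ is \'etale over $S$ and $F^{\et}$ is admissible, every such morphism factors uniquely through $F^{\et} \to \pi_0(F^{\et}/S)$, which is precisely the defining property of the relative $\pi_0$. Hence
\begin{equation*}
\Hom_S(F^{\et}, G^{\et}) = \Hom_S(\pi_0(F^{\et}/S), G^{\et}) = \Hom_{\et(S)}(f_! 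F, G),
\end{equation*}
giving $\Hom_{\et(X)}(F, f^\ast G) = \Hom_{\et(S)}(f_! F, G)$ naturally in $G$.

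To extend $f_!$ to an arbitrary \'etale sheaf, I would write $F$ as a colimit $F = \varinjlim_i h_{U_i}$ of representable sheaves, where the $U_i \to X$ are \'etale with $U_i$ affine and mapping into an affine open of $S$; such sheaves form a generating family for $\et(X)$. Each $h_{U_i}$ is admissible: the morphism $U_i \to S$ is flat and of finite presentation (being \'etale over $X$, which is flat and locally of finite presentation over $S$, with affine, hence quasi-compact and quasi-separated, source over an affine base), and its geometric fibers are reduced because \'etale morphisms preserve reducedness of fibers. This is exactly where \emph{local} finite presentation of $X/S$, rather than finite presentation, is accommodated. Setting $f_! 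F = \varinjlim_i \pi_0(U_i/S)$, cocompleteness of $\et(S)$ yields, for every $G$,
\begin{align*}
\Hom_{\et(X)}(F, f^\ast G)
&= \varprojlim_i \Hom_{\et(X)}(h_{U_i}, f^\ast G) = \varprojlim_i \Hom_{\et(S)}(f_! h_{U_i}, G) \\
&= \Hom_{\et(S)}\bigl(\varinjlim_i f_! h_{U_i},\, G\bigr),
\end{align*}
so $G \mapsto \Hom_{\et(X)}(F, f^\ast G)$ is corepresented by $f_! F$; this also shows the construction is independent of the chosen presentation, left adjoints being unique.

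The genuinely substantive step is the existence of $\pi_0(F^{\et}/S)$ as an \emph{\'etale} sheaf on $S$ together with its universal mapping property, which I would import wholesale from \cite{LMB} and \cite{Romagny}; this is where the reduced-geometric-fibers hypothesis is indispensable, since without it the relative $\pi_0$ need not be \'etale, equivalently need not define an \'etale sheaf. Once that input is secured the remainder is formal: the adjunction on admissible sheaves is just the universal property of $\pi_0$ transported through the espace-\'etal\'e dictionary, and the passage to all sheaves is the standard fact that a limit of corepresentable functors on a cocomplete category is corepresented by the colimit of the corepresenting objects. The only points demanding care are bookkeeping ones, namely checking that flatness, finite presentation, and reduced fibers propagate from $X/S$ to the chosen affine \'etale neighborhoods, and that the isomorphisms assembled on the generating family glue into a single natural isomorphism.
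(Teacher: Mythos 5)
Your proposal is correct and takes essentially the same route as the paper: both reduce to sheaves whose espace \'etal\'e is flat, of finite presentation, and has reduced geometric fibers over $S$ by observing that the domain of definition of $f_!$ is closed under colimits and that such sheaves generate $\et(X)$, and both then set $f_!F = \pi_0(F^{\et}/S)$ in that case. The only difference is packaging: where you import the existence, \'etaleness, and factorization property of the relative $\pi_0$ wholesale from \cite{LMB} and \cite{Romagny}, the paper re-derives them (the open equivalence relation $W \subset U \fpr_S U$ from \cite[Corollaire~(15.6.5)]{ega4-3}, its flat quotient, and the verification that maps to \'etale $S$-spaces factor through it), precisely so as to relax the smoothness hypothesis of \cite{LMB} to flatness --- a point your parenthetical ``as explained above'' gestures at but does not actually supply, though Romagny's theorem alone covers the flat case, so no genuine gap results.
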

\begin{proof}
In this proof we will move freely between \'etale sheaves and their espaces \'etal\'es, which are algebraic spaces that are \'etale over the base.  The \'etale site $\et(-)$ will be taken to mean the category of all \'etale \emph{algebraic spaces} over the base, so that it coincides with the category of \'etale sheaves.

For any $U \in \et(X)$, we may define a functor:
\begin{equation*}
F_U : \et(S) \rightarrow \Sets : V \mapsto \Hom_S(U,V) = \Hom_X(U, f^\ast V)
\end{equation*}
Consider the collection $\sC$ of all $U \in \et(X)$ for which $F_U$ is representable by an \'etale sheaf on $S$.  The existence of $f_!$ is equivalent to the assertion that $\sC = \et(X)$.

\begin{enumerate}[label=\textsc{Step} \arabic{*}.,ref=\textsc{Step} \arabic{*}, labelindent=0pt, leftmargin=0pt,labelsep=1em,parsep=0pt,itemindent=*,listparindent=\parindent,itemsep=5pt]
\item \label{left-adj:step:1} We observe first that $\sC$ is closed under colimits:  Suppose that $U = \varinjlim U_i$ and $F_{U_i}$ is representable by $f_! U_i$ for all $i$.  Then we may take $f_! U = \varinjlim f_! U_i$:
\begin{align*}
F_{U}(V) & 
= \Hom_{\et(X)}(\varinjlim U_i, f^\ast V) \\
& = \varprojlim \Hom_{\et(X)}(U_i, f^\ast V) \\
& = \varprojlim \Hom_{\et(S)}(f_! U_i, V) \\
& = \Hom_{\et(S)}(\varinjlim f_! U_i, V)
\end{align*}

Every \'etale algebraic space over $X$ is a colimit of \'etale algebraic spaces of finite presentation over $S$.  For example, every \'etale algebraic space over $X$ is a colimit of \'etale algebraic spaces that are affine over affine open subsets of $S$.  Therefore \ref{left-adj:step:1} implies that the construction of $f_! F$ for arbitrary \'etale sheaves $F$ on $X$ reduces to the construction for those representable by algebraic spaces of finite presentation over $S$.

\item \label{left-adj:step:2} Following the construction of $\pi_0(X/S)$ from \cite[Section~(6.8)]{LMB}, we argue next that $\sC$ contains all \'etale $U$ over $X$ that are of finite presentation over $S$.  One could also use the construction of $\pi_0(X/S)$ from \cite[Th\'eor\`eme~2.5.2~(i)]{Romagny}.

Suppose that $U$ is flat, of finite presentation, and representable by schemes over $S$.  By \cite[Corollaire~(15.6.5)]{ega4-3}, there is an open subscheme $W \subset U \fpr_S U$ such that, for each point $x$ of $U$, the open set $W \cap (\{ x \} \times U) \subset U$ is the connected component of $x$ in $U$.  Thus a field-valued point of $U \fpr_S U$ lies in $W$ if and only if its two projections to $U$ lie in the same connected component.  Thus $W \subset U \fpr_S U$ is a flat equivalence relation on $U$, hence has a quotient $\pi_0(U/S) = U/W$ that is an algebraic space over~$S$.

We verify that $U/W$ is \'etale over $S$.  It is certainly flat and locally of finite presentation since $U$ is.  It is therefore enough to verify it is formally unramified.  This condition can be verified after base change to the geometric points of $S$.  As the definition of $W$ commutes with base change, so does the quotient $U/W$.   We can therefore assume $S$ is the spectrum of a separably closed field and then $U/W = \pi_0(U/S) = \pi_0(U)$ is simply the set of connected components of $U$, which is certainly unramified over $S$.

Now we show that $\pi_0(U/S)$ represents $F_U$.  If $g : U \rightarrow V$ is a morphism from $U$ to an \'etale $S$-scheme $V$ then the pre-images of points of $V$ are open and closed in their fibers over $S$ (since $V$ has discrete fibers over $S$).  Therefore, $U \fpr_V U$ contains $W$ (viewing both as open subschemes of $U \fpr_S U$), so $f$ factors through $U/W$.
\end{enumerate}
\end{proof}

\begin{corollary} \label{cor:pi_!-pullback}
Let $f : X \rightarrow S$ be as in the statement of the theorem and let $f' : X' \rightarrow S'$ be deduced by base change via a morphism $g : S' \rightarrow S$.  Then the natural morphism $f'_! g^\ast \rightarrow g^\ast f_!$ is an isomorphism.
\end{corollary}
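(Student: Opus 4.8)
The plan is to build the base-change morphism as the mate of the unit of $f_! \dashv f^\ast$ and then to check it is an isomorphism by reducing to a generating class of \'etale sheaves, on which it becomes exactly the base-change compatibility of $\pi_0$ already established inside the proof of Theorem~\ref{thm:left-adj}. Throughout, following the statement, I write $g^\ast$ both for pullback of \'etale sheaves along $S' \to S$ and for pullback along the base-changed map $X' \to X$; these are related by the identity $g^\ast f^\ast = f'^\ast g^\ast$ of functors $\et(S) \to \et(X')$, since both composites compute pullback along the single map $X' \to S$ (factored as $X' \to X \to S$ on one side and $X' \to S' \to S$ on the other). The morphism in question is then obtained by applying $g^\ast$ to the unit $\eta : \id \to f^\ast f_!$, which gives $g^\ast \to g^\ast f^\ast f_! = f'^\ast(g^\ast f_!)$, and transposing across the adjunction $f'_! \dashv f'^\ast$ to obtain $\theta : f'_! g^\ast \to g^\ast f_!$.

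Next I would reduce to generators. Each of $f_!$, $f'_!$, and $g^\ast$ (on either site) is a left adjoint, hence preserves colimits, so both $f'_! g^\ast$ and $g^\ast f_!$ are cocontinuous in the sheaf variable. Since $\theta$ is natural, it commutes with the canonical comparison isomorphisms between these functors and colimits, so $\theta$ is an isomorphism provided $\theta_U$ is an isomorphism for $U$ ranging over a class of sheaves whose colimits exhaust $\et(X)$. By the discussion following \ref{left-adj:step:1} in the proof of Theorem~\ref{thm:left-adj}, every \'etale algebraic space over $X$ is a colimit of \'etale algebraic spaces of finite presentation over $S$ (indeed ones affine over affine opens of $S$), so it suffices to treat such $U$.

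Fix such a $U$ and set $U' = g^\ast U = U \times_X X' = U \times_S S'$. As flatness, finite presentation, and reducedness of the geometric fibers over the base are all preserved by base change, $U'$ satisfies the hypotheses of Theorem~\ref{thm:left-adj} over $S'$, and the computation in that proof gives $f_! U = \pi_0(U/S)$ and $f'_! U' = \pi_0(U'/S')$. The proof of Theorem~\ref{thm:left-adj} records that the equivalence relation $W \subset U \times_S U$ defining $\pi_0(U/S)$ commutes with base change; its base change is therefore the relation $W' \subset U' \times_{S'} U'$ defining $\pi_0(U'/S')$, and the quotients satisfy $\pi_0(U'/S') = \pi_0(U/S) \times_S S' = g^\ast f_! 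U$ canonically. It then remains to identify $\theta_U$ with this canonical isomorphism: unwinding the transposition, the adjoint of $\theta_U$ is $g^\ast \eta_U$, and $\eta_U : U \to f^\ast f_! U = \pi_0(U/S) \times_S X$ is the universal quotient map $U \to \pi_0(U/S)$ paired with the structural map $U \to X$. Since $U \to \pi_0(U/S)$ base-changes to the quotient map $U' \to \pi_0(U'/S')$, the morphism $\theta_U$ is precisely the canonical comparison, which we have just seen is an isomorphism.

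I expect the only genuinely delicate point to be this final identification of the abstractly-defined mate $\theta_U$ with the concrete base-change isomorphism of $\pi_0$; the reduction to generators and the base-change property of the quotient $U/W$ are formal, the latter being built into the construction in the proof of Theorem~\ref{thm:left-adj}.
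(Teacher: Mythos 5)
Your proof is correct and follows essentially the same route as the paper: reduce, via cocontinuity of all the functors involved, to sheaves representable by algebraic spaces of finite presentation over $S$, where both sides become relative $\pi_0$ and the comparison is the base-change compatibility built into the construction of $\pi_0(U/S)$ in Theorem~\ref{thm:left-adj}. The only differences are presentational: the paper first reduces to the case where $S'$ is a geometric point by checking on stalks, whereas you invoke the base-change compatibility of the equivalence relation $W$ (and hence of the quotient) directly, and you additionally spell out the construction of the natural map as a mate and its identification with the concrete isomorphism, which the paper leaves implicit.
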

\begin{proof}
Since a morphism of \'etale sheaves is an isomorphism if and only if it is an isomorphism on stalks, it is sufficient to verify the assertion upon base change to all geometric points of $S'$ and therefore to assume that $S'$ is itself a geometric point.  Since every \'etale sheaf is a colimit of representable \'etale sheaves that are of finite presentation over $S$ (as in the proof of Theorem~\ref{thm:left-adj}), it is sufficient to show that
\begin{equation*}
f'_! g^\ast F \rightarrow g^\ast f_! F
\end{equation*}
when $F$ is representable by a scheme that is of finite presentation over $S$.  In that case, $f_! F = \pi_0(F^{\et}/S)$ and $f'_! g^\ast F = \pi_0(F^{\et} \fpr_S S' /S')$.  But the fiber of $\pi_0(F^{\et}/S)$ over $S'$ is $\pi_0(F^{\et} \fpr_S S' / S')$ by definition!
\end{proof}

The following proposition is well-known and included only for completeness.

\begin{proposition}
Let $X$ be a site.  The inclusion of sheaves of abelian groups (resp.\ sheaves of commutative monoids) on $X$ in sheaves of sets on $X$ admits a left adjoint $F \mapsto \mathbf{Z} F$ (resp.\ $F \mapsto \mathbf{N} F$).
\end{proposition}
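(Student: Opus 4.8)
The plan is to construct the left adjoint in two stages, first on presheaves and then by sheafifying, exactly as one builds the free sheaf of abelian groups (resp.\ commutative monoids) on a sheaf of sets. I treat the abelian case; the monoid case is word-for-word identical with the free commutative monoid $\mathbf{N}[-]$ replacing the free abelian group $\mathbf{Z}[-]$.

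First I would recall the classical presheaf-level statement. The forgetful functor from presheaves of abelian groups on $X$ to presheaves of sets admits a left adjoint $\mathbf{Z}^p$, computed sectionwise by $(\mathbf{Z}^p P)(U) = \mathbf{Z}[P(U)]$, the free abelian group on the set $P(U)$, with the evident restriction maps. The adjunction is immediate from the universal property of the free abelian group applied objectwise and naturally in $U$. I would then define the candidate left adjoint on sheaves by $\mathbf{Z} F := a(\mathbf{Z}^p F)$, where $a$ denotes sheafification of presheaves of abelian groups and $F$ is regarded as a presheaf of sets via the inclusion of sheaves into presheaves.

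To verify the adjunction, let $F$ be a sheaf of sets and $G$ a sheaf of abelian groups. I would assemble the chain of natural bijections
\begin{align*}
\Hom(a(\mathbf{Z}^p F), G)
&\cong \Hom(\mathbf{Z}^p F, G) \\
&\cong \Hom(F, G) \\
&\cong \Hom(F, G),
\end{align*}
where the successive $\Hom$'s are taken, respectively, in sheaves of abelian groups, presheaves of abelian groups, presheaves of sets, and sheaves of sets (with $G$ read through the appropriate inclusion or forgetful functor in the middle two terms). The first bijection is the sheafification--inclusion adjunction, using that $G$ is already a sheaf; the second is the presheaf-level adjunction above, applied to the presheaf of abelian groups underlying $G$; and the third uses that the underlying presheaf of sets of $G$ is again a sheaf of sets, together with the full faithfulness of the inclusion of sheaves of sets into presheaves of sets. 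This exhibits $\mathbf{Z} = a \circ \mathbf{Z}^p \circ (\text{inclusion})$ as left adjoint to the forgetful functor.

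The construction is entirely formal once sheafification is available, so there is no genuine obstacle; the only points deserving a remark are that sheafification of sheaves of abelian groups (resp.\ commutative monoids) exists on the site $X$, which is standard, and that the forgetful functor to sets carries a sheaf of abelian groups to a sheaf of sets. The latter holds because the sheaf condition is an equalizer (a finite limit) and the forgetful functor creates limits, so the underlying presheaf of sets of a sheaf of abelian groups satisfies the sheaf axiom. Notably, the chain above never requires sheafification to commute with the forgetful functor, which keeps the argument clean.
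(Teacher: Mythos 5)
Your proof is correct, but it takes a genuinely different route from the paper. You construct the adjoint explicitly: take the sectionwise free abelian group (resp.\ free commutative monoid) presheaf and sheafify, then verify the adjunction by chaining three standard adjunctions (sheafification--inclusion for abelian presheaves, the objectwise free--forgetful adjunction, and full faithfulness of sheaves inside presheaves of sets). The paper instead proceeds non-constructively via the adjoint functor theorem: it considers the category of pairs $(G,\varphi)$ with $G$ a sheaf of abelian groups and $\varphi : F \rightarrow G$ a map of sheaves of sets, checks closure under small limits, and exhibits an essentially small coinitial subcategory by bounding the cardinalities of subsheaves generated by $\varphi(F)$ on a set of topological generators. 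Your approach buys an explicit description of $\mathbf{Z}F$ as $a(\mathbf{Z}^p F)$, at the cost of invoking the existence of sheafification for presheaves of abelian groups and monoids (standard, but a nontrivial input); the paper's approach avoids any construction and any mention of sheafification, at the cost of the solution-set bookkeeping. Both require essentially the same smallness hypothesis on the site (a set of topological generators). Your closing remarks are well taken: isolating that the underlying presheaf of sets of a sheaf of groups is a sheaf (limits are created by the forgetful functor), and that no compatibility of sheafification with the forgetful functor is needed, are exactly the points where a sloppier version of this argument would go wrong.
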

\begin{proof}
The proofs for abelian groups and for commutative monoids are identical, so we only write the proof explicitly for abelian groups.

It is equivalent to demonstrate that, for any sheaf of sets $F$ on $X$ there is an initial pair $(G, \varphi)$ where $G$ is a sheaf of abelian groups and $\varphi : F \rightarrow G$ is a morphism of sheaves of sets.  Denote the category of pairs $(G, \varphi)$ by $\sC$.  By the adjoint functor theorem, $\sC$ has an initial object if it is closed under small limits and has an essentially small coinitial subcategory \cite[Theorem~X.2.1]{cftwm}.  Closure under small limits is immediate.  

For the essentially small coinitial subcategory, take the collection $\mathscr{C}_0$ of all $(G, \varphi)$ such that $\varphi(F)$ generates $G$ as a sheaf of abelian groups (i.e., the smallest subsheaf of abelian groups $G' \subset G$ that contains $\varphi(F)$ is $G$ itself).  The cardinalities of $G'(U)$ for all $U$ in a set of topological generators may be bounded in terms of the cardinalities of the $F(U)$.  It follows that $\mathscr{C}_0$ is essentially small and by the adjoint functor theorem that the inclusion of sheaves of abelian groups in sheaves of sets has a left adjoint.
\end{proof}

\begin{proposition}
	Let $f : X \rightarrow S$ be flat and locally of finite presentation with reduced geometric fibers.  The functor $f^\ast$ on sheaves of abelian groups (resp.\ sheaves of commutative monoids) has a left adjoint, denoted $f_!$.
\end{proposition}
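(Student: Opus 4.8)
The plan is to bootstrap from the set-level left adjoint of Theorem~\ref{thm:left-adj} using the free abelian group (resp.\ commutative monoid) functor of the preceding proposition. I will treat only abelian groups; the argument for monoids is identical. Write $U$ for the forgetful functor from sheaves of abelian groups to sheaves of sets (on either $X$ or $S$) and $\mathbf{Z}$ for its left adjoint. The first point to record is that $f^\ast$ commutes with $U$: since $f^\ast$ preserves finite products, it carries abelian group objects to abelian group objects, and the underlying sheaf of sets of $f^\ast G$ is $f^\ast U G$. Thus there is a commuting square of right adjoints $U \circ f^\ast = f^\ast \circ U$ of functors $\et(S)$-abelian-groups $\rightarrow \et(X)$-sets, in which both vertical forgetful functors are monadic and the lower horizontal $f^\ast$ on sheaves of sets carries the left adjoint $f_!$ supplied by Theorem~\ref{thm:left-adj}.

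It is tempting to set $f_! = \mathbf{Z} \circ f_! \circ U$, but this fails: the composite is left adjoint to $U \circ f^\ast = f^\ast \circ U$, so it would compute morphisms of \emph{sheaves of sets} into $f^\ast H$ rather than homomorphisms. The correct construction exploits that a left adjoint, where it exists, preserves colimits and is forced on free objects. Unwinding the three adjunctions gives, for any sheaf of sets $F$ on $X$ and any sheaf of abelian groups $H$ on $S$, a natural chain of bijections
\begin{equation*}
\Hom(\mathbf{Z} f_! F, H) = \Hom(f_! F, U H) = \Hom(F, f^\ast U H) = \Hom(F, U f^\ast H) = \Hom(\mathbf{Z} F, f^\ast H),
\end{equation*}
so any left adjoint must satisfy $f_!(\mathbf{Z} F) = \mathbf{Z} f_! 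F$. Every sheaf of abelian groups $G$ is the (reflexive) coequalizer of the free sheaves $\mathbf{Z} U \mathbf{Z} U G \rightrightarrows \mathbf{Z} U G$, so I would \emph{define} $f_! G$ to be the coequalizer, in sheaves of abelian groups on $S$, of
\begin{equation*}
\mathbf{Z} f_! U \mathbf{Z} U G \rightrightarrows \mathbf{Z} f_! U G ,
\end{equation*}
where the inner $f_!$ is the set-level functor of Theorem~\ref{thm:left-adj}; this is the result of applying the formula $f_! \mathbf{Z} = \mathbf{Z} f_!$ termwise to the canonical presentation of $G$. Equivalently, one may simply invoke the Adjoint Lifting Theorem: a commuting square of monadic functors whose base functor admits a left adjoint lifts to a left adjoint on top, provided the category receiving the lifted adjoint has reflexive coequalizers, which the category of sheaves of abelian groups (resp.\ monoids) certainly does.

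The remaining work — which I expect to be the only genuinely fiddly part — is to check that this coequalizer really represents $H \mapsto \Hom(G, f^\ast H)$. This follows because both $\mathbf{Z}$ and the set-level $f_!$ preserve colimits, so applying $\Hom(-, H)$ to the defining coequalizer and using the free-object identity of the displayed bijections turns it into the equalizer computing $\Hom(G, f^\ast H)$ from the bar presentation of $G$; naturality in $H$ and functoriality in $G$ are then formal. I would emphasize that the hypotheses of flatness, local finite presentation, and reduced geometric fibers enter \emph{only} through Theorem~\ref{thm:left-adj}: once $f_!$ is available on sheaves of sets, its extension to sheaves of abelian groups and of commutative monoids is purely formal.
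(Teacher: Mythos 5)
Your proof is correct, and it turns on the same key identity as the paper's: since left adjoints compose, $f_!$ is forced to satisfy $f_!(\mathbf{Z}F) = \mathbf{Z}f_!(F)$, and the geometric hypotheses (flat, locally of finite presentation, reduced geometric fibers) enter only through the set-level adjoint of Theorem~\ref{thm:left-adj}. Where you differ is in how you pass from free sheaves to all sheaves. The paper reruns the bootstrap of its set-level proof verbatim: the class of abelian sheaves for which $f_!$ exists is closed under colimits, it contains $\mathbf{Z}U$ for every representable \'etale sheaf $U$, and every sheaf of abelian groups is a colimit of a diagram of such objects. You instead use the canonical reflexive-coequalizer (bar) presentation $\mathbf{Z}U\mathbf{Z}UG \rightrightarrows \mathbf{Z}UG \rightarrow G$ furnished by monadicity of the free--forgetful adjunction, define $f_!G$ as the induced coequalizer, and check the universal property by turning the coequalizer into an equalizer under $\Hom(-,H)$; alternatively you invoke the Adjoint Lifting Theorem. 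The trade-off: the paper's route is shorter and self-contained, though it quietly uses that free sheaves on representables generate all abelian sheaves under colimits; yours gives a uniform explicit formula for $f_!G$, makes completely transparent that everything beyond Theorem~\ref{thm:left-adj} is pure category theory, and the lifting-theorem version is off-the-shelf machinery that works identically for monoids. One point worth making explicit in your coequalizer construction: of the two parallel arrows in the bar presentation, the counit $\mathbf{Z}U\mathbf{Z}UG \rightarrow \mathbf{Z}UG$ is \emph{not} $\mathbf{Z}$ of a map of sheaves of sets, so ``applying $f_!\mathbf{Z} = \mathbf{Z}f_!$ termwise'' presupposes that $\mathbf{Z}F \mapsto \mathbf{Z}f_!F$ is functorial for \emph{all} homomorphisms between free sheaves; this follows by Yoneda from the representability isomorphism $\Hom(\mathbf{Z}f_!F,H) \cong \Hom(\mathbf{Z}F,f^\ast H)$, natural in $H$, and is exactly the naturality your verification step already uses---so it costs one sentence, and is in any case subsumed by the Adjoint Lifting Theorem alternative.
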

\begin{proof}
The proof is the same as the proof of the theorem above.  We recognize that the class of sheaves of abelian groups (resp.\ sheaves of commutative monoids) $F$ for which $f_! F$ exists is closed under colimits.  It contains $\mathbf{Z} U$ (resp.\ $\mathbf{N} U$) for all \'etale $U$ over $X$ since we may take
\begin{equation*}
f_! (\mathbf{Z} U) = \mathbf{Z} f_!(U) \qquad \text{(resp.\ } f_!(\mathbf{N} U) = \mathbf{N} f_!(U) \text{)}.
\end{equation*}
Finally, all sheaves of abelian groups are colimits of diagrams of $\mathbf{Z} U$ (resp.\ $\mathbf{N}U$) as above, so $f_!$ is defined for all sheaves of abelian groups on $X$.
\end{proof}

\subsubsection*{Zariski sheaves}

We include a statement about the left adjoint to pullback on sheaves in the Zariski topology in a restricted situation when it agrees with the left adjoint on \'etale sheaves.  In practice, this can be used to compute the left adjoint on \'etale sheaves by working in the Zariski topology, as in Appendix~\ref{app:calc}.

\begin{proposition} \label{prop:zar-push}
	Let $S$ be the spectrum of an algebraically closed field and let $f : X \rightarrow S$ be a reduced, finite type scheme over $S$.  Then $f^\ast : \zar(S) \rightarrow \zar(X)$ has a left adjoint, given by $f_! \tau^\ast$, where $f_!$ denotes the left adjoint on \'etale sheaves.
\end{proposition}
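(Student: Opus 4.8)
The plan is to establish the adjunction directly, as a short chain of natural bijections built from Lemma~\ref{lem:zar-pull} and the \'etale left adjoint $f_!$ furnished by Theorem~\ref{thm:left-adj}. First I would check that Theorem~\ref{thm:left-adj} applies: since $S$ is the spectrum of a field, $f$ is automatically flat; being of finite type over the Noetherian scheme $S$ it is of finite presentation; and its unique geometric fiber is $X$ itself (the residue field is already algebraically closed), which is reduced by hypothesis. Hence $f_! : \et(X) \rightarrow \et(S)$ exists and is left adjoint to $f^\ast : \et(S) \rightarrow \et(X)$. I would also record that, because the residue field of $S$ is algebraically closed, the change-of-topology functor on $S$ is an equivalence and $\et(S) = \zar(S) = \Sets$; in particular $f_! \tau^\ast$ really does take values in $\zar(S)$, as the statement requires.

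The core of the proof is then the following computation, natural in a sheaf $F$ on $\zar(X)$ and a sheaf $W$ on $\zar(S)$:
\begin{align*}
\Hom_{\zar(S)}(f_! \tau^\ast F, W)
&= \Hom_{\et(S)}(f_! \tau^\ast F, W) \\
&= \Hom_{\et(X)}(\tau^\ast F, f_{\et}^\ast W) \\
&= \Hom_{\et(X)}(\tau^\ast F, \tau^\ast f_{\zar}^\ast W) \\
&= \Hom_{\zar(X)}(F, f_{\zar}^\ast W) .
\end{align*}
The first equality is the identification $\et(S) = \zar(S)$, the second is the adjunction of Theorem~\ref{thm:left-adj}, and the last is Lemma~\ref{lem:zar-pull} applied with the second sheaf taken to be $f_{\zar}^\ast W$. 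Reading off the outer terms, this exhibits $f_! \tau^\ast$ as the desired left adjoint of $f^\ast : \zar(S) \rightarrow \zar(X)$.

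The one step carrying genuine content --- and the part I expect to be the main obstacle --- is the third equality, the compatibility $f_{\et}^\ast W \simeq \tau^\ast f_{\zar}^\ast W$ of the two pullbacks of a sheaf from $S$ to $\et(X)$. I would deduce it from the commuting square of morphisms of sites relating the \'etale and Zariski topologies on $X$ and on $S$: the composite $\zar(S) \rightarrow \et(S) \rightarrow \et(X)$ agrees with $\zar(S) \rightarrow \zar(X) \rightarrow \et(X)$, since both compute pullback along the same underlying composite of sites. Here the algebraically closed hypothesis enters a second time, collapsing the change-of-topology functor on $S$ to the identity so that no discrepancy between the two routes survives. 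Once this identification is in hand the remaining equalities are formal, and naturality in $F$ and $W$ is visible step by step.
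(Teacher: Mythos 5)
Your proof is correct and follows essentially the same route as the paper: the paper's own argument is exactly the chain $\Hom(f_!\tau^\ast F, G) = \Hom(\tau^\ast F, f_{\et}^\ast G) = \Hom(\tau^\ast F, \tau^\ast f_{\zar}^\ast G) = \Hom(F, f_{\zar}^\ast G)$, using the \'etale adjunction and Lemma~\ref{lem:zar-pull}. The only difference is that you spell out what the paper leaves implicit --- the verification of the hypotheses of Theorem~\ref{thm:left-adj}, the identification $\et(S) = \zar(S)$ over an algebraically closed field, and the compatibility $f_{\et}^\ast W \simeq \tau^\ast f_{\zar}^\ast W$ of pullbacks --- all of which you justify correctly.
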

\begin{proof}
	As in Lemma~\ref{lem:zar-pull}, we write $\tau$ for the morphism from the \'etale site to the Zariski site.  We have 
	\begin{multline*}
		\Hom(f_! \tau^\ast F, G) = \Hom(\tau^\ast F, f^\ast_{\et} G) \\ = \Hom(\tau^\ast F, \tau^\ast f^\ast_{\zar} G) = \Hom(F, f^\ast_{\zar} G)
	\end{multline*}
	as required.
\end{proof}

\subsection{Reduction to the local problem}
\label{sec:reduction}

It is sufficient to construct minimal objects in $\GS(S)$.  When $(N_S,\varphi)$ is an object of $\GS(\u S)$, the pair $(\pi^\ast N_S,\varphi)$ is an object of $\GS^{\rm loc}(\u X)$.  This determines a functor $\GS(\u S) \rightarrow \GS^{\rm loc}(\u X)$, and when $\pi : \u X \rightarrow \u S$ is an isomorphism, this functor induces an equivalence between $\GS(\u S)$ and $\GS^{\rm loc}(\u X)$.

Before giving the proof Lemma~\ref{lem:minimal-existence}, we explain the construction.  In order to minimize notation, we construct minimal objects of $\GS(\u S)$; the same construction applies to $\GS(\u T)$ for any $\u S$-scheme $\u T$ after pulling back the relevant data.

We suppose that $(N_S,\varphi)$ is an object of $\GS(\u S)$ and we construct a pair $(Q_S,\psi)$ in $\GS(\u S)$ and a morphism $(Q_S,\psi) \rightarrow (N_S,\varphi)$.  Then we show $(Q_S,\psi)$ is minimal in $\GS(\u S)$ and that the construction of $(Q_S,\psi)$ is stable under pullback.

Let $(R,\rho) \rightarrow (\pi^\ast N_S,\varphi)$ be a morphism from a minimal object of $\GS^{\rm loc}(\u X)$, as guaranteed by Lemma~\ref{lem:local-existence}.  

\textsc{Construction of $Q_S$.}  Define $\o Q_S$ to be the pushout of the left half of the diagram of \'etale sheaves of monoids below:
\begin{equation} \label{eqn:1} \vcenter{\xymatrix{
\pi_! \pi^\ast \o M_S \ar[r] \ar[d] & \pi_! \o R \ar[r] \ar[d] & \pi_! \pi^\ast \o N_S \ar[d] &  \\
\o M_S \ar[r] \ar@/_15pt/[rr] & \o Q_S \ar@{-->}[r] & \o N_S
}} \end{equation}
The commutativity of the diagram of solid lines and the universal property of pushout induce a morphism $\o Q_S \rightarrow \o N_S$, as shown above.  Pulling back $N_S$ via this map gives a quasi-logarithmic structure $\o Q_S$ with characteristic monoid $\o Q_S$. 

\begin{remark}%
The construction of $\o Q_S$ can be simplified when $\pi  : X \rightarrow S$ has connected fibers.  Under that hypothesis, the counit $\pi_! \pi^\ast \o M_S \rightarrow \o M_S$ is an isomorphism and $\o Q_S = \pi_! \o R$.
\end{remark}
 
\textsc{Construction of $\psi$.}
Pulling diagram~\eqref{eqn:1} back to $X$ we get a commutative diagram of \'etale sheaves of monoids:
\begin{equation} \label{eqn:8} \vcenter{\xymatrix{
\pi^\ast \o M_S \ar[r] \ar[d] \ar@/_30pt/[dd]_(0.3){\id} & \o R \ar[r] \ar[d] & \pi^\ast \o N_S \ar@/^30pt/[dd]^(0.7){\id} \ar[d] \\
\pi^\ast \pi_! \pi^\ast \o M_S \ar[r] \ar[d] & \pi^\ast \pi_! \o R_S \ar[r] \ar[d] & \pi^\ast \pi_! \pi^\ast \o N_S \ar[d] \\
\pi^\ast \o M_S \ar[r] & \pi^\ast \o Q_S \ar[r] & \pi^\ast \o N_S
}} \end{equation}
The vertical arrows on the left and right sides of the diagram compose to identities because $\pi_!$ and $\pi^\ast$ are adjoint functors.  We isolate the commutative diagram 
\begin{equation*} \xymatrix{
\pi^\ast \o M_S \ar[r] \ar[dr] & \o R \ar[d] \ar[r] & \pi^\ast \o N_S \\
& \pi^\ast \o Q_S \ar[ur]
} \end{equation*}
and push it out via $\pi^\ast \o M_S \rightarrow \o M_X$ to obtain the lower half of the diagram below:
\begin{equation} \label{eqn:9} \vcenter{ \xymatrix{
& \o M \ar[d]_{\o\rho} \ar[dr]^{\o\varphi} \\
\o M_X \ar[dr] \ar[r] & \o R_X \ar[r] \ar[d] & \o N_X \\
& \o Q_X \ar[ur]
}} \end{equation}
The upper half of the diagram is provided by the morphism $(R, \rho) \rightarrow (\pi^\ast N_S,\psi)$ of $\GS^{\rm loc}(\o X)$.  By composing the vertical arrows in the center of the diagram, we obtain the definition of $\o\psi$:
\begin{equation*}
\o\psi : \o M \xrightarrow{\o\rho} \o R_X \rightarrow \o Q_X 
\end{equation*} 
Now observe that $Q_X$ is the pullback of $N_X$ via the map $\o Q_X \rightarrow \o N_X$.  The commutative triangle
\begin{equation*} \xymatrix{
\o M \ar[d]_{\o\psi} \ar[dr]^{\o\varphi} \\
\o Q_X \ar[r] & \o N_X
} \end{equation*}
yields a commutative triangle
\begin{equation*} \xymatrix{
M \ar@{-->}[d]_{\psi} \ar[dr]^{\varphi} \\
Q_X \ar[r] & N_X ,
} \end{equation*}
by the universal property of the fiber product.

\begin{proof}[Proof of Lemma~\ref{lem:minimal-existence}]
We check that the object $(Q_S,\psi)$ constructed above satisfies the universal property of a minimal object of $\GS(\u S)$.

Consider a morphism $(N'_S,\varphi') \rightarrow (N_S,\varphi)$ of $\GS(\u S)$.  We must show that there is a unique map $(Q_S,\psi) \rightarrow (N'_S,\varphi')$ rendering the triangle below commutative:
\begin{equation} \label{eqn:2} \vcenter{\xymatrix{
(Q_S,\psi) \ar@{-->}[r] \ar[dr] & (N'_S,\varphi') \ar[d] \\
 & (N_S,\varphi)
}} \end{equation}
To specify the dashed arrow above we must give a factorization of $Q_S \rightarrow N_S$ through $N'_S$ and show the induced triangle
\begin{equation} \label{eqn:3} \vcenter{\xymatrix{
& M \ar[dl]_{\psi} \ar[d]^{\varphi'} \\
Q_X \ar[r] & N'_X
}} \end{equation}
is commutative.  

The dashed arrow in diagram~\eqref{eqn:2} and the commutativity of diagram~\eqref{eqn:3} are both determined at the level of characteristic monoids.  That is, to give a factorization of $Q_S \rightarrow N_S$ through $N'_S$ is the same as to give a factorization of $\o Q_S \rightarrow \o N_S$ through $\o N'_S$ since the logarithmic structure $N'_S$ is pulled back from $N_S$ via $\o N'_S \rightarrow \o N_S$.  Similarly, to verify the commutativity of diagram~\eqref{eqn:3} it is sufficient to show that the induced diagram of characteristic monoids commutes.

By the definition of $\o Q_S$ as a pushout in diagram~\eqref{eqn:1}, to give $\o Q_S \rightarrow \o N'_S$ compatible with the tacit maps from $\o M_S$ is the same as to give $\pi_! \o R \rightarrow \o N'_S$ compatible with the maps from $\pi_! \pi^\ast \o M_S$.  The latter is equivalent, by adjunction, to giving $\o R \rightarrow \pi^\ast \o N'_S$, compatible with the maps from $\pi^\ast \o M_S$.  But by the minimality of $(R,\rho)$ in $\GS^{\rm loc}(\u X)$, there is a unique factorization of $R \rightarrow \pi^\ast N_S$ through $\pi^\ast N'_S$ such that the induced triangle depicted in the upper half of the diagram below is commutative.
\begin{equation*} \xymatrix{
M \ar[d]^{\rho} \ar[dr]^{\varphi'} \ar@/_15pt/[dd]_{\psi} \\
R_X \ar[r] \ar[d] & N'_X \\
Q_X \ar[ur]
} \end{equation*}
The lower triangle in the diagram is also commutative:  As already noted, the map $R \rightarrow \pi^\ast N'_S$ gives a factorization of $\pi_! \o R \rightarrow \o N'_S$ through $\o Q_S$, so by adjunction we get a factorization of $\o R \rightarrow \pi^\ast \o N'_S$ through $\pi^\ast \o Q_S$, and therefore a factorization $R \rightarrow \pi^\ast \o Q_S \rightarrow \o N'_S$.  The lower triangle is the pushout of this sequence via $R \rightarrow R_X$.  The outer triangle, which coincides with diagram~\eqref{eqn:3}, is therefore commutative, as desired.
\end{proof}

\begin{proof}[Proof of Lemma~\ref{lem:minimal-pullback}]
It is sufficient to treat the case where $T = S$.  Suppose, then, that $f : \u S' \rightarrow \u S$ is a morphism of schemes and set $\u X' = \u X \fpr_{\u S} \u S'$.  We show that $(f^\ast Q_S, f^\ast \psi)$ is a minimal object of $\GS(S')$.  We verify that all of the data that go into the construction of $Q_S$ are compatible with pullback:
\begin{enumerate}
\item the minimal object $(R,\rho)$ of $\GS^{\rm loc}(\u X)$ pulls back to a minimal object of $\GS^{\rm loc}(X')$ by Lemma~\ref{lem:local-pullback}; 
\item the formation of $\pi_! \o R$ is compatible with pullback by Corollary~\ref{cor:pi_!-pullback};
\item the pushout $\o Q_S$ is compatible with pullback by the preservation of colimits under pullback of sheaves;
\item the quasi-logarithmic structure $Q_S$ is formed as a fiber product of sheaves and these are preserved by pullback.
\end{enumerate}
This shows that the construction of $Q_S$ is compatible with pullback.  We make a similar verification for $\psi$:
\begin{enumerate}[resume*]
\item The compatibility of $\pi_!$ with $f^\ast$ (Corollary~\ref{cor:pi_!-pullback}) guarantees that diagram~\eqref{eqn:8} pulls back to its analogue on $X'$;
\item compatibility of pullback of sheaves with colimits guarantees the compatibility of the lower half of diagram~\eqref{eqn:9} with pullback;
\item Lemma~\ref{lem:local-pullback} and the assumption that $(R,\rho)$ be minimal in $\GS^{\rm loc}(\u X)$ guarantee that the pullback of the upper half of diagram~\eqref{eqn:9} coincides with its analogue constructed in $\GS^{\rm loc}(X')$.
\end{enumerate}
This shows that the construction of $\psi$ is compatible with pullback and completes the proof.
\end{proof}

\section{Automorphisms of minimal logarithmic structures}
\label{sec:automorphisms}

\begin{lemma}
Suppose that $(R, \rho)$ is a minimal object of $\GS^{\rm loc}(\u X)$.  Then the automorphism group of $(R,\rho)$ is trivial.
\end{lemma}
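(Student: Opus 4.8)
The plan is to separate an automorphism into its effect on the characteristic monoid $\o R$ and its effect on the $\mathcal{O}_X^\ast$-torsor sitting above $\o R$, and to force each layer to be trivial in turn. So suppose $g \colon (R,\rho) \to (R,\rho)$ is an automorphism in $\GS^{\rm loc}(\u X)$. By the definition of a morphism of quasi-logarithmic structures, $g$ is compatible with $\log \colon \mathcal{O}_X^\ast \to R$ and with the tacit map $\pi^\ast M_S \to R$, and by the definition of a morphism in $\GS^{\rm loc}$ it satisfies $g_X \circ \rho = \rho$, where $g_X$ denotes the pushout of $g$ along $\pi^\ast M_S \to M_X$ (Convention~\ref{conv:pushout}). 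The characteristic-monoid layer is formal, while the torsor layer is where the explicit construction of the minimal object must be used.

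First I would dispose of the characteristic level. Passing to characteristic monoids, $g$ induces $\o g \colon \o R \to \o R$ compatible with the tacit map from $\pi^\ast \o M_S$ and satisfying $\o g_X \circ \o\rho = \o\rho$; that is, $\o g$ is precisely a morphism $(\o R, \o\rho) \to (\o R, \o\rho)$ in the sense of Lemma~\ref{lem:initial-characteristic}. Since $(R,\rho)$ has a well-defined type $u$ which is preserved by $g$, the uniqueness assertion of Lemma~\ref{lem:initial-characteristic}, applied with $(\o N, \o\varphi) = (\o R, \o\rho)$, shows that there is only one such morphism; as the identity qualifies, $\o g = \id_{\o R}$.

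Knowing $\o g = \id$, the map $g$ fixes both $\o R$ and $\mathcal{O}_X^\ast$, so it is an automorphism of the extension $0 \to \mathcal{O}_X^\ast \to R \to \o R \to 0$ inducing the identity on the two ends. The difference $g(r) - r$ then lies in $\log \mathcal{O}_X^\ast$ and depends only on the image $\bar r$ of $r$ in $\o R$, producing a homomorphism $\delta \colon \o R^{\rm gp} \to \mathcal{O}_X^\ast$ with $g(r) = r + \log \delta(\bar r)$, from which $g$ is recovered; it therefore remains to prove $\delta = 1$. Compatibility of $g$ with the tacit map $\pi^\ast M_S \to R$ forces $\delta$ to be trivial on the image of $\pi^\ast \o M_S$. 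For the remaining generators I would use $\rho$: the pushout $g_X$ corresponds to the homomorphism $\delta_X \colon \o R_X^{\rm gp} \to \mathcal{O}_X^\ast$ restricting to $\delta$ on $\o R^{\rm gp}$ and to the trivial homomorphism on $\o M_X^{\rm gp}$ (these agree on $\pi^\ast \o M_S^{\rm gp}$, where both are trivial), and since $\mathcal{O}_X^\ast$ acts freely on the torsor $R_X$, the relation $g_X \circ \rho = \rho$ becomes $\delta_X(\o\rho(\xi)) = 1$ for every local section $\xi$ of $\o M$.

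Finally I would feed in the explicit construction of $\o R$. For each generator $\xi$ of $\o M$ the minimal object was built by representing $\o\rho(\xi)$ as a pair $(a,b)$ with $a \in \o R^{\rm gp}$ and $b \in \o M_X$ and then declaring $a$ to belong to $\o R$; these elements $a$, together with the image of $\pi^\ast \o M_S$, generate $\o R$ as a monoid, hence $\o R^{\rm gp}$ as a group. Because $\delta_X$ kills the $\o M_X$-component $b$, the identity $\delta_X(\o\rho(\xi)) = 1$ reads simply $\delta(a) = 1$. Thus $\delta$ vanishes on a generating set of $\o R^{\rm gp}$, so $\delta = 1$ and $g = \id$, as desired. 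I expect the penultimate step to be the main obstacle: one must match the pushout homomorphism $\delta_X$ against the pair representation $(a,b)$ of $\o\rho(\xi)$ carefully enough that the $\rho$-compatibility can be read off as a condition on $\delta$ alone, whereas the rigidity at the characteristic level is comparatively routine once Lemma~\ref{lem:initial-characteristic} is in hand.
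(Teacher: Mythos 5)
Your proof is correct and is essentially the paper's: the same two-layer decomposition (characteristic monoid first, then the $\mathcal{O}_X^\ast$-character), the same use of the uniqueness in Lemma~\ref{lem:initial-characteristic} to force $\o g = \id$, and the same mechanism for the torsor layer — the pushout character $\delta_X$ is trivial on $\o M_X$ and restricts to $\delta$ on $\o R^{\rm gp}$, so compatibility with $\rho$ kills $\delta$ on the image of $\o\rho$. The only divergence is the endgame: the paper evaluates against the defining relation $\o\rho \circ \o r = q - i \circ \epsilon$ and uses surjectivity of $\epsilon : \o R_0^{\rm gp} \to \o R^{\rm gp}$ to conclude $\lambda = 1$ on all of $\o R^{\rm gp}$, whereas you evaluate $\delta_X$ on the pair representations $(a,b)$ of $\o\rho(\xi)$ and use that the elements $a$ generate $\o R$. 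One caveat in that step: the parenthetical ``hence $\o R^{\rm gp}$ as a group'' is not automatic, because in the paper's construction $\o R^{\rm gp}$ is defined \emph{first} (as $\o R_0^{\rm gp}$ modulo the diagonal copy of $\pi^\ast \o M_S^{\rm gp}$) and $\o R$ is only afterwards carved out inside it, so it is not tautological that the submonoid $\o R$ generates that ambient group. But nothing is lost: since $g(r) = r + \log\delta(\o r)$ with $\o r \in \o R$, triviality of $\delta$ on the monoid generators of $\o R$ — which is exactly what your computation $\delta(a)=1$ together with triviality on the image of $\pi^\ast \o M_S$ provides — already yields $g = \id$.
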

\begin{proof}
It is equivalent to show that only the identity automorphism of $R^{\rm gp}$ is compatible with both the inclusion of $\pi^\ast M_S^{\rm gp}$ and the map $\rho : M^{\rm gp} \rightarrow R_X^{\rm gp}$.  Suppose that $\alpha$ is an automorphism of $(R,\rho)$.  Then the induced morphism $\o\alpha : \o R \rightarrow \o R$ is the identity, by Lemma~\ref{lem:initial-characteristic}.

We use this to conclude that $\alpha = \id$.  As $\o\alpha = \id$, the automorphism $\alpha$ is determined by a homomorphism $\lambda : \o R^{\rm gp} \rightarrow \mathcal{O}_X^\ast$ with
\begin{equation*}
\alpha(x) = x + \log \lambda(\o x)
\end{equation*}
and $\o x$ denoting the image of $x$ under the projection $R \rightarrow \o R$.  By assumption, $\alpha$ restricts to the identity on $\pi^\ast \o M_S^{\rm gp} \subset \o R^{\rm gp}$ so $\lambda$ restricts to the trivial homomorphism on $\pi^\ast \o M_S^{\rm gp}$.  Therefore, it must factor through the quotient $\o M^{\rm gp} / \pi^\ast \o M_S^{\rm gp}$ of $\o R^{\rm gp}$ by $\pi^\ast \o M_S^{\rm gp}$.

Let $\alpha_X : R_X \rightarrow R_X$ denote the automorphism induced by pushout of $\alpha$. We investigate the condition that $\alpha_X$ commute with $\rho$ in terms of $\lambda$ and show this implies $\lambda = 1$.  Define $\lambda_X$ by the formula $\log \lambda_X = \alpha_X - \id$ and note that $\exp \lambda_X : \o R_X^{\rm gp} \rightarrow \mathcal{O}_X^\ast$ is induced by pushout from the pair of morphisms $\lambda : \o R^{\rm gp} \rightarrow \mathcal{O}_X^\ast$ and $1 : \o M_X^{\rm gp} \rightarrow \mathcal{O}_X^\ast$.  This implies $\lambda_X \circ q = 1$ and $\lambda_X \circ i = \lambda$ in the notation of the diagram below:
\begin{equation*} \xymatrix{
0 \ar[r] & \pi^\ast \o M_S^{\rm gp} \ar[r] \ar[d] & \o R_0^{\rm gp} \ar@{-->}[dl]_\epsilon \ar[r]^r \ar[d]_q & \o M^{\rm gp} \ar[r] \ar[d] \ar@{-->}[dl]_{\o\rho} & 0 \\
0 \ar[r] & \o R^{\rm gp} \ar[r]^i & \o R_X^{\rm gp} \ar[r] & \o M_{X/S}^{\rm gp} \ar[r] & 0
} \end{equation*}
That $\lambda_X \circ q = 1$ follows from the fact that $q$ factors through $\o M_X^{\rm gp} \rightarrow \o R_X^{\rm gp}$.

For any $y \in R_0^{\rm gp}$, let $r(y)$ be its image in $M^{\rm gp}$.  Then we have
\begin{align*}
0 = \alpha_X ( \rho  r(y ) ) - \rho r(y)
& = \log \lambda_X  \o\rho  \o r(\o y)  \\
& = \log \lambda_X(q(\o y) - i \circ \epsilon(\o y)) \\
& = - \log \lambda_X(i \circ \epsilon(\o y)) \\
& = - \log \lambda(\epsilon \o y) .
\end{align*}
As $\epsilon : R_0^{\rm gp} \rightarrow R^{\rm gp}$ is surjective, we conclude that $\alpha - \id = \log \lambda = 0$ so $\alpha = \id$.
\end{proof}

\begin{corollary}
The automorphism group of a minimal object of $\GS(\u S)$ is trivial.
\end{corollary}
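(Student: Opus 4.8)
The plan is to reduce the statement to the preceding lemma on automorphisms of minimal objects of $\GS^{\rm loc}(\u X)$, exploiting the functor $\GS(\u S) \to \GS^{\rm loc}(\u X)$ sending $(N_S,\varphi)$ to $(\pi^\ast N_S,\varphi)$ together with the adjunction $\pi_! \dashv \pi^\ast$ of Theorem~\ref{thm:left-adj}. Let $(Q_S,\psi)$ be a minimal object of $\GS(\u S)$ and let $\beta$ be an automorphism of it. Exactly as in the proof of the preceding lemma, I would first show that $\beta$ is the identity on characteristic monoids, and then that the residual $\mathcal O_S^\ast$-valued datum also vanishes.

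For the characteristic level, the universal property of $\o Q_S$ isolated in the proof of Lemma~\ref{lem:minimal-existence} applies: by the pushout description of $\o Q_S$ in diagram~\eqref{eqn:1} and the adjunction, an endomorphism of $(\o Q_S,\o\psi)$ compatible with the tacit map from $\o M_S$ corresponds to an endomorphism of $(\o R,\o\rho)$ compatible with the map from $\pi^\ast\o M_S$, and Lemma~\ref{lem:initial-characteristic} guarantees that such an endomorphism of given type is unique. Since both $\o\beta$ and the identity qualify, $\o\beta = \id$. Consequently $\beta$ has the form $\beta(x) = x + \log\lambda(\o x)$ for a homomorphism $\lambda : \o Q_S^{\rm gp} \to \mathcal O_S^\ast$, and the compatibility of $\beta$ with the tacit map $M_S \to Q_S$ forces $\lambda$ to vanish on $\o M_S^{\rm gp}$.

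It then remains to show $\lambda = 1$. Because $\o Q_S$ is the pushout of $\o M_S$ and $\pi_! \o R$ over $\pi_!\pi^\ast\o M_S$, and $\lambda$ already vanishes on $\o M_S^{\rm gp}$, the homomorphism $\lambda$ is determined by its composite $\lambda\circ c$ with the structural map $c : \pi_! \o R^{\rm gp} \to \o Q_S^{\rm gp}$. By the adjunction $\pi_!\dashv\pi^\ast$, the composite $\lambda\circ c$ corresponds to a homomorphism $\o R^{\rm gp} \to \pi^\ast\mathcal O_S^\ast \to \mathcal O_X^\ast$ vanishing on $\pi^\ast\o M_S^{\rm gp}$, hence to an automorphism $\gamma$ of $R$ over $\pi^\ast M_S$. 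I would then translate the hypothesis that $\beta$ fixes $\psi$ into the statement that $\gamma$ fixes $\rho$: here one uses that $\o\psi$ factors as $\o M \xrightarrow{\o\rho} \o R_X \to \o Q_X$ through the pushout of $c$, so that the single equation expressing $\beta$-invariance of $\psi$ becomes exactly the equation expressing $\gamma$-invariance of $\rho$, making $\gamma$ an automorphism of the object $(R,\rho)$ of $\GS^{\rm loc}(\u X)$. The preceding lemma then gives $\gamma = \id$, whence $\lambda\circ c = 1$; since $\o Q_S^{\rm gp}$ is generated by $\o M_S^{\rm gp}$ together with the image of $c$, and $\lambda$ vanishes on both, we conclude $\lambda = 1$ and $\beta = \id$.

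The main obstacle is the bookkeeping in the last step: one must align the pushout $Q_X$, the factorization $\psi = (\o R_X \to \o Q_X)\circ\o\rho$, and the adjunction carefully enough that $\beta$-invariance of $\psi$ transports precisely to $\rho$-invariance of $\gamma$. A minor additional point is the injectivity of $\pi^\ast\mathcal O_S^\ast \to \mathcal O_X^\ast$ needed to deduce $\lambda\circ c = 1$ from $\gamma = \id$; this holds wherever the fibers of $\pi$ are nonempty, and where a fiber is empty the corresponding stalk of $\pi_!\o R$ vanishes, so $\lambda$ is automatically trivial there.
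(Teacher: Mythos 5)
Your proof is correct and follows essentially the same route as the paper's: reduce to the automorphism lemma for minimal objects of $\GS^{\rm loc}(\u X)$ via the adjunction $\pi_! \dashv \pi^\ast$ and the pushout presentation of $\o Q_S$, first killing the automorphism on characteristic monoids and then showing the residual homomorphism $\lambda : \o Q_S^{\rm gp} \rightarrow \mathcal{O}_S^\ast$ is trivial by transporting it to an automorphism of $(R,\rho)$. One small repair: the injectivity of $\pi^\ast \mathcal{O}_S^\ast \rightarrow \mathcal{O}_X^\ast$ does not follow from nonemptiness of the fibers of $\pi$ (consider $\Spec k \hookrightarrow \Spec k[\epsilon]/(\epsilon^2)$); the paper deduces it from flatness of $\pi$, which is a standing hypothesis of the section, and your argument goes through verbatim with that justification in place of yours.
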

\begin{proof}
Suppose that $\alpha$ is an automorphism of a minimal object $(Q_S,\psi)$.  Then $\pi^\ast \alpha$ is an automorphism of the object $(\pi^\ast Q_S, \psi) \in \GS^{\rm loc}(\u X)$.  Choose a map $\iota : (R,\rho) \rightarrow (\pi^\ast Q_S,\psi)$ with $(R,\rho)$ minimal.  Then by the definition of minimality, there is a commutative diagram:
\begin{equation*} \xymatrix{
(R,\rho) \ar[d]_\iota \ar@{-->}[r] & (R,\rho) \ar[d]^\iota \\
(\pi^\ast Q_S,\psi) \ar[r]^{\pi^\ast \alpha} & (\pi^\ast Q_S,\psi)
} \end{equation*}
The dashed arrow is an automorphism of $(R,\rho)$, hence must be the identity by the lemma.  Therefore, by adjunction, the diagram below must commute:
\begin{equation*} \xymatrix{
& \pi_! \o R^{\rm gp} \ar[dr] \ar[dl] \\
\o Q_S^{\rm gp} \ar[rr]^{\o\alpha} & & \o Q_S^{\rm gp}
} \end{equation*}
But, by definition, $\o Q_S^{\rm gp}$ is generated by $\pi_! \o R^{\rm gp}$ and $\o M_S^{\rm gp}$ (diagram~\eqref{eqn:1} and the subsequent discussion).  By assumption, $\alpha$ commutes with the map $\o M_S^{\rm gp} \rightarrow Q_S^{\rm gp}$ and we have just shown it commutes with the map $\pi_! \o R^{\rm gp}$.  It follows that $\o\alpha$ is the identity map.

This implies that $\alpha$ must be induced from a map $\delta : \o Q^{\rm gp}_S \rightarrow \mathcal{O}_S^\ast$, which we would like to show is trivial.  This map is induced from a map $\o Q_S^{\rm gp} / \o M_S^{\rm gp} \rightarrow \mathcal{O}_S^\ast$ since $\delta$ is trivial on $\o M_S^{\rm gp}$.  Since $\pi_!$ preserves colimits (it is a left adjoint) we use the pushout in the left half of diagram~\eqref{eqn:1} to make identifications:
\begin{equation*}
\o Q_S^{\rm gp} / \o M_S^{\rm gp} \simeq \pi_! \o R^{\rm gp} / \pi_! \pi^\ast \o M_S^{\rm gp} \simeq \pi_! (\o R^{\rm gp} / \pi^\ast \o M_S^{\rm gp}) 
\end{equation*}
By adjunction, $\delta$ is therefore induced from a map 
\begin{equation} \label{eqn:10}
\tilde{\delta} : \o R^{\rm gp} / \pi^\ast \o M_S^{\rm gp} \rightarrow \pi^\ast \mathcal{O}_S^\ast .
\end{equation}
But composing this with the map $\pi^\ast \mathcal{O}_S^\ast \rightarrow \mathcal{O}_X^\ast$ gives an automorphism of $(R,\rho)$, which must be trivial, by the lemma.  On the other hand, $\pi^\ast \mathcal{O}_S^\ast$ injects into $\mathcal{O}_X$ since $X$ is flat over $S$, so the map $\tilde{\delta}$---and by adjunction also $\delta$---must be trivial.
\end{proof}

\begin{corollary}
The functor $\Hom_{\LogSch/S}(M,M_X)$ is representable by a logarithmic algebraic space.
\end{corollary}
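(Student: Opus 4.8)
The plan is to assemble the ingredients prepared in the preceding sections. There are three: the algebraicity of $\Log(\Hom_{\LogSch/S}(M,M_X))$ over $\Log(S)$, and hence over the category of schemes, furnished by Proposition~\ref{prop:log-rep}; the verification of Gillam's minimality criterion, furnished by Lemmas~\ref{lem:minimal-existence} and~\ref{lem:minimal-pullback}; and the triviality of the automorphism groups of minimal objects, established in the two results immediately above. Since the hard analytic and combinatorial work has already been carried out, the content of this corollary is the correct combination of these facts.

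First I would invoke Gillam's criterion (Proposition~\ref{prop:minimal-criteria}). Lemma~\ref{lem:minimal-existence} supplies a morphism from a minimal object to every object of $\GS(T)$, and Lemma~\ref{lem:minimal-pullback} shows minimality is stable under arbitrary pullback; by the note following those lemmas the same conclusions transfer to $\Hom_{\LogSch/S}(M,M_X)$ after passage to associated logarithmic structures. The augmented form of the criterion then identifies the fibered category $\Hom_{\LogSch/S}(M,M_X)$ over $\LogSch/S$ with the substack of minimal objects of $\Log(\Hom_{\LogSch/S}(M,M_X))$, and asserts that this substack is \emph{open}. Combining the openness with the algebraicity of $\Log(\Hom_{\LogSch/S}(M,M_X))$ from Proposition~\ref{prop:log-rep}, the minimal locus is an open algebraic substack, so $\Hom_{\LogSch/S}(M,M_X)$ is representable by a logarithmic algebraic stack.

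It remains to upgrade this stack to an algebraic space, and here I would invoke the triviality of the automorphism group of a minimal object proved just above. Because minimality is preserved under pullback, every object of the minimal locus has trivial automorphism group over any test scheme; equivalently, the diagonal of $\Hom_{\LogSch/S}(M,M_X)$ is a monomorphism. An algebraic stack whose diagonal is a monomorphism is an algebraic space, which yields the conclusion. The main obstacle in this corollary is not any single step but the bookkeeping around the associated logarithmic structure: one must check that the passage from $\GS$ to $\Hom_{\LogSch/S}(M,M_X)$ is compatible both with the openness claim of Gillam's criterion and with the stabilizer computation, so that the trivial-automorphism property established for minimal objects of $\GS(\u S)$ genuinely transports to the minimal logarithmic structures representing $\Hom_{\LogSch/S}(M,M_X)$.
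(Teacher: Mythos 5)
Your proposal is correct and takes essentially the same route as the paper, which states this corollary without further proof precisely because it is the assembly you describe: Proposition~\ref{prop:log-rep} together with Olsson's theorem gives algebraicity of $\Log(\Hom_{\LogSch/S}(M,M_X))$; Gillam's criterion (Proposition~\ref{prop:minimal-criteria}) via Lemmas~\ref{lem:minimal-existence} and~\ref{lem:minimal-pullback}, plus openness of the minimal locus (Theorem~\ref{thm:openness-minimal}), yields a logarithmic algebraic stack; and the triviality of automorphisms of minimal objects established just above reduces the stack to an algebraic space. The only detail worth making explicit is that Theorem~\ref{thm:openness-minimal} requires minimal objects to be coherent, which the construction in Section~\ref{sec:local-case} guarantees; your closing remark about compatibility with passage to associated logarithmic structures covers the remaining bookkeeping.
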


\begin{corollary}
The morphism $\Hom_{\LogSch/S}(X,Y) \rightarrow \Hom_{\LogSch/S}(\u X, \u Y)$ is representable by logarithmic algebraic spaces.
\end{corollary}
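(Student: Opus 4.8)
The plan is to deduce the statement from the previous corollary by reversing the reductions of the introduction. All of the substantive content has already been established, so the task is to organize the bookkeeping; the one point demanding care is the use of the integrality hypothesis.

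To say that $\Hom_{\LogSch/S}(X,Y) \to \Hom_{\LogSch/S}(\u X, \u Y)$ is representable by logarithmic algebraic spaces is to say that for every logarithmic scheme $S'$ over $S$ and every $S'$-point of the target, the fiber product over $\Hom_{\LogSch/S}(\u X, \u Y)$ is a logarithmic algebraic space over $S'$, compatibly with base change in $S'$. By definition an $S'$-point of the target is a morphism $f : \u X \fpr_{\u S} \u S' \to \u Y \fpr_{\u S} \u S'$ of underlying algebraic spaces over $\u S'$. First I would recall that, because $\pi$ is integral, the formation of fiber products of fine logarithmic schemes commutes with passage to the underlying algebraic spaces; writing $X' = X \fpr_S S'$ this gives $\u X' = \u X \fpr_{\u S} \u S'$, and consequently the underlying morphism of any logarithmic $S$-morphism $X_T \to Y_T$ is an honest morphism of underlying algebraic spaces over $\u S'$. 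This is exactly what makes the comparison morphism in the statement well defined in the first place.

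Next I would identify the fiber over $f$. Once the underlying morphism $f$ has been fixed, a logarithmic $S$-morphism $X_T \to Y_T$ lifting $f_T$ is the same datum as a morphism of logarithmic structures $f_T^\ast M_Y \to M_{X_T}$ compatible with the structural maps from $\pi^\ast M_T$, since this map of logarithmic structures is precisely the additional information in a morphism of logarithmic schemes over and above its underlying morphism. Setting $M = f^\ast M_Y$ on $\u X'$, this identifies the fiber of the comparison morphism over $f$ with the functor $\Hom_{\LogSch/S'}(M, M_X)$. The logarithmic structure $M_Y$ is fine by our standing convention, so $M = f^\ast M_Y$ is fine, hence coherent, and $M_X$ likewise remains coherent after base change; moreover $\pi' : X' \to S'$ inherits from $\pi$ all of the properties of being proper, flat, of finite presentation, geometrically reduced, and integral, each of which is stable under base change. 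The previous corollary, applied with $S$ replaced by $S'$, therefore shows that $\Hom_{\LogSch/S'}(M, M_X)$ is representable by a logarithmic algebraic space.

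Finally, I would observe that the identification of the fiber with $\Hom_{\LogSch/S'}(M, M_X)$ is functorial in $S'$: a further base change carries $f$ to its pullback and $M$ to the corresponding pullback of logarithmic structures, so the construction commutes with base change. This establishes representability by logarithmic algebraic spaces. The main obstacle here is conceptual rather than computational—one invokes integrality twice, once to make the comparison morphism exist and once to recognize its fibers as spaces of morphisms of logarithmic structures—after which the genuinely hard input, the representability of $\Hom_{\LogSch/S}(M,M_X)$ together with the triviality of automorphisms proved in the preceding sections, supplies all of the real content.
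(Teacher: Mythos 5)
Your proposal is correct and follows essentially the same route as the paper: the paper states this corollary without a separate proof, relying on the reduction described in its introduction, whereby the fiber of the comparison morphism over an underlying morphism $f$ is identified with $\Hom_{\LogSch/S'}(M, M_X)$ for $M = f^\ast M_Y$ (with integrality of $\pi$ guaranteeing that the comparison morphism exists at all), and the preceding corollary supplies the representability of that fiber. Your write-up simply makes explicit the bookkeeping—base-change stability of the hypotheses and functoriality of the identification—that the paper leaves implicit.
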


%
%

\appendix

\section{Integral morphisms of monoids}
\label{sec:integral}

Recall that a morphism of monoids $f : P \rightarrow Q$ (written additively) is said to be \emph{integral} if, for any $a, a' \in P$ and $b, b' \in Q$ such that
\begin{equation*}
f(a) + b = f(a') + b'
\end{equation*}
there are elements $c, c' \in P$ and $d \in Q$ such that $a + c = a' + c'$ and $b = d + f(c)$ and $b' = d + f(c')$.

Continue to assume that $f : P \rightarrow Q$ is integral and let $P \rightarrow P'$ be another morphism of monoids.  Consider the collection of pairs $(a,b)$ where $a \in P'$ and $b \in Q$, modulo the relation $(a,b) \sim (a',b')$ if there are elements $c, c' \in P$ and $d \in Q$ such that $a + c = a' + c'$ and $b = d + f(c)$ and $b' = d + f(c')$.

The proofs of the following two lemmas are omitted as they are straightforward and likely well known.

\begin{lemma}
If $f : P \rightarrow Q$ is integral then the relation defined above is an equivalence relation.
\end{lemma}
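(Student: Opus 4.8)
The plan is to verify the three defining properties of an equivalence relation in turn; reflexivity and symmetry are formal and require no hypothesis on $f$, so the entire content lies in transitivity, which is precisely where integrality enters.

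For reflexivity, $(a,b) \sim (a,b)$ is witnessed by $c = c' = 0$ and $d = b$, since then $a + 0 = a + 0$ and $b = b + f(0)$. For symmetry, if $(a,b) \sim (a',b')$ is witnessed by the data $(c, c', d)$, then the same $d$ together with the swapped pair $(c', c)$ witnesses $(a',b') \sim (a,b)$. Neither uses integrality.

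For transitivity, suppose $(a,b) \sim (a',b')$ and $(a',b') \sim (a'',b'')$, witnessed by $(c_1, c_1', d_1)$ and $(c_2, c_2', d_2)$ respectively. Comparing the two expressions for $b'$ gives $f(c_1') + d_1 = b' = f(c_2) + d_2$ in $Q$, which is an equation of exactly the shape to which the definition of integrality applies. I would therefore invoke integrality of $f$ to obtain $e, e' \in P$ and $g \in Q$ satisfying $c_1' + e = c_2 + e'$ in $P$, together with $d_1 = g + f(e)$ and $d_2 = g + f(e')$. Setting $C = c_1 + e$, $C' = c_2' + e'$, and $D = g$, a direct substitution yields $b = d_1 + f(c_1) = g + f(c_1 + e) = D + f(C)$ and $b'' = d_2 + f(c_2') = g + f(c_2' + e') = D + f(C')$. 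Finally, chaining the identities $a + c_1 = a' + c_1'$, $c_1' + e = c_2 + e'$, and $a' + c_2 = a'' + c_2'$ (the middle one being an identity of $P$, pushed into $P'$) gives $a + C = a + c_1 + e = a' + c_1' + e = a' + c_2 + e' = a'' + c_2' + e' = a'' + C'$ in $P'$. Hence $(a,b) \sim (a'',b'')$.

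The sole obstacle, and the entire reason integrality is assumed, is transitivity: reconciling the two witnessing triples requires splitting the relation $f(c_1') + d_1 = f(c_2) + d_2$ through a common element $d = g$ of $Q$ and corresponding elements of $P$, which is exactly what integrality provides. Everything else is bookkeeping.
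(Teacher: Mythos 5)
Your proof is correct: reflexivity and symmetry are formal, and your single application of integrality to the equation $f(c_1') + d_1 = b' = f(c_2) + d_2$ is exactly the right way to splice the two witnessing triples into one for transitivity. The paper omits this proof entirely, calling it straightforward and well known, and your argument is precisely the standard one it has in mind, so there is nothing to compare beyond noting that you have filled the gap as intended.
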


\begin{lemma} \label{lem:integral-pushout}
The monoid structure on $P' \times Q$ descends to the equivalence classes of the relation defined above and realizes the pushout of $P \rightarrow Q$ via $P \rightarrow P'$.
\end{lemma}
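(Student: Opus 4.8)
The plan is to compare the explicitly described relation $\sim$ with the congruence that defines the pushout, taking the previous lemma (that $\sim$ is an equivalence relation, which is where integrality is used) as a black box. Recall that the pushout of $P \to Q$ along $P \to P'$ in commutative monoids is the quotient $(P' \times Q)/\approx$, where $P' \times Q$ is the coproduct and $\approx$ is the smallest congruence with $(p,0) \approx (0,f(p))$ for every $p \in P$ (the maps $P \to P'$ and $f$ being understood as before). It therefore suffices to prove that $\sim$ coincides with $\approx$.

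First I would check that $\sim$ is compatible with addition, so that, together with the previous lemma, it is a congruence and the monoid structure on $P' \times Q$ descends to the quotient. This is a one-line translation argument: if $(a,b) \sim (a',b')$ is witnessed by $c, c' \in P$ and $d \in Q$, so that $a + c = a' + c'$, $b = d + f(c)$, and $b' = d + f(c')$, then for any $(a_0,b_0)$ the same $c, c'$ together with $d + b_0$ witness $(a + a_0, b + b_0) \sim (a' + a_0, b' + b_0)$. Commutativity then upgrades compatibility with translation to full compatibility with addition, which establishes the first assertion of the lemma.

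Next I would prove the two relations contain one another. For $\sim \;\subseteq\; \approx$: given witnesses $c, c', d$, the generating relation and the congruence property of $\approx$ give $(a,b) = (a, d + f(c)) \approx (a + c, d)$ and likewise $(a',b') \approx (a' + c', d)$; since $a + c = a' + c'$ these right-hand sides are equal, whence $(a,b) \approx (a',b')$. Note that this direction needs no transitivity of $\sim$. For $\approx \;\subseteq\; \sim$: one checks directly that each generator $(p,0) \sim (0,f(p))$ lies in $\sim$ by taking the witnesses $c = 0$, $d = 0$, and $c' = p$; since $\sim$ is by the previous step a congruence containing all the generators while $\approx$ is by definition the smallest such congruence, it follows that $\approx \;\subseteq\; \sim$. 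Hence $\sim \;=\; \approx$, and $(P' \times Q)/\sim$ is the pushout.

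I expect the only genuine subtlety to be the observation that the single-step relation $\sim$ already forms the full congruence rather than merely generating it; this is precisely where integrality enters, through the previous lemma's guarantee of transitivity. Everything else is formal: the compatibility-with-addition check and the mutual containment of $\sim$ and $\approx$ are routine, and the identification of $(P' \times Q)/\approx$ with the categorical pushout is the standard presentation of pushouts of commutative monoids as coequalizers. One could alternatively bypass $\approx$ altogether and verify the universal property of $(P' \times Q)/\sim$ directly against an arbitrary target monoid, but the congruence comparison keeps the bookkeeping to a minimum.
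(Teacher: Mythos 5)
Your proof is correct. Note that the paper itself omits the proof of this lemma (and the preceding one), declaring them ``straightforward and likely well known,'' so there is no argument of the paper to compare against; your identification of the single-step relation $\sim$ with the pushout congruence $\approx$, using integrality (via the previous lemma) only to know that $\sim$ is already transitive, is exactly the standard argument the author presumably had in mind, and your witness bookkeeping in both containments checks out.
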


\section{Minimality}
\label{sec:minimality}

\subsection{Gillam's criteria}
\label{sec:criteria}

This section is included only for convenience.  All of the results here may be found in greater detail in \cite{Gillam}.  Our Proposition~\ref{prop:minimal-criteria}, below, is Descent Lemma~1.3 of op.\ cit.

Since our only application of this section is to the fibered category of logarithmic schemes, $\LogSch$, over the category of schemes, $\Sch$, we have not made any attempt to state the results below in their natural generality.  The reader who is interested in that level of generality may easily verify that all of the arguments below are valid for an arbitrary fibered category.

Let $\Sch$ denote the category of schemes and let $\LogSch$ denote the category of logarithmic schemes.  Note that $\LogSch$ is an \'etale stack (not fibered in groupoids) over $\Sch$.  Recall that a logarithmic structure on a fibered category $F$ over $\Sch$ is a cartesian section of $\LogSch$ over $F$.

Suppose that $F$ is a fibered category over $\Sch$ with a logarithmic structure $M : F \rightarrow \LogSch$.  There is an induced fibered category $\f L (F,M)$ over $\LogSch$:  The objects of $\f L (F,M)$ are pairs $(\eta, f)$ where $\eta \in F$ and $f : Y \rightarrow M(\eta)$ is a morphism in $\LogSch$ such that $\u Y = \u{M(\eta)}$ and $f$ lies above the identity morphism.

When $F$ is representable by a logarithmic scheme, this is the familiar construction that associates to $F$ the functor it represents on logarithmic schemes.

We give a characterization of the fibered categories $G$ on $\LogSch$ that are equivalent to $\f L (F,M)$ for a fibered category $F$ with logarithmic structure $M$.

An object $\xi$ of $G$ is called \emph{minimal} if every diagram of solid lines in $G(\u \xi)$
\begin{equation*} \xymatrix{
\eta \ar[dr] \ar[rr] & & \xi \\
& \omega \ar@{-->}[ur]
} \end{equation*}
lying above $\id_{\u \xi}$ admits a unique completion by a dashed arrow.

\begin{proposition} \label{prop:minimal-criteria}
A fibered category $G$ over $\LogSch$ is equivalent to $\f L(F,M)$ if and only if the following two conditions hold:
\begin{enumerate}
\item for every $\eta \in G$ there is a minimal object $\xi \in G(\u\eta)$ and a morphism $\eta \rightarrow \xi$ lying above the identity of $\u\eta$, and
\item the pullback of a minimal object of $G$ via a morphism of $\Sch$ is minimal.
\end{enumerate}
\end{proposition}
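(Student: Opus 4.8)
I would establish the two implications separately, isolating first the single fact that powers everything: within a fixed fiber $G(\u\eta)$ a minimal object through $\eta$ is unique up to unique isomorphism, and every morphism between minimal objects lying over an identity of $\Sch$ is an isomorphism. Both follow by pure diagram chasing from the completion property. For uniqueness, if $\xi,\xi'$ are minimal and $\eta\to\xi$, $\eta\to\xi'$ are given, minimality of $\xi$ produces a unique $\xi'\to\xi$ and that of $\xi'$ a unique $\xi\to\xi'$, and the uniqueness clauses force the two composites to be identities. For the second fact, given $\phi:\xi\to\xi'$ with both minimal, minimality of $\xi$ applied to $\id_\xi$ and $\phi$ yields a one-sided inverse, and iterating the argument on $\xi'$ upgrades it to a two-sided inverse. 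Consequently the minimal objects of any $G$ satisfying the completion property assemble into a subcategory that is fibered \emph{in groupoids} over $\Sch$.

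\textbf{The forward implication.} Assuming $G\simeq\f L(F,M)$, I would identify the minimal objects as the \emph{strict} pairs, that is, those $(\eta,f)$ for which $f:Y\to M(\eta)$ is an isomorphism; by the previous paragraph there is no loss in taking $F$ fibered in groupoids, and for such $F$ a morphism into $(\eta,\id_{M(\eta)})$ is determined by its $F$-component, so the completion $\omega\to(\eta,\id_{M(\eta)})$ exists and is unique. Condition~(1) then holds because the tautological arrow $(\id_\eta,f):(\eta,f)\to(\eta,\id_{M(\eta)})$ exhibits a map to a minimal object in the same fiber, and condition~(2) holds because pullback of logarithmic structures along a morphism of $\Sch$ carries isomorphisms to isomorphisms, hence strict objects to strict objects.

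\textbf{The reverse implication.} Assuming (1) and (2), I would let $F$ be the subcategory of $G$ whose objects are the minimal objects and whose morphisms are the morphisms of $G$ between them; condition~(2) supplies cartesian lifts, so $F$ is fibered over $\Sch$ (and in groupoids, by the key lemma), and restricting the structural functor $G\to\LogSch$ gives a cartesian section $M:F\to\LogSch$, i.e. a logarithmic structure. I would define $\Phi:G\to\f L(F,M)$ on objects by $\eta\mapsto(\xi,\,\eta\to\xi)$, where $\xi$ is a minimal object under $\eta$ chosen via~(1) and the accompanying arrow $\eta\to\xi$ records the datum $f$; uniqueness of $\xi$ makes this canonical, and functoriality on morphisms comes from transporting minimal objects (minimality over identities, together with (2) over a general base). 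In the other direction I would send $(\xi,f)$ to a cartesian pullback of $\xi$ along the morphism of logarithmic structures encoded by $f$, using that $G\to\LogSch$ is fibered.

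\textbf{Completing the equivalence and the main obstacle.} To conclude I would produce natural isomorphisms $\Psi\Phi\cong\id_G$ and $\Phi\Psi\cong\id$, each a direct consequence of minimality: the chosen arrow $\eta\to\xi$ realizes $\eta$ as the pullback of the minimal $\xi$ along the induced morphism of logarithmic structures, while a minimal $\xi$ is manifestly its own minimal object, so $\Phi$ is fully faithful and essentially surjective. I expect the principal difficulty to lie not in any single step but in the bookkeeping forced by the two stacked fibrations $G\to\LogSch\to\Sch$, combined with the fact that $\LogSch$ is a stack that is \emph{not} fibered in groupoids: the fibers carry genuinely non-invertible morphisms, the variance of morphisms of logarithmic structures must be tracked in the opposite direction from morphisms of logarithmic schemes, and every construction is canonical only up to coherent isomorphism. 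Once minimality is used to rigidify the maps into minimal objects, these coherences are forced, and fullness and faithfulness of $\Phi$ follow formally.
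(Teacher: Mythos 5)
Your proposal is correct and takes essentially the same route as the paper: both directions rest on taking $F$ to be the subcategory of minimal objects with $M$ the restriction of the projection $G \rightarrow \LogSch$, and on the pullback functor $(\xi,f) \mapsto f^\ast \xi$, with strict pairs $(\eta,\id)$ serving as the minimal objects of $\f L(F,M)$. The only differences are presentational: you isolate the uniqueness-of-minimal-objects lemma and build an explicit quasi-inverse, whereas the paper verifies the equivalence fiberwise via the decomposition $G(S) \simeq \coprod_{\eta \in F(S)} G(S)/\eta \simeq \f L(F,M)(S)$ --- the same facts underlie both arguments (including the implicit standing assumption, shared with the paper, that $G$ is fibered in groupoids over $\LogSch$).
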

\begin{proof}
Certainly $\f L(F,M)$ has this property.  The minimal object associated to $(\alpha,f)$ is $(\alpha,\id_{\u\alpha})$.

Conversely, let $F$ be the full subcategory of minimal objects of $G$.  By assumption, this is a fibered category over $\Sch$ with a map $M : F \rightarrow \Log\Sch$ by composition of the inclusion $F \subset G$ with the projection $G \rightarrow \LogSch$.  This is cartesian over $\Sch$ because $F$ cartesian in $G$ over $\Sch$ and $G \rightarrow \LogSch$ is cartesian over $\LogSch$, hence over $\Sch$.

We have a functor $\f L(F,M) \rightarrow G$ sending $(\alpha, f)$ to $f^\ast \alpha$.  We verify this is an equivalence.  As this is a cartesian functor between fibered categories, the verification can be done fiberwise over $\Sch$.  That is, it is enough to show that the functors $\f L (F,M)(S) \rightarrow G(S)$ are equivalences for all schemes $S$.

But $G(S)$ may be identified with 
\begin{equation*}
\coprod_{\eta \in F(S)} G(S) / \eta \:\: \simeq \coprod_{\eta \in F(S)} \LogSch / M(\eta) \simeq \f L (F,M)(S) . 
\end{equation*}
\end{proof}

\subsection{Openness of minimality}
\label{sec:openness}

Unlike the previous section, this section is specific to logarithmic structures.

The proof of Proposition~\ref{prop:minimal-criteria} shows that $G$ is $\f L(F,M)$ where $F \subset G$ is the open substack of minimal objects.  The next proposition shows that when $F$ and $G$ are fibered over \emph{coherent} logarithmic schemes (in other words, when minimal objects are coherent), $F$ is an \emph{open} substack of $G$.

For any $\xi \in F$, the image of $\xi$ in $\LogSch$ is a logarithmic scheme $S$.  We refer to the logarithmic structure on $S$ also as the logarithmic structure on $\xi$.

\begin{theorem} \label{thm:openness-minimal}
Suppose that the logarithmic structure on each $\xi \in F$ is coherent.  Then $F \subset \Log(G)$ is open.
\end{theorem}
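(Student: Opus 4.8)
The plan is to reduce the openness of $F$ in $\Log(G)$ to the openness of the locus where a single morphism of fine characteristic sheaves is an isomorphism, and then to prove that openness by transporting a chart through that morphism. Recall that by Proposition~\ref{prop:minimal-criteria} we have $G \simeq \f L(F,M)$, and that under this equivalence an object of $G$ over a scheme $T$ is the same as a fine logarithmic structure $M_T$ on $T$ together with an object $\zeta$ of $G$ over $(T,M_T)$; the minimal objects are exactly those whose structural comparison of logarithmic structures is an isomorphism. To test openness I would fix such a $\zeta$, view it as a morphism $T \to \Log(G)$, and compute $F \times_{\Log(G)} T$. Applying criterion~(1) of Proposition~\ref{prop:minimal-criteria} to $\zeta$ produces a minimal object $\zeta^{\min}$ of $G$ over $(T,M^{\min})$ and a morphism $f : \zeta \to \zeta^{\min}$ lying over the identity of $T$; this lies over a map of logarithmic structures $u : M^{\min} \to M_T$, and $M^{\min}$ is coherent by hypothesis.

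The claim I would then prove is
\[
F \times_{\Log(G)} T = \{\, t \in T : u_{\bar t} \text{ is an isomorphism} \,\}.
\]
The implication ``$\zeta$ minimal $\Rightarrow u$ an isomorphism'' is formal: applying the defining universal property of minimality to the pair $\zeta \xrightarrow{\mathrm{id}} \zeta$ and $\zeta \xrightarrow{f} \zeta^{\min}$, together with the same property for $\zeta^{\min}$, produces a two-sided inverse of $f$, so $f$, and hence $u$, is an isomorphism. The reverse implication is immediate from the description of the minimal objects of $\f L(F,M)$, since a pair whose comparison map is an isomorphism is isomorphic to one of the form $(\alpha,\mathrm{id})$. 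As minimality is stable under pullback by criterion~(2), and isomorphisms pull back to isomorphisms, the two subfunctors of $T$ agree, and everything reduces to showing that the locus where $u$ is an isomorphism is open.

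The substantive step, and the only place the fineness hypothesis enters, is this openness. A morphism of fine logarithmic structures over $\mathrm{id}_T$ is an isomorphism exactly when the induced map of characteristic sheaves $\overline{u} : \overline{M^{\min}} \to \overline{M_T}$ is, so it suffices to show $\{t : \overline{u}_{\bar t}\text{ an isomorphism}\}$ is open. I would fix $t_0$ with $\overline{u}_{\bar t_0}$ an isomorphism and choose, \'etale-locally near $t_0$, a neat chart $Q \to M^{\min}$, so that $Q \xrightarrow{\sim} (\overline{M^{\min}})_{\bar t_0}$ and $(\overline{M^{\min}})_{\bar t} = Q/G_t$ on a neighborhood, where $G_t$ is the face of those $q \in Q$ whose image under $\exp$ is invertible at $t$. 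Composing with $u$ gives a homomorphism $c : Q \to M_T$ whose characteristic at $t_0$ is $\overline{u}_{\bar t_0}$, hence an isomorphism; by the standard fact that a homomorphism from a fine monoid inducing an isomorphism on the characteristic monoid at a point is a chart in an \'etale neighborhood of that point, $c$ is a chart for $M_T$ near $t_0$. Consequently $(\overline{M_T})_{\bar t} = Q/G'_t$ nearby, where $G'_t$ is the face of those $q$ whose image under $\exp$ is invertible at $t$. The key point is that $u$ commutes with the structural maps $\exp$, so $\exp(c(q)) = \exp(\mathrm{chart}(q))$ for every $q$; hence $G'_t = G_t$ identically on the neighborhood, and $\overline{u}_{\bar t} : Q/G_t \to Q/G'_t$ is an isomorphism for every $t$ near $t_0$.

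This exhibits the isomorphism locus of $u$ as open and identifies it with $F \times_{\Log(G)} T$, which yields the openness of $F$ in $\Log(G)$. I expect the only genuine obstacle to be the chart step above: the whole argument hinges on spreading out the pointwise isomorphism of characteristic monoids to a neighborhood, and the clean way to achieve this is to carry the neat chart of $M^{\min}$ through $u$ and exploit the compatibility of $u$ with $\exp$ so that the two face conditions cutting out the characteristic monoids coincide. The remaining points---the formal comparison of minimality with the invertibility of $u$, and the passage between $u$ and $\overline{u}$---are routine.
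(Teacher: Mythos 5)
Your overall strategy is the same as the paper's: you identify the locus where an object $\zeta$ of $\Log(G)$ over $T$ is minimal with the locus where the canonical comparison map $u : M^{\min} \rightarrow M_T$ (coming from the morphism to the minimal object supplied by Proposition~\ref{prop:minimal-criteria}) is an isomorphism, and you then reduce to the key lemma that the isomorphism locus of a morphism of coherent logarithmic structures is open, proved by a chart argument. Your formal verification that minimality is equivalent to $u$ being an isomorphism, and that both conditions are compatible with pullback, is correct and fills in details that the paper merely asserts.

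The step that fails as written is the very first move of your chart argument: the choice, \'etale-locally near $t_0$, of a \emph{neat} chart $Q \rightarrow M^{\min}$ with $Q \cong (\overline{M^{\min}})_{\bar t_0}$. Producing such a chart requires splitting the extension $0 \rightarrow \mathcal{O}_{T,\bar t_0}^\ast \rightarrow (M^{\min})^{\rm gp}_{\bar t_0} \rightarrow (\overline{M^{\min}})^{\rm gp}_{\bar t_0} \rightarrow 0$, and the obstruction lies in $\mathrm{Ext}^1\bigl((\overline{M^{\min}})^{\rm gp}_{\bar t_0}, \mathcal{O}_{T,\bar t_0}^\ast\bigr)$, which is nonzero whenever the characteristic group has torsion of order divisible by the residue characteristic: for instance, over $\operatorname{Spec} k[[t]]$ with $\mathrm{char}\, k = 2$, the fine logarithmic structure associated to $\langle e_1, e_2, f \mid 2e_2 = 2e_1 + f \rangle \rightarrow k[[t]]$, $e_i \mapsto 0$, $f \mapsto 1+t$, has characteristic group $\mathbf{Z} \oplus \mathbf{Z}/2$ at the closed point and admits no neat chart there, even though the local ring is strictly henselian. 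The theorem assumes only that the minimal logarithmic structures are coherent---in this paper they are fine but not necessarily saturated, so such torsion genuinely occurs---and this is exactly why the paper's own proof works with charts of the \emph{characteristic} sheaves and quasi-logarithmic structures (Definition~\ref{def:quasi-log}), which never requires lifting a characteristic monoid into the logarithmic structure itself. Your argument is repairable without neatness: take any finitely generated chart $\beta : Q \rightarrow M^{\min}$ (these do exist \'etale-locally by coherence). Because $u$ commutes with $\exp$, the faces $\beta^{-1}\exp^{-1}(\mathcal{O}_T^\ast)$ and $(u\beta)^{-1}\exp^{-1}(\mathcal{O}_T^\ast)$ are literally equal at every point, so the characteristic of the logarithmic structure associated to $u\beta$ is $Q/G_t = (\overline{M^{\min}})_{\bar t}$ everywhere on the chart neighborhood; since $\overline{u}_{\bar t_0}$ is an isomorphism, $u\beta$ induces an isomorphism of associated logarithmic structures at $\bar t_0$, hence is a chart of $M_T$ near $t_0$ by the recognition lemma you cite, and your face comparison then goes through verbatim.
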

\begin{proof}
We must show that, for any $\eta \in \Log(G)$ lying above a scheme $S$, the locus in $S$ where $\eta$ is minimal is open in $S$.  Let $\xi$ be the minimal object admitting a morphism from $\eta$ and let $M_\eta \rightarrow M_\xi$ be the associated morphism of logarithmic structures.  The locus in $S$ where $\eta$ is minimal is the same as the locus where $\eta \rightarrow \xi$ restricts to an isomorphism.  Since $G$ is fibered in groupoids over $\LogSch$, the map $\eta \rightarrow \xi$ restricts to an isomorphism if and only if $M_\xi \rightarrow M_\eta$ does.  The following lemma therefore completes the proof.
\end{proof}

\begin{lemma}
Let $\alpha : M \rightarrow M'$ be a morphism of coherent logarithmic structures on a scheme $S$.  The locus in $S$ where $\alpha$ is an isomorphism is an open subset of $S$.
\end{lemma}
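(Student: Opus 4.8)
The plan is to verify that the locus where $\alpha$ is an isomorphism is open by checking the two conditions that characterize an isomorphism of coherent logarithmic structures separately, showing each defines an open condition. Since the question is local on $S$ in the étale topology, and since $M$ and $M'$ are coherent, I would first pass to an étale neighborhood over which both $M$ and $M'$ admit charts and the characteristic monoids $\overline{M}$ and $\overline{M'}$ are constant (this is possible after shrinking, since the characteristics of a coherent logarithmic structure are constructible and in fact locally constant on a suitable stratification). Reducing to constant characteristics converts the problem into controlling the behavior of $\overline{\alpha} : \overline{M} \to \overline{M'}$ at the level of finitely generated monoids together with the unit part.

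The key observation is that a morphism of logarithmic structures $\alpha : M \to M'$ is an isomorphism at a geometric point $s$ if and only if the induced morphism on characteristic monoids $\overline{\alpha}_s : \overline{M}_s \to \overline{M'}_s$ is an isomorphism (the unit parts $\mathcal{O}_S^\ast$ are identified automatically, so an isomorphism on characteristics lifts uniquely to an isomorphism of the logarithmic structures themselves). Thus I would reduce the lemma to the assertion that the locus where $\overline{\alpha}$ is an isomorphism of sheaves of monoids is open. With the characteristics made constant on the neighborhood, $\overline{\alpha}$ is governed by a homomorphism between two fixed finitely generated monoids at each point, but these homomorphisms can genuinely vary as one moves in $S$ because the maps on charts interact with the units; the honest content is that being an isomorphism is a closed-and-open condition detected on stalks.

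More concretely, I would argue in two steps. First, surjectivity of $\overline{\alpha}$ is an open condition: the image of a finite generating set of $\overline{M}$ must generate $\overline{M'}$, and since the cokernel-type condition is detected by whether finitely many sections of $\overline{M'}$ lie in the image, which can only fail on a closed locus (sections specializing into smaller monoids), its complement is open. Second, injectivity (or rather the failure of $\overline{\alpha}$ to identify distinct elements) is likewise detected by finitely many equalities among the images of generators, so the locus where no collapsing occurs is open. Intersecting these two open loci gives the locus where $\overline{\alpha}$ is an isomorphism on characteristics, hence where $\alpha$ is an isomorphism.

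\emph{The main obstacle} will be handling the interaction between the monoid-theoretic data and the unit sheaf $\mathcal{O}_S^\ast$ carefully enough that the reduction to characteristic monoids is clean: one must ensure that an isomorphism at the level of $\overline{\alpha}$ really does lift, and that the openness statements for surjectivity and injectivity of $\overline{\alpha}$ are genuinely open rather than merely constructible. This is where the coherence hypothesis is essential, since it guarantees that only finitely many generators and relations need to be tracked, so that each condition reduces to the vanishing or non-vanishing of finitely many sections and thereby cuts out a genuinely open subset. I expect the cleanest route is to observe that both $\overline{\alpha}$ being an epimorphism and the kernel congruence being trivial are each equivalent to conditions on stalks that, by constructibility of the characteristics and finiteness of the generating data, propagate to open neighborhoods.
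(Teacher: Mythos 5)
Your overall strategy---reduce to the characteristic monoids via the observation that $\alpha$ is an isomorphism if and only if $\overline{\alpha}$ is, then propagate an isomorphism at a geometric point to a neighborhood---has the right shape and matches the paper's in outline, but two of your concrete steps fail. First, you cannot shrink $S$, even \'etale-locally, so that $\overline{M}$ and $\overline{M}'$ become constant. Characteristic monoids of coherent logarithmic structures are constructible, not locally constant near points of the support: for the divisorial logarithmic structure of the origin in $\mathbf{A}^1$, the stalk of $\overline{M}$ at the origin is $\mathbf{N}$ while the stalk at every nearby point is $0$, so no neighborhood of the origin makes $\overline{M}$ constant. What is true---and what the paper actually uses---is weaker: there is a chart $\overline{P} \rightarrow \overline{M}$ from a \emph{constant} sheaf whose induced map $\overline{P} \rightarrow \overline{M}_\xi$ on the stalk at the chosen point $\xi$ is an isomorphism. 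Arranging this ``exactness at $\xi$'' is where finite presentation of finitely generated integral monoids (R\'edei's theorem, the paper's citation of \cite{RGS}) enters, and the chart map is emphatically not an isomorphism of sheaves away from $\xi$.

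Second, your treatment of injectivity is incorrect even for a map of two fixed finitely generated monoids: injectivity of a monoid homomorphism is \emph{not} detected by finitely many equalities among the images of generators. For instance $\mathbf{N}^2 \rightarrow \mathbf{N}$, $(a,b) \mapsto 2a+3b$, is injective on the generators $(1,0)$ and $(0,1)$ but identifies $(3,0)$ with $(0,2)$; monoid homomorphisms have no kernel-type object controlling injectivity. This is precisely the delicate point, and it is why the paper does not argue surjectivity and injectivity separately. Instead it chooses charts $\overline{P} \rightarrow \overline{M}$ and $\overline{P}' \rightarrow \overline{M}'$ that are isomorphisms onto the stalks at $\xi$ and fit into a commutative square with $\overline{\alpha}$; since $\overline{\alpha}_\xi$ is an isomorphism, the induced map $\overline{P} \rightarrow \overline{P}'$ of constant sheaves is an isomorphism, whence $M$ and $M'$ are the logarithmic structures associated to one and the same quasi-logarithmic structure on a neighborhood of $\xi$, and so are isomorphic there. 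To repair your proof you would need to replace both steps with an argument of this kind---spreading out the stalk-level isomorphism (or its inverse) using finite presentation---rather than checking conditions on generators over a region where the sheaves are pretended to be constant.
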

\begin{proof}
It is sufficient to show that a morphism that is an isomorphism at a geometric point is an isomorphism in an \'etale neighborhood of that point.  Choose charts $\o P \rightarrow \o M$ and $\o P' \rightarrow \o M'$ near a geometric point $\xi$,%
\footnote{By a chart $\o P \rightarrow \o M$ we mean a homomorphism from a constant sheaf of monoids $\o P$ to $\o M$ such that if $P$ is defined to be the extension of $\o P$ by $\mathcal{O}_S^\ast$ obtained by pulling back $M \rightarrow \o M$, the associated logarithmic structure of $P \rightarrow M \rightarrow \mathcal{O}_S$ is $M$.}
fitting into a commutative diagram:
\begin{equation*} \xymatrix{
\o P \ar[r] \ar[d] & \o M \ar[r] \ar[d] & \o M_\xi \ar[d] \\
\o P' \ar[r] & \o M' \ar[r] & \o M'_\xi
} \end{equation*}
As the monoids $\o M_\xi$ and $\o M'_\xi$ are of finite presentation (because they are finitely generated~\cite[Theorem~5.12]{RGS}), we can choose $\o P$ and $\o P'$ so that the maps $\o P \rightarrow \o M_\xi$ and $\o P' \rightarrow \o M'_\xi$ are isomorphisms, at least after shrinking the \'etale neighborhood of $\xi$.  But then $\o P \rightarrow \o P'$ is an isomorphism and $M$ and $M'$ are therefore the logarithmic structures associated to the same quasi-logarithmic structure.  In particular, they are isomorphic.
\end{proof}

\section{Explicit formulas:  by Sam Molcho} \label{app:calc}


The purpose of this appendix is to show that under certain reasonable simplifying assumptions, the minimal logarithmic structure constructed in the paper admits a rather concrete, simple description. Specifically, we will study minimal logarithmic structures in the case where we have a family of maps 
\begin{align} \label{eqn:A1} \vcenter{
\xymatrix{X \ar[r]^f \ar[d]_{\pi} & V \ar[ld] \\ S}
} \end{align}
over a geometric point $S = \Spec k$, with $\pi$ being flat and having reduced fibers, and where the logarithmic structure on $V$ is a Zariski log structure. To our knowledge, these assumptions hold in all previous work where minimal logarithmic structures have been studied, e.g the papers, Gross-Siebert \cite{GS}, Abramovich-Chen \cite{AC}, Chen \cite{Chen}, Olsson \cite{OAV}, and Ascher-Molcho \cite{AM}. In fact, in these papers the authors always begin with flat, proper families of schemes with reduced fibers, and construct a minimal logarithmic structure over each geometric point by writing down an explicit formula and then prove that a logarithmic structure is minimal over a general family if and only if it restricts to a minimal logarithmic structure over each point. In the cited papers, what are actually considered are ``absolute" families 
\begin{align*}
\xymatrix{X \ar[r]^f \ar[d]_{\pi} & V \\ S}
\end{align*}
We will show that in this absolute situation the notion of minimality defined in the present paper and the notion given in \cite{GS} (or, in fact, an evident extension of this notion) coincide.\footnote{The formula of \cite{GS} was only presented for families of nodal curves, but it works for more general $X$ and the agreement we prove here holds in that generality.}
 
\subsection{Structure of logarithmically smooth morphisms}
\label{sec:A-structure}

Consider a logarithmically smooth morphism $\pi: (X,M_X) \rightarrow (S,M_S)$ which is flat, proper, and has reduced fibers, with $S=\Spec k$ a geometric point. The geometry of $(X,M_X)$ may be rather complicated; however, it is simple to understand at the loci where the relative characteristic $\o{M}_{X/S}$ has rank $0$ or $1$. Specifically, we have 
	\begin{theorem} \label{thm:local-str}
Suppose $\pi:(X,M_X) \rightarrow (S,M_S)$ is flat, proper, and has reduced fibers, and that $S = \Spec k$ is a geometric point. Then 
\begin{enumerate}
\item Each irreducible component of $X$ is smooth near its generic point $\eta$, and $\o M_{X,\eta}=\o M_S$. \\
\item Whenever two irreducible components intersect, they intersect in a divisor of each, which we will call a node. A node then generically is the intersection of \emph{precisely two} irreducible components, and if $q$ denotes the generic point of an irreducible component of the node, we have $M_{X,q} = M_S \oplus_{\NN} \NN^2$, where $\NN \rightarrow \NN^2$ is the diagonal map and $\rho_q: \NN \rightarrow M_S$ is some homomorphism determined by $\pi$. \\
\item There are certain divisors on the smooth locus of $X$ so that at the generic point $p$ of such a divisor, we have $M_{X,p} = M_S \oplus \NN$.
\end{enumerate}
\end{theorem}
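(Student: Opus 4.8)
The plan is to read Theorem~\ref{thm:local-str} as a local structure result for $\pi$, which I treat under the standing hypothesis of the section that $\pi$ is logarithmically smooth, and to deduce the three cases from the combinatorics of a toric local model constrained by reducedness. Since $S=\Spec k$ is a geometric point, I would first fix a chart $Q=\overline{M}_S\to M_S$ splitting the log structure, with $Q$ a fine sharp monoid. Then I would invoke Kato's structure theorem for logarithmically smooth morphisms \cite{Kato}: \'etale locally near a point $x\in X$ there is a chart $P\to M_X$, which may be taken with $P=\overline{M}_{X,x}$, and a homomorphism $\theta:Q\to P$ injective on associated groups, such that the induced map $h:X\to Z:=S\times_{\Spec \mathbf{Z}[Q]}\Spec \mathbf{Z}[P]$ is classically smooth. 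Because $S=\Spec k$ maps to $\Spec \mathbf{Z}[Q]$ through the augmentation sending $Q_+=Q\smallsetminus\{0\}$ to $0$, one finds $Z=\Spec\bigl(k[P]/(\theta(Q_+))\bigr)$. As $h$ is smooth, hence faithfully flat onto its open image, reducedness of $X$ descends, so $k[P]/(\theta(Q_+))$ is reduced. This reduces the whole theorem to classifying maps $\theta:Q\to P$ of fine sharp monoids with reduced monoid algebra, stratified by the rank of the relative characteristic $\overline{M}_{X/S,x}$, the cokernel of $\theta$.

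Next I would record the reducedness criterion: $k[P]/(\theta(Q_+))$ is the quotient by the monomial ideal $I$ generated by $\{\chi^{\theta(q)}:q\in Q_+\}$, and it is reduced if and only if $I$ is radical, i.e.\ $np\in I$ implies $p\in I$ for $p\in P$ and $n\ge 1$. In relative rank $0$ one has $P=Q$, so the locus where $\overline{M}_{X/S}=0$ is open and strict over $S$, hence classically smooth (strict plus logarithmically smooth implies smooth); in the local model this locus is the union of the dense torus orbits of the components of the reduced fiber, so it meets every irreducible component in a dense open. The generic point $\eta$ of each component therefore lies in it, giving $\overline{M}_{X,\eta}=\overline{M}_S$ together with smoothness of $X$ near $\eta$, which is assertion (1).

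For relative rank $1$, torsion-freeness gives $P^{\rm gp}\cong Q^{\rm gp}\oplus\mathbf{Z}$, and I would run the radical criterion to classify $P$ over $Q$: a generator $u$ of relative degree $1$ subject to a relation $nu\in\theta(Q_+)$ for some $n\ge 1$ would make $\chi^u$ a nonzero nilpotent, so such relations are forbidden, and chasing this constraint leaves exactly two monoids up to isomorphism over $Q$. Either $P=Q\oplus\mathbf{N}$, whose fiber $k[P]/(\theta(Q_+))=k[\mathbf{N}]$ is $\mathbf{A}^1$, a smooth point carrying one extra boundary direction, yielding case (3); or $P=Q\oplus_{\mathbf{N}}\mathbf{N}^2$ with $\mathbf{N}\to\mathbf{N}^2$ the diagonal and $\mathbf{N}\to Q$ a map $\rho_q$ with $\rho_q(1)=t\neq 0$, whose fiber is $k[x,y]/(xy)$, the node. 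Promoting these characteristic-level computations to the split log structures at the point gives $M_{X,p}=M_S\oplus\mathbf{N}$ and $M_{X,q}=M_S\oplus_{\mathbf{N}}\mathbf{N}^2$. From the local model $xy=0$, the node locus $\{x=y=0\}$ is a Cartier divisor in each of its precisely two branches, which supplies the remaining assertions of (2); loci where three or more components meet carry relative characteristic of rank $\ge 2$ and hence lie away from the generic point of the node.

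The main obstacle is the rank~$1$ classification: showing that reducedness of $k[P]/(\theta(Q_+))$ forces $P$ to be one of the two listed monoids, and correctly matching the two survivors to ``smooth divisor'' versus ``node.'' Everything else—Kato's structure theorem, descent of reducedness along $h$, and the rank-$0$ density argument—is formal, so the content concentrates in ruling out all other rank-$1$ monoids via the radical-ideal criterion. A secondary point that I would handle carefully is the passage from characteristic monoids to the actual stalks $M_{X,q}$ and $M_{X,p}$, which relies on the splitting of the logarithmic structures available over the geometric point $S$.
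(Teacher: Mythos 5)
Your first stage is essentially the paper's own: both arguments invoke Kato's chart criterion to reduce, \'etale locally, to a model $Z = \operatorname{Spec}\bigl(k[P]/(\theta(Q_+))\bigr)$ receiving a smooth strict map from $X$, and both transfer the three assertions along that smooth map (the paper phrases this as reducing to the fiber of a toric morphism $X(F,N) \rightarrow X(\kappa,Q)$ over the torus fixed point). The divergence is in the second stage. The paper does not carry out the rank-$0$/rank-$1$ classification itself: it translates it into a statement about cones ($\tau$ mapping isomorphically onto $\kappa$; $\sigma$ mapping onto $\kappa$ with relative dimension $1$ having one or two faces isomorphic to $\kappa$) and cites Lemmas 3.11--3.12 of \cite{AM}, equivalently Lemmas 8--9 of \cite{GM2}, for exactly that classification. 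You propose instead to prove the classification directly from the radical-ideal criterion, but you never actually do it: ``chasing this constraint leaves exactly two monoids up to isomorphism over $Q$'' \emph{is} the content of those lemmas, and you flag it yourself as the main obstacle. As written, the proposal is a reduction to the key lemma, not a proof of the theorem.

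The gap is not cosmetic, because in the generality you set up --- fine sharp monoids, with no saturation hypothesis --- the classification you assert is false. Take $Q = 0$ and $P = \langle 2,3\rangle \subset \mathbf{N}$: then $\theta(Q_+)$ generates the zero ideal, $k[P] = k[t^2,t^3]$ is a domain (in particular reduced), the relative rank is $1$, and there is no relation $nu \in \theta(Q_+)$ to forbid, yet $P$ is neither $Q \oplus \mathbf{N}$ nor $Q \oplus_{\mathbf{N}} \mathbf{N}^2$. Geometrically, the cuspidal curve with its $\langle 2,3\rangle$ log structure satisfies Kato's criterion over the trivial log point in the \emph{fine} category (the identity chart works, and $\operatorname{coker}(\theta^{\rm gp}) = \mathbf{Z}$ is torsion free), so your radical-ideal ``chase'' cannot rule it out. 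Saturation is what kills such examples, and the paper gets it silently by passing to toric varieties, i.e.\ to cones and fs monoids, which is the setting of the cited lemmas; your argument never invokes it. Two further steps are asserted where reducedness is actually doing work: ``torsion-freeness gives $P^{\rm gp} \cong Q^{\rm gp}\oplus\mathbf{Z}$'' (Kato's criterion permits invertible torsion in $\operatorname{coker}(\theta^{\rm gp})$, e.g.\ $\theta : \mathbf{N} \rightarrow \mathbf{N}^2$, $\theta(1) = (2,2)$, and one must use reducedness to exclude it), and ``in relative rank $0$ one has $P = Q$'' (false without reducedness, e.g.\ $Q = \langle 2,3\rangle \subset P = \mathbf{N}$ has trivial relative characteristic; the reduced case needs the argument that the fiber is then a point, so the irreducibles of $P$ lie in $\theta(Q)$). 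To repair the proposal you would need to impose saturation, use it, and then genuinely carry out the two-monoid classification --- which is precisely the part the paper outsources to \cite{AM} and \cite{GM2}.
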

 
The divisors in (3) are the higher dimensional analogue of marked points of logarithmic curves. In other words, the structure of a logarithmic morphism on the generic points of codimension $0$ and $1$ strata is precisely the same as the structure of a logarithmic curve, as discussed by F.\ Kato. To see why these claims are true, we apply the chart criterion for logarithmic smoothness \cite{K}, to obtain \'etale locally a commutative diagram 
\begin{align*}
\xymatrix{ X \ar[r] & S \times_{\Spec{\ZZ[Q]}} \Spec{\ZZ[P]} \ar[r] \ar[d] & \Spec \ZZ[P] \ar[d] \\ & S \ar[r] & \Spec \ZZ[Q]}
\end{align*}
\noindent where the square is cartesian and the morphism from $(X,M_X)$ to the fiber product with its induced logarithmic structure is smooth on the level of underlying schemes and strict. Since smooth morphisms preserve the information of how irreducible components intersect, the problem reduces to proving these claims for the fiber of a toric morphism of toric varieties $X(F,N) \rightarrow X(\kappa,Q)$ over the torus fixed point of $X(\kappa,Q)$. Here we use the notation $X(F,N)$ for the toric variety determined by a fan $F$ in a lattice $N$, and similarily for $X(\kappa,Q)$, where we may assume $\kappa$ is a single cone. A generic component of the fiber over the fixed point of $X(\kappa)$ then corresponds to a cone $\tau \in F$ such that $\tau$ maps isomorphically to $\kappa$, and a divisor in the fiber corresponds to a cone $\sigma$ that maps onto $\kappa$ with relative dimension $1$. Condition (1) then is equivalent to saying that $\tau \cap N$ is isomorphic to $\kappa \cap Q$, since the duals of these monoids are charts for the logarithmic structures $M_{X,\eta}$ and $M_S$. Conditions $(2)$ and $(3)$ are equivalent to saying that every cone $\sigma$ that maps onto $\kappa$ with relative dimension $1$ can have either one or two faces isomorphic to $\kappa$; and, (2) if $\sigma$ has two such faces, then that $N \cap \sigma \cong (Q \cap \kappa) \times_\NN \NN^2$ for some homomorphism $e_q: Q \cap \kappa \rightarrow \NN$, while if $(3)$ $\sigma$ has precisely one such face, then $\sigma \cap N \cong (Q \cap \kappa)\times \NN$. These statements, and the construction of the homomorphism $e_q$ are precisely the content of lemmas 3.11 and 3.12 of Ascher-Molcho \cite{AM}, or equivalently lemmas 8 and 9 in Gillam-Molcho \cite{GM2} from which lemmas 3.11 and 3.12 are derived. \\

The description given above holds \'etale locally on $X$, for the \'etale sheaf $M_X$. In what follows, we also need to understand the induced Zariski sheaf $\tau_*M_X$ on $X$, where $\tau$ denotes the morphism of sites $\et(X) \rightarrow \zar(X)$---c.f.\ lemma 4.6 and proposition 4.7. We claim that the description of $\tau_*M_X$ over the generic points $\eta$ of irreducible components of $X$ and over generic points $q$ of irreducible components of nodes is only slightly more involved. Specifically, we have 
\begin{corollary}  \label{cor:pushforward}
The logarithmic structure $\tau_*M_X$ satisfies $\tau_*M_{X,\eta} = M_S$ on the generic point of an irreducible component $\eta$. On the generic point of an irreducible component of a node, we have either
\begin{enumerate}
\item $\tau_*M_{X,q} = M_S \oplus_\NN \NN^2$, when the node is the intersection of two distinct irreducible components of $X$ in the Zariski topology, or \\
\item $\tau_*M_{X,q}=M_S$, when the node is the self-intersection of a single irreducible component.
\end{enumerate} 
\end{corollary}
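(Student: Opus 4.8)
The plan is to deduce the corollary directly from the étale-local description of $M_X$ provided by Theorem~\ref{thm:local-str}, by computing the stalks of $\tau_* M_X$ at the points $\eta$ and $q$ by hand. The basic mechanism is that the stalk of $\tau_* M_X$ at a Zariski point $x$ is the colimit $\varinjlim_{x \in U} \Gamma(U, M_X)$ of \emph{étale} global sections taken over shrinking Zariski neighborhoods $U$ of $x$. The guiding principle I would use is that such a global section must be symmetric with respect to the branches of $X$ passing through $x$: if $\nu : X^\nu \to X$ is the normalization, the branches at $x$ are the points of $X^\nu$ over $x$, and the étale-local generators of $\o M_X$ furnished by Theorem~\ref{thm:local-str} are indexed by these branches. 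Computing $\tau_* M_{X,x}$ then reduces to identifying the submonoid of branch-symmetric sections in each of the three cases.

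First I would dispose of the generic point $\eta$ of an irreducible component: by Theorem~\ref{thm:local-str}(1) the scheme $X$ is smooth there, so there is a single branch, and $\o M_{X,\eta} = \o M_S$ already étale-locally; since $M_S$ is pulled back from $S$ there is nothing to symmetrize and $\tau_* M_{X,\eta} = M_S$. At the generic point $q$ of an irreducible component of a node, Theorem~\ref{thm:local-str}(2) gives $M_{X,q} = M_S \oplus_\NN \NN^2$ étale-locally, with the two standard generators $e_1, e_2$ of $\NN^2$ attached to the two branches meeting at $q$ and the diagonal $\NN \to \NN^2$ glued to $M_S$ via $\rho_q$. In case (1) the two branches lie on distinct Zariski-irreducible components, hence are already separated over a small Zariski neighborhood of $q$; both $e_1$ and $e_2$ are genuine Zariski sections, no symmetry is imposed, and $\tau_* M_{X,q} = M_S \oplus_\NN \NN^2$. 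In case (2) the node is a self-intersection of a single component, so the two points of $X^\nu$ over $q$ are exchanged by the nontrivial automorphism of the double cover separating the branches; this action fixes $M_S$ and swaps $e_1$ and $e_2$. The invariant submonoid is generated by $M_S$ together with $e_1 + e_2$, but $e_1 + e_2$ is the image of the diagonal generator and so already equals $\rho_q(1) \in M_S$; using the torsion-freeness of $\NN^2$ in its group completion one checks $c\,e_1 = c\,e_2$ forces $c = 0$, so the invariants are exactly $M_S$ and $\tau_* M_{X,q} = M_S$.

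The main obstacle I expect is making the branch-symmetry principle precise, namely justifying the identification of $(\tau_* M_X)_q$ with the symmetric part of the étale stalk. This requires controlling the branches of the henselization $\mathcal{O}_{X,q}^h$ — in particular that a self-node contributes exactly two branches exchanged by a single involution of the separating cover, which follows from the étale-local model $M_S \oplus_\NN \NN^2$ together with the reducedness of the fibers — and verifying that an étale section over a Zariski neighborhood of $q$ descends precisely to such a branch-symmetric section. Once this descent statement is established, the monoid-level invariant computations above are immediate and complete the proof.
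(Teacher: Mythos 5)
Your proposal is correct and is, in essence, the paper's own argument: both identify $\tau_\ast M_{X,q}$ with the invariants of the \'etale stalk $M_S \oplus_\NN \NN^2$ under the symmetries permitted by descent, observe that any such symmetry fixes $M_S$ and $e_1 + e_2$ (hence is either the identity or the transposition of $e_1$ and $e_2$), and then decide case by case whether the transposition can occur; your invariant computations ($M_S \oplus_\NN \NN^2$ in case (1), the diagonal copy of $M_S$ in case (2)) agree with the paper's. The one substantive difference lies in how the step you flag as the ``main obstacle'' is discharged. The paper never introduces the normalization or a deck transformation of a separating double cover: it works over $\Spec \mathcal{O}_{X,q}$, notes that $\tau_\ast M_X$ is determined from $M_U$ by \'etale descent --- so that $\tau_\ast M_{X,q}$ is the submonoid of $M_S \oplus_\NN \NN^2$ invariant under all automorphisms induced by \'etale covers $V \rightarrow U$ over $X$ --- and then settles the case analysis using descent of the structure sheaf: $e_1$ and $e_2$ map under $\exp$ to the equations of the two branches, which are distinct elements of $\mathcal{O}_X$ cutting out the two components when these are distinct (so no induced automorphism can interchange $e_1$ and $e_2$, and the invariants are everything), whereas for a self-node only the symmetric combinations $k e_1 + k e_2$ descend. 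This is lighter than the route you sketch: in case (2) you would need to actually exhibit an \'etale cover whose involution exchanges the two branches, and that does not simply ``follow from the \'etale-local model together with reducedness of the fibers''; it rests on the geometric fact that both branches of a self-node lie over the single minimal prime of $\mathcal{O}_{X,q}$ (equivalently, neither branch equation can be a Zariski-local function), which is the same fact the paper's structure-sheaf argument exploits directly. So your plan is sound and would close up, but the deferred descent statement is exactly where the content of the proof lives, and the paper's way of handling it --- constraining the automorphisms abstractly and then appealing to descent of $\mathcal{O}_X$ --- avoids the branch-control machinery you anticipate needing.
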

\begin{proof}
	We prove the statement about nodes first. The question is local on $X$, so we may assume for simplicity that $X$ is the spectrum of $\mathcal{O}_{X,q}$ According to theorem \ref{thm:local-str} above, we can choose an \'etale cover $U$ of $X$, and a lift $q'$ of $q$, with the property that $M_{U,q'} \cong M_S \oplus_\NN \NN^2$; here the two generators $e_1,e_2$ of $\NN^2$ correspond to the two branches of $U$ around $q'$, and map to the two functions in $\mathcal{O}_{U,q}$ which define these two branches. For every \'etale cover $V$ of $U$ over $X$, and cover $q''$ of $q'$, we have $M_{V,q''} \cong M_S \oplus_\NN \NN^2$ as well, as $M_{U,q'}$ and $M_{V,q''}$ are both pulled back from $M_{X,q}$. Thus, every \'etale cover $V$ of $U$ over $X$ induces an automorphism of $M_S \oplus_\NN \NN^2$. On the other hand, the sheaf $\tau_*M_X$ is determined from $M_U$ by descent, hence $\tau_*M_{X,q}$ is the submonoid of invariants of $M_S \oplus_\NN \NN^2$ under all automorphisms of $M_S \oplus_\NN \NN^2$ obtained by \'etale covers $V \rightarrow U$ over $X$. Every such automorphism further lives over $S$, hence must fix $M_S$; and since $e_1+e_2 \in \NN^2$ is in $M_S$, such an automorphism must fix $e_1+e_2$. We are thus looking at automorphisms of $\NN^2$ which fix $(1,1)$, that is, matrices with coefficients in $\NN$, determinant $1$, and that fix $(1,1)$. The only two such matrices are the identity and the matrix $e_1 \rightarrow e_2, e_2 \rightarrow e_1$. Thus, the invariants of $M_S \oplus_\NN \NN^2$ are either all of $M_S \oplus_\NN \NN^2$ or the diagonal $M_S \oplus_\NN \NN(e_1+e_2) \cong M_S$. To prove the corollary it remains to analyze when each case happens. Suppose first that $q$ is the intersection of two distinct irreducible components of $X$. Since $\mathcal{O}_X$ is determined from $\mathcal{O}_U$ by descent as well, we see that the images of $e_1,e_2$ in $U$ map to distinct functions in $\mathcal O_X$, which define $q$ in each of the irreducible components. Thus, there can be no \'etale cover $V$ of $U$ over $X$ which interchanges $e_1,e_2$ in the automorphism $M_S \oplus_\NN \NN^2$, and we are in the situation where the invariants are all of $M_S \oplus_\NN \NN^2$. On the other hand, if $q$ is the self intersection of a single component, the only linear combinations of $e_1$ and $e_2$ that descend are the $k e_1 + k e_2$, which lie in $M_S$.

To see the statement about the logarithmic structure over the generic points of irreducible components $\eta$, we simply observe that the invariants of $M_S$ over $M_S$ are always $M_S$, hence $\tau_*M_{X,\eta} = M_S$ as well.  
\end{proof}

\subsection{The Minimal Log Structure Over a Geometric Point}

We consider a family of logarithmic schemes 
\begin{align} \vcenter{
\xymatrix{X \ar[r]^f \ar[d]_{\pi} & V \ar[ld] \\ S}
} \end{align}
\noindent with $S$ a geometric point, and $\pi$ a logarithmic smooth morphism. The morphism $f$ is not assumed to be a logarithmic morphism. We write $M_S$ for the logarithmic structure on $S$, $M_X$ for the logarithmic structure on $X$, and $M_V$ for the logarithmic structure on $V$. We further write $M=f^*M_V$ for the pullback of the logarithmic structure on $V$ to $X$. 
	
	Out of $X$, we can create a category $\mathcal{C}$.  The objects of $\mathcal C$ are the generic points of the strata in the minimal stratification on which the relative characteristic $M_{X}$ is locally constant.  Note that $\tau_\ast M_{X/S}$ is constant on these strata.  We have a morphism $x \rightarrow y$ in $\mathcal{C}$ whenever $\{x\} \in \bar{\{y\}}$.  

In the special case when $X$ is a nodal curve, the category $\mathcal{C}$ is rather simple, with one object for each irreducible component $\eta$ of $X$ and an object for each node or marked point $q$, and a morphism $q \rightarrow \eta$ whenever $q$ belongs to the component $\eta$. In fact, the same characterization holds for general $X$ in depths $0$ and $1$: depth $0$ objects correspond to irreducible components of $X$, and depth $1$ objects correspond to either marked divisors in $X$ or nodes where irreducible components of $X$ intersect, according to theorem \ref{thm:local-str}.

\begin{lemma} \label{lem:colimit-pushforward}
	Let $F$ be a sheaf in the \'etale topology on $X$ that is pulled back from a sheaf in the Zariski topology, which is constructible with respect to the category $\mathcal{C}$. Then 
\begin{align*}
\pi_{!}(F) = \varinjlim_{x \in \mathcal{C}}F_x
\end{align*}
\end{lemma}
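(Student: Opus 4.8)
The plan is to prove the identity by comparing universal properties. Since $\pi_!$ is by construction left adjoint to $\pi^\ast$, the sheaf $\pi_! F$ represents the functor $A \mapsto \Hom_{\et(X)}(F, \pi^\ast A)$ on (sheaves of monoids over) $S$; and because $S$ is a geometric point, such an $A$ is just a monoid and $\pi^\ast A = \underline A$ is the constant sheaf. It therefore suffices to exhibit, naturally in $A$, an identification
\[
\Hom_{\et(X)}(F, \underline A) \;\simeq\; \Hom\bigl(\varinjlim_{x \in \mathcal C} F_x,\, A\bigr),
\]
after which Yoneda yields $\pi_! F = \varinjlim_{x\in\mathcal C} F_x$. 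Here $x \mapsto F_x$ is genuinely a covariant functor on $\mathcal C$: a morphism $x \to y$ records a specialization $\{x\}\subset\overline{\{y\}}$, hence a generization map $F_x \to F_y$ on stalks, so the colimit is well posed; by the adjunction between colimit and the constant diagram, $\Hom(\varinjlim_x F_x, A)$ is the set of families of homomorphisms $\phi_x : F_x \to A$ satisfying $\phi_y\circ(F_x\to F_y)=\phi_x$ for every $x\to y$ in $\mathcal C$.

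First I would pass to the Zariski site. By hypothesis $F = \tau^\ast F'$ is pulled back from a Zariski sheaf $F'$, and $\underline A$ is likewise pulled back, so Lemma~\ref{lem:zar-pull} gives $\Hom_{\et(X)}(F,\underline A) = \Hom_{\zar(X)}(F',\underline A)$; alternatively one may invoke Proposition~\ref{prop:zar-push}, which identifies $\pi_! = \pi_!^{\et}\tau^\ast$ with the left adjoint of $\pi^\ast$ on Zariski sheaves. This reduction is exactly where the standing hypotheses are consumed: since $F'$ is constructible with respect to $\mathcal C$ and each stratum $X_x$ is irreducible, the restriction $F'\rest{X_x}$ is a locally constant sheaf on an irreducible (hence hyperconnected) space, so it is \emph{constant} with value $F_x$. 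There is no monodromy within a stratum, and the whole sheaf $F'$ is recorded by the stalks $F_x$ together with the generization maps attached to the morphisms of $\mathcal C$.

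Second I would compute $\Hom_{\zar(X)}(F',\underline A)$ stratum by stratum. Restricting a morphism $\phi: F'\to\underline A$ to $X_x$ gives a morphism of constant sheaves on the connected $X_x$, that is, a homomorphism $\phi_x : F_x \to A$; naturality of stalks under generization forces $\phi_y\circ(F_x\to F_y)=\phi_x$ for each $x\to y$. Conversely, the $X_x$ form a finite partition of $X$ into irreducible locally closed subsets whose incidence is precisely the specialization order recorded by $\mathcal C$, so a compatible family $(\phi_x)$ glues uniquely to a morphism $F'\to\underline A$. Thus $\Hom_{\zar(X)}(F',\underline A)$ is exactly the set of compatible families described above, which is $\Hom(\varinjlim_{x\in\mathcal C}F_x,A)$. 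Combined with the first step and Yoneda, this gives the lemma. (For the monoid-valued case one argues identically; the formation of $\varinjlim$ is taken in monoids, matching the construction of $\pi_!$ on sheaves of monoids.)

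The hard part will be the constructible-sheaf bookkeeping in the second step: the clean claim that a morphism from a sheaf constant on each stratum to a constant sheaf is the same datum as a family of stalk homomorphisms compatible along generization. This is the algebraic shadow of describing constructible sheaves as representations of the specialization (exit-path) category $\mathcal C$, and it is precisely the point at which one must know that $F'$ is \emph{constant}, not merely locally constant, on each stratum. That constancy is bought by the irreducibility of the strata, which in turn rests on the Zariski and reducedness hypotheses (and on Corollary~\ref{cor:pushforward} for the local structure of the strata); everything else is a formal manipulation of adjunctions.
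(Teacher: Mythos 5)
Your proposal is correct and takes essentially the same route as the paper's own proof: both reduce to Zariski sheaves via Proposition~\ref{prop:zar-push} (you also note Lemma~\ref{lem:zar-pull} as an alternative), observe that a sheaf on the geometric point $S$ is just a monoid whose pullback is the constant sheaf, and identify morphisms into a constant sheaf with generization-compatible families of stalk homomorphisms, i.e.\ with morphisms out of $\varinjlim_{x \in \mathcal C} F_x$. The only difference is one of detail: you spell out the stratum-by-stratum justification (constancy of the sheaf on each irreducible stratum, then gluing) for the step the paper simply asserts.
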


\begin{proof}
	By Proposition~\ref{prop:zar-push}, $\pi_{!}$ is the left adjoint to $\pi^{*}_{\zar}$ the pullback functor \emph{on Zariski sheaves}, so we only need to verify that for any sheaf $G$ on $S$, $\Hom (\varinjlim_{x \in \mathcal{C}}F_x, G) = \Hom (F, \pi^{*}G)$. A sheaf $G$ on $S$ is simply a monoid, so $\pi^{*}G$ is the constant sheaf on $X$ associated to $G$. Thus, to give a homomorphism $F \rightarrow \pi^{*}G$ is equivalent to giving homomorphims $F_x \rightarrow G$ for each $x \in \mathcal{C}$ which are compatible with generization; but this is precisely the data of a homomorphism $\varinjlim_{x \in \mathcal{C}} F_x \rightarrow G$.  
\end{proof}

Observe furthemore that the colimit over a finite indexing category only depends on the objects of depths $0$ and $1$, i.e in this case over the irreducible components $\eta$ of $X$, the generic points $q$ of the nodes of $X$, and the generic points of the marked divisors of $X$.  Note that the marked divisors do not contribute to the colimit.  The reason is that for each $p \in \mathcal{C}$ corresponding to a marked divisor, there is a unique morphism $p \rightarrow \eta$, where $\eta$ corresponds to the irreducible component containing the marked divisor. So the points corresponding to marked divisors can be ommited from the diagram without affecting the colimit. The same is true for nodes of $X$ which are the self intersection of a single irreducible component. 

From here on, $\eta$ is always going to denote the generic point of an irreducible component of $X$, and $q$ is always going to denote the generic point of an irreducible component of the intersection of two \emph{distinct} irreducible components.

In what follows, we will replace $M_X$ with the sheaf $\tau^*\tau_*M_X$. Note that while $X$ equipped with $\tau^*\tau_*M_X$ is no longer log smooth over $(S,M_S)$, the minimal log structure obtained from $\tau^*\tau_*M_X$ and the minimal log structure obtained from $M_X$ coincide, according to \ref{prop:zar-GS}. The reason we do this replacement is because this allows us to use $\mathcal{C}$ in the calculation of the minimal log structure, according to lemma \ref{lem:colimit-pushforward}, rather than the far larger category of all \'etale specializations. Furthermore, according to the preceeding remark, only irreducible components $\eta$ and nodes $q$ need to be included in the calculation and there we have $\tau^*\tau_* M_{X,\eta} = M_{X,\eta}, \tau^*\tau_*M_{X,q} = M_{X,q}$ according to corollary \ref{cor:pushforward}. So this is in fact not a serious abuse of notation. 

This allows us to work an explicit presentation for the minimal logarithmic structure. We first determine its associated group. Let us recall the notation. Diagram~\eqref{eqn:A1} gives us a diagram 
\begin{align*}
\xymatrix{
M_{X/S}^{\rm gp} & \o M_X^{\rm gp} \ar[l] &  \ar[l] \ar@/_20pt/[ll]_u M^{\rm gp} \\
& \pi^*M_S^{\rm gp} \ar[u] \ar[ur]_{i}}
\end{align*} 

\noindent where $M = f^{*}M_V$ and $u$ is a given homomorphism which is fixed in advance, part of the discrete geometric data of the problem -- the type of the morphism. The problem of finding a minimal logarithmic structure of the data is the same as finding a minimal object $N_S$ such that $u$ factors through a homomorphism $M^{\rm gp} \rightarrow N_S^{\rm gp} \oplus_{\pi^*M_S^{\rm gp}} M_X^{\rm gp}$. To find the minimal logarithmic structure, we set

\begin{align*}
\o R_{0}^{\rm gp} = \o M_X^{\rm gp} \times_{\o M_{X/S}^{\rm gp}} \o M^{\rm gp}
\end{align*}

\noindent and 

\begin{align*}
\o R^{\rm gp} = \o R_0^{\rm gp} / \Delta (\pi^{*}\o M_S^{\rm gp})
\end{align*}
where $\Delta$ is the diagonal map $\pi^\ast \o M_S^{\rm gp} \rightarrow \o M_X^{\rm gp} \times_{\o M_{X/S}^{\rm gp}} \o M^{\rm gp}$.

Applying the lemma, we see that the associated  group of the minimal logarithmic structure $\pi_! \o R^{\rm gp}$ on $S$ is given as the coequalizer

\begin{align*}
\varinjlim_{x \in C} \o R_x^{\rm gp} = \varinjlim \Bigl( \xymatrix{\displaystyle \bigoplus_{q} \o R^{\rm gp}_q \ar[r]<2pt>^<>(0.5){\phi_{1}} \ar[r]<-2pt>_<>(0.5){\phi_2} & \displaystyle \bigoplus_{\eta} \o R_\eta^{\rm gp} \Bigr) } 
\end{align*}  

\noindent where $\phi_1,\phi_2$ denote the two generization maps, induced by the generization morphisms $M_{X,q} \rightarrow M_{X,\eta}$ and $M_q \rightarrow M_\eta$ whenever $q$ is contained in $\eta$. If we choose an ordering of the two components $\eta_1,\eta_2$ containing a node $q$, the coequalizer becomes the quotient 

\begin{align*}
\xymatrixcolsep{5pc}
\bigoplus_{q} \o R_q^{\rm gp} \xrightarrow{\phi_2-\phi_1} \bigoplus_{\eta} \o R_\eta^{\rm gp} 
\end{align*} 

On the other hand, even though $R$ may be difficult to understand, its stalks at generic points and nodes are straightforward. We have 
\begin{align*}
\o R_{0,\eta}^{\rm gp} = \o M^{\rm gp}_{X,\eta} \times \o M^{\rm gp}_\eta \cong \o M^{\rm gp}_S \times \o M_{\eta}^{\rm gp} 
\end{align*}
and so 
\begin{align*}
\o R_{\eta}^{\rm gp} =\o M_{\eta}^{\rm gp}
\end{align*}
Similarily, on a node $q$ we have 
\begin{align*}
\o R_{0,q}^{\rm gp} = \o M_{X,q}^{\rm gp} \times_\ZZ \o M_{q}^{\rm gp}
\end{align*}
Now recall that $M_{X,q} = M_{S} \oplus_\NN \NN^2 = \{(a,b):b-a=\rho_qd\} \subset M_S \times M_S$ where $\rho_q:\NN \rightarrow M_S$ is a homomorphism determined by $\pi$, as discussed in section~\ref{sec:A-structure}. We abusively also write $\rho_q$ for the image $\rho_q(1) \in M_S$ of $\rho_q$. The morphism $M_{X,q}^{\rm gp} \rightarrow \ZZ$ is the canonical projection $M_{X,q}^{\rm gp} \rightarrow M_{X/S}^{\rm gp}$, which, explicitly, is the homomorphism that sends a pair $(a,b)$ such that $b-a = \rho_q d$ to $d$.  Therefore, 
\begin{align*}
\o R_{0,q}^{\rm gp} = \bigl\{(a,b,m): b-a =u_q(m)\rho_q\bigr\}
\end{align*}
The morphism $\pi^{*}M_S^{\rm gp} \rightarrow R_{0,q}^{\rm gp}$ is the diagonal $s \mapsto (s,s,i(s))$, and hence the quotient 
\begin{align*}
\o R_q^{\rm gp} \cong \o M_q^{\rm gp}
\end{align*}
\noindent the isomorphism sending $[(a,b,m)] \mapsto m-i(a)$, with inverse $m \mapsto [0,u_q(m)\rho_q,m]$. The two generization morphisms $\o R_{0,q}^{\rm gp} \rightarrow \o R_{0,\eta}^{\rm gp} \rightarrow \o R_{\eta}^{\rm gp}$ are the two natural maps from $\o M_{X,q}^{\rm gp} \times_\ZZ \o M_{q}^{\rm gp}$ to $\o M_\eta^{\rm gp}$, sending $(a,b,m)$ to $(i(a)+\phi_1(m))$ or to $(i(b)+\phi_2(m))$ respectively, depending on whether $\eta$ is the first or second irreducible component containing $q$, under our ordering. Thus, under the isomorphism above, the morphism $\o R_q^{\rm gp} \rightarrow \o R_{\eta_1}^{\rm gp} \times \o R_{\eta_2}^{\rm gp}$ becomes the map $\o M_q^{\rm gp} \rightarrow \o M_{\eta_1}^{\rm gp} \times \o M_{\eta_2}^{\rm gp}$ which sends $m$ to $\bigl(-\phi_1(m),\phi_2(m)+i(u_q(m)\rho_q)\bigr)$. The associated group of the minimal logarithmic structure thus has the very simple quotient presentation: 

\begin{align*}
\bigoplus_{q} \o M_q^{\rm gp} \xrightarrow{(-\phi_1,\phi_2,iu_q\rho_q)} \bigoplus_{\eta} \o M_\eta^{\rm gp} 
\end{align*} 
\noindent 

\noindent The characteristic monoid of the actual logarithmic structure $\pi_! \o R$ is then the image of $\oplus \o M_{\eta}$ in $\pi_! \o R^{\rm gp}$.

\subsection{Gross-Siebert Minimality}
We now specialize to the case when the family of morphisms $f:X \rightarrow V$ is absolute, i.e of the form 
\begin{align} \label{eqn:A2} \vcenter{
\xymatrix{X \ar[r]^f \ar[d]_{\pi} & V \\ S}
} \end{align}
\noindent This point of view can be reconciled with that of the previous paragraph by simply redefining the logarithmic structure of $V$ to be $M'_V = M_V \oplus \pi^{*}M_S$. At any rate, to avoid confusion and keep in line with existing literature, we will denote the sheaf of monoids $f^{*}\o M_V$ by $P$, and $f^{*}M'_V$ by $M$, as above. For a node $q$, specializing two irreducible components $
\eta_1,\eta_2$, let $\chi_i:P_q \rightarrow P_{\eta_i}$ denote the two generization maps.

\begin{definition}
Let $S$ be a geometric point. A logarithmic structure $N_S$ on $S$ is called a \emph{Gross-Siebert minimal logarithmic structure} for $f$ over $M_S$ if its characteristic monoid $Q = \o N_S$ has associated group isomorphic to the cokernel of
\begin{align*}
\Phi : \bigoplus_{q} P_q^{\rm gp} \xrightarrow{(-\chi_1,\chi_2,\rho_qu_q)} \o M_S^{\rm gp} \oplus \bigoplus_{\eta} P_\eta^{\rm gp}   
\end{align*} 
and $Q$ is isomorphic to the image of $\o M_S^{\rm gp} \oplus \bigoplus_{\eta} P_{\eta}^{\rm gp}$ in the associated group. 
\end{definition}

\begin{remark}
In \cite{GS}, the minimal logarithmic structure is in fact saturated by definition, i.e $Q$ is the saturation of the monoid in the definition above. Since our results go through in case $Q$ is merely integral, the definition is stated in this slightly more general form. 
\end{remark}

\begin{remark}
In \cite{GS}, the object of study is stable logarithmic maps, in which case $X \rightarrow S$ is a logarithmic curve. In this case, there are canonical logarithmic structures on $X$ and $S$ which make $X \rightarrow S$ logarithmically smooth. Over a geometric point, the characteristic monoid of the canonical logarithmic structure on $S$ is simply $\NN^{m}$, where $m$ is the number of nodes of $X$. The definition of minimality given in \cite{GS} has this canonical logarithmic structure as $M_S$ throughout. The definition presented here is the evident modification that allows the same flexibility as the present paper.  
\end{remark}

\begin{theorem}
A Gross-Siebert minimal logarithmic structure is minimal. 
\end{theorem}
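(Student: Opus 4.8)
The plan is to show that a Gross--Siebert minimal logarithmic structure $N_S$, equipped with its tautological map, is isomorphic \emph{as an object of} $\GS(\u S)$ to the minimal object $(Q_S,\psi)$ constructed in Section~\ref{sec:reduction} and described explicitly earlier in this appendix; since minimality (Proposition~\ref{prop:minimal-criteria}) is invariant under isomorphism in $\GS(\u S)$, this suffices. First I would reduce the comparison to the level of characteristic monoids and their associated groups. Following the pattern of Sections~\ref{sec:local-case} and~\ref{sec:global-case}, where every verification of the universal property (e.g.\ the proof of Lemma~\ref{lem:minimal-existence}) was carried out on the ghost sheaves because, over the geometric point $S$, a quasi-logarithmic structure is an extension of its characteristic monoid by $k^\ast$ and the remaining data is pulled back along a map of characteristics, it is enough to produce an isomorphism $\o Q_S\cong \o N_S=Q$ compatible with the maps from $\o M_S$ and with $\o\psi$, together with the induced isomorphism of associated groups. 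By Proposition~\ref{prop:zar-GS} we may replace $M_X$ by $\tau^\ast\tau_\ast M_X$, so that Lemma~\ref{lem:colimit-pushforward} presents $\o Q_S^{\rm gp}=\pi_!\o R^{\rm gp}$ as the explicit coequalizer over $\mathcal C$.

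Second, I would specialize the presentation $\bigoplus_q \o M_q^{\rm gp}\to\bigoplus_\eta \o M_\eta^{\rm gp}$ to the absolute situation. Here $\o M=P\oplus\pi^\ast\o M_S$, so $\o M_\eta^{\rm gp}=P_\eta^{\rm gp}\oplus\o M_S^{\rm gp}$ and $\o M_q^{\rm gp}=P_q^{\rm gp}\oplus\o M_S^{\rm gp}$, and the map acquires the block form $d=\left(\begin{smallmatrix}\partial&0\\\gamma&\delta\end{smallmatrix}\right)$: on the $P$-coordinates it is the graph boundary $\partial(p)=(-\chi_1(p),\chi_2(p))$; on the $\o M_S$-coordinates it is the graph boundary $\delta$ of the dual graph together with the cross term $\gamma(p)=u_q(p)\rho_q$ inserted in the $\eta_2$-summand, reflecting that generization acts as $\chi_i$ on the $P$-factor, as the identity on the pulled-back $\o M_S$-factor, and that the type $u_q$ and the shift $i\rho_q$ see only the node direction. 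I would then identify $\mathrm{coker}(d)$ with $\mathrm{coker}(\Phi)$ through the surjection $\bar\Sigma\colon\bigoplus_\eta(P_\eta^{\rm gp}\oplus\o M_S^{\rm gp})\to\o M_S^{\rm gp}\oplus\bigoplus_\eta P_\eta^{\rm gp}$, $(c,d)\mapsto(\Sigma d,c)$, that sums the $\o M_S$-coordinates. A direct check shows $\bar\Sigma$ carries $\mathrm{im}(d)$ onto $\mathrm{im}(\Phi)$, using $\Sigma\delta=0$ and $\Sigma\gamma=\rho_q u_q$; moreover $\ker\bar\Sigma=\ker\Sigma\subseteq\mathrm{im}(d)$, the inclusion being the assertion that the boundary $\delta$ surjects onto the augmentation submodule $\ker\Sigma$, which holds because the dual graph of $X$ is connected. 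Since $\bar\Sigma$ is a surjection whose kernel lies in $\mathrm{im}(d)$, it induces an isomorphism $\mathrm{coker}(d)\cong\mathrm{coker}(\Phi)$, matching associated groups.

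Finally I would match characteristic monoids: under $\bar\Sigma$ the image of the monoid $\bigoplus_\eta\o M_\eta=\bigoplus_\eta(P_\eta\oplus\o M_S)$, which is $\o Q_S$, corresponds to the image of $\o M_S\oplus\bigoplus_\eta P_\eta$, which is $Q$ by the Gross--Siebert definition, and compatibility with the maps from $\o M_S$ and with $\o\psi$ is immediate from the construction of $\bar\Sigma$. I expect the main obstacle to be the bookkeeping of the $\o M_S$-summands in the second step---in particular verifying $\ker\Sigma=\mathrm{im}(\delta)$ and tracking the cross term $\gamma$ carefully enough to see that $\bar\Sigma$ sends $\mathrm{im}(d)$ exactly onto $\mathrm{im}(\Phi)$. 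This is also where connectivity enters, and it dovetails with the remark in Section~\ref{sec:reduction} that $\pi_!\o R^{\rm gp}$ computes $\o Q_S^{\rm gp}$ precisely when $\pi$ has connected fibers; the Gross--Siebert setting of a connected source is exactly this case.
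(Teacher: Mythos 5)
Your argument is correct, but it is not the paper's proof of this theorem---it is, in substance, the paper's proof of the \emph{next} one. The paper verifies Gross--Siebert minimality directly from the universal property: given any object of the moduli problem, i.e.\ a logarithmic structure $N_S$ on $S$ together with a logarithmic lift of $f$, the lift induces maps $P_\eta^{\rm gp} \rightarrow \o N_{X,\eta}^{\rm gp} = \o N_S^{\rm gp}$, which together with the structure map $\o M_S^{\rm gp} \rightarrow \o N_S^{\rm gp}$ assemble into a summation morphism $\Sigma : \o M_S^{\rm gp} \oplus \bigoplus_\eta P_\eta^{\rm gp} \rightarrow \o N_S^{\rm gp}$; commutativity of the diagram at each node shows that $\Sigma$ kills the image of $\Phi$, so it descends uniquely through $\mathrm{coker}(\Phi)$ (torsion-freeness of $\o N_S^{\rm gp}$ handling the sharpened quotient), and finally the monoid $Q$ is checked to land in $\o N_S$. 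That argument is short, self-contained, and uses neither the existence machinery of Section~\ref{sec:global-case} nor connectedness of $X$. You instead deduce minimality by identifying the Gross--Siebert structure with the already-minimal object $(Q_S,\psi)$ through your explicit map $\bar\Sigma$ of presentations, the key step being $\ker\bar\Sigma = \ker\Sigma = \mathrm{im}(\delta) \subseteq \mathrm{im}(d)$, which requires connectedness of the dual graph; this is exactly the computation the paper performs \emph{after} the present theorem, as the ``direct proof'' of the comparison theorem (there phrased as $H_0(B\mathcal{C},\o M_S^{\rm gp}) = \o M_S^{\rm gp}$ forcing injectivity of $\pi_!\o R^{\rm gp} \rightarrow Q^{\rm gp}$). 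Your route costs more---it invokes Lemma~\ref{lem:minimal-existence}, Proposition~\ref{prop:zar-GS}, Lemma~\ref{lem:colimit-pushforward}, and connectedness (which you correctly flag, and which is needed anyway for the identification $\o Q_S^{\rm gp} = \pi_!\o R^{\rm gp}$ you use)---but it also buys more: it simultaneously establishes the comparison theorem that the two minimal structures literally coincide, rather than obtaining that afterwards from the coincidence of universal properties. The only real hand-wave, the compatibility of the tautological maps and of the maps from $\o M_S$ under $\bar\Sigma$, is shared with the paper's own treatment and is harmless over a geometric point.
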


\begin{proof}
Let $(X,N_X) \rightarrow (S,N_S)$ be any log morphism pulled back from $(X,M_X) \rightarrow (S,M_S)$ which admits a log morphism $f$ to $V$, as in diagram~\eqref{eqn:A2}.  The morphism $f$ induces morphisms $P_{\eta}^{\rm gp} \rightarrow \o N_{X,\eta}^{\rm gp} = \o N_S^{\rm gp}$ for each $\eta$. We thus have a unique extension to a summation morphism $\Sigma: \o M_S^{\rm gp} \oplus \bigoplus_{\eta} P_{\eta}^{\rm gp} \rightarrow \o N_S^{\rm gp}$. On the other hand, for each node $q$, the diagram
\begin{align*}
\xymatrix{& P_{\eta_1}^{\rm gp} \ar[r] & \o N_S^{\rm gp} \ar@/^15pt/[dr] &  \\ P_q^{\rm gp} \ar[ur] \ar[dr] \ar@{-->}[rrr] & & & \o N_{X,q}^{\rm gp}=\{(a,b):b-a=\rho_qd\} \\ & P_{\eta_2}^{\rm gp} \ar[r] & \o N_S^{\rm gp} \ar@/_15pt/[ur] &} 
\end{align*}
must commute; thus the map $\Sigma$ must descend uniquely to the (unsharpened) quotient of $\Phi$. As $N_{S}^{\rm gp}$ is assumed torsion free, the morphism must descend to the sharpened quotient as well, i.e, the associated group of the minimal logarithmic structure, as desired. Since $\o M_S \oplus \bigoplus_{\eta} P_{\eta} \rightarrow \o N_{S}^{\rm gp}$ factors through $\o{N}_S$, its image $Q$ in $Q^{\rm gp}$ maps to $\o N_S$, as desired. 
\end{proof}

We are now ready for the comparison. 

\begin{theorem}
Suppose $(X,M_X) \rightarrow (S=\Spec \CC,M_S)$ is a logarithmic smooth morphism which is flat and has reduced fibers, and $f: X \rightarrow (V,M_V)$ is a morphism to a logarithmic scheme. The minimal logarithmic structure defined in the paper coincides with the Gross-Siebert minimal logarithmic structure. 
\end{theorem}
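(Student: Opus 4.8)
The plan is to match the explicit presentation of the paper's minimal logarithmic structure, obtained in the computation above, with the defining presentation of the Gross--Siebert minimal logarithmic structure, and then to upgrade the resulting identification of characteristic monoids to an identification of logarithmic structures. Recall that in the absolute situation we have written $M = f^\ast M_V' = f^\ast M_V \oplus \pi^\ast M_S$, so that $P = f^\ast \o M_V$ and $\o M = P \oplus \o M_S$; in particular $\o M_\eta^{\rm gp} = P_\eta^{\rm gp} \oplus \o M_S^{\rm gp}$ and $\o M_q^{\rm gp} = P_q^{\rm gp} \oplus \o M_S^{\rm gp}$ at the generic points $\eta$ of components and $q$ of nodes. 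Under this splitting the generization maps become $\phi_i(p,s) = (\chi_i(p), s)$, the second coordinate being fixed because $\o M_S$ is pulled back from $S$; moreover the type annihilates the $\o M_S$-summand, so $u_q(p,s) = u_q(p)$ and $i\bigl(u_q(p,s)\rho_q\bigr) = (0, u_q(p)\rho_q)$. First I would substitute these formulas into the cokernel presentation of $\pi_! \o R^{\rm gp}$ derived above.

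Next I would separate the two resulting families of relations. Taking $p = 0$, the $\o M_S^{\rm gp}$-summand of the source at a node $q$ joining $\eta_1$ and $\eta_2$ contributes the relation identifying the copies of $\o M_S^{\rm gp}$ attached to $\eta_1$ and $\eta_2$; these are exactly the boundary relations of the dual graph of $X$ (vertices the components, edges the nodes) with coefficients in $\o M_S^{\rm gp}$. Assuming, as we may after passing to connected components, that $X$ is connected, these relations collapse $\bigoplus_\eta \o M_S^{\rm gp}$ onto a single copy of $\o M_S^{\rm gp}$ via the summation map. Taking $s = 0$, the $P_q^{\rm gp}$-summand contributes $-\chi_1(p) + \chi_2(p) + u_q(p)\rho_q$ (the last term landing in the $\o M_S^{\rm gp}$-summand at $\eta_2$); after the collapse this is precisely $\Phi(p)$. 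Because the two families interact only through the $\o M_S^{\rm gp}$-summands, I may impose them in this order and obtain a canonical isomorphism between $\pi_! \o R^{\rm gp}$ and the cokernel of $\Phi$, i.e.\ the associated group of the Gross--Siebert logarithmic structure.

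I would then verify that the characteristic monoids match. Under the isomorphism just constructed, $\bigoplus_\eta P_\eta^{\rm gp}$ maps identically, while $\bigoplus_\eta \o M_S$ maps by summation onto $\o M_S$ (surjectively, as the map splits through any single summand). Consequently the image of $\bigoplus_\eta \o M_\eta = \bigoplus_\eta (P_\eta \oplus \o M_S)$, which is the paper's characteristic monoid, is carried exactly onto the image of $\o M_S \oplus \bigoplus_\eta P_\eta$, which is the Gross--Siebert characteristic monoid $Q$. Finally, since $S = \Spec \CC$ is the spectrum of an algebraically closed field, every fine logarithmic structure on $S$ is split, hence determined up to unique isomorphism by its characteristic monoid together with the structural map from $M_S$; as both logarithmic structures share the same characteristic monoid and the same map from $M_S$, this promotes the isomorphism of characteristic monoids to an isomorphism of logarithmic structures compatible with all the data.

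The only genuinely delicate point is the bookkeeping forced by the mismatch between the relative and absolute conventions: the paper's presentation carries a copy of $\o M_S$ at every component and every node (since $M$ absorbs $\pi^\ast M_S$), whereas the Gross--Siebert presentation carries a single global $\o M_S$. The crux is to recognize the surplus source summands $\bigoplus_q \o M_S$ as the graph-boundary relations that fuse the per-component target summands $\bigoplus_\eta \o M_S$ into one copy, which is exactly where connectedness of the fiber is used. I would also note that, granting the preceding theorem that a Gross--Siebert minimal logarithmic structure is minimal, the coincidence follows abstractly from the uniqueness of minimal objects of a fixed type; the computation above is what renders that abstract isomorphism concrete.
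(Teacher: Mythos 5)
Your proposal is correct and takes essentially the same approach as the paper, which contains both arguments you give: the universal-property observation in your closing paragraph is the paper's official one-line proof (resting on the preceding theorem that a Gross--Siebert minimal structure is minimal), and your presentation-matching computation is the paper's supplementary direct proof. The only cosmetic difference is that you impose the relations in two stages (first collapsing $\bigoplus_\eta \o M_S^{\rm gp}$ along the graph-boundary relations, then recovering $\Phi$), while the paper descends $(\Sigma,\id)$ to the quotients and proves injectivity by identifying the induced sequence of kernels $\bigoplus_q \o M_S^{\rm gp} \rightarrow \bigoplus_\eta \o M_S^{\rm gp} \rightarrow \o M_S^{\rm gp}$ with the complex computing $H_0(B\mathcal{C},\o M_S^{\rm gp})$---the same connectivity-of-the-dual-graph input in both cases, with your monoid-level comparison likewise matching the paper's final step.
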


\begin{proof}
This is immediate as both logarithmic structures satisfy the same universal property. 
\end{proof}

It is actually rather interesting to give a direct proof of this fact, as the calculation is illuminating. We consider

\begin{align*}
\xymatrix{X \ar[r]^{f} \ar[d]_{\pi} & V \\ S & }
\end{align*}

\noindent and extend it to

\begin{align*}
\xymatrix{X \ar[r]^{f} \ar[d]_{\pi} & V \ar[dl] \\ S & }
\end{align*}

\noindent as above by putting $M_V'=M_V \oplus \pi^*M_S$, $M=f^*M'_V$.\marg{should just write $V \times S$ in the second diagram; otherwise need to emphasize that $\u S$ is the final scheme} Then, $M_q = P_q \oplus M_S$ and $M_\eta = P_\eta \oplus M_S$. The morphism $m \mapsto (-\phi_1(m),\phi_2(m)+i(u_q(m)\rho_q))$ is identified with $(m,s) \mapsto (-\chi_1(m),-s,\chi_2(m),u_q(m)\rho_q + s)$. Therefore, by the results of section $1$, we have that the associated group of the minimal logarithmic structure for the family is given as the quotient

\begin{align*}
\xymatrixcolsep{10pc}\xymatrix{\displaystyle \bigoplus_{q} \bigl( P_q^{\rm gp} \oplus M_S^{\rm gp} \bigr) \ar[r]^<>(0.5){(-\chi_1,-id,\chi_2,id+u_q\rho_q)} & \displaystyle \bigoplus_{\eta} \bigl( P_\eta^{\rm gp}} \oplus M_S^{\rm gp}  \bigr)
\end{align*} 

\noindent Then, if $\Sigma$ denotes the summation map $\oplus M_S^{\rm gp} \rightarrow M_S^{\rm gp}$, we obtain a commutative diagram

\begin{align*}
\xymatrixcolsep{10pc}\xymatrix{\raisebox{0pt}[0pt][5pt]{$\displaystyle \bigoplus_{q} \bigl( P_q^{\rm gp} \oplus \o M_S^{\rm gp} \bigr)$} \ar[d] \ar[r]^<>(0.5){(-\chi_1,-id,\chi_2,id+u_q\rho_q)} & \raisebox{0pt}[0pt][5pt]{$\displaystyle \bigoplus_{\eta} \bigl( \o M_S^{\rm gp} \oplus P_{\eta} \bigr)$} \ar[d]^{(\Sigma,\id)} \\
\displaystyle \bigoplus_q P^{\rm gp}_q \ar[r]_{(u_q\rho_q,-\chi_1,\chi_2)} &  \displaystyle \o M_{S}^{\rm gp} \oplus \bigoplus_{\eta} P_{\eta}}
\end{align*}

\noindent Thus, the morphism $(\Sigma, \id)$ descends to a morphism of the quotients $\pi_! \o R^{\rm gp} \rightarrow Q_S^{\rm gp}$, where $Q$ is the characteristic monoid of the Gross-Siebert minimal logarithmic structure: 

\begin{align*}
\xymatrixcolsep{5pc}\xymatrix{\raisebox{0pt}[0pt][5pt]{$\displaystyle \bigoplus_{q} P_q^{\rm gp} \oplus \o M_S^{\rm gp}$} \ar[d] \ar[r]  & \raisebox{0pt}[0pt][5pt]{$\displaystyle \bigoplus_{\eta} \bigl( \o M_S^{\rm gp} \oplus P_{\eta} \bigr)$} \ar[d]^{(\Sigma,\id)} \ar[r]& \pi_! \o R^{\rm gp} \ar[d] \\
\displaystyle \bigoplus_q P^{\rm gp}_q \ar[r] & \displaystyle \o M_{S}^{\rm gp} \oplus \bigoplus_{\eta} P_{\eta} \ar[r] & Q_S^{\rm gp}}
\end{align*}

Observe that the kernel of the map $P_{q}^{\rm gp} \oplus M_S^{\rm gp} \rightarrow P_{q}^{\rm gp}$ is simply $M_S^{\rm gp}$. The kernel of the map on the right on the other hand is the kernel of the summation map $\oplus_{\eta} M_{S}^{\rm gp} \rightarrow M_S^{\rm gp}$. The induced map of kernels then fits into the sequence 

\begin{align*}
\bigoplus_{q} M_S^{\rm gp} \rightarrow \bigoplus_{\eta} M_S^{\rm gp} \rightarrow M_{S}^{\rm gp}
\end{align*} 

But this is precisely the part of the complex that computes the homology of the geometric realization $B\mathcal{C}$ of the category $\mathcal{C}$ with coefficients in the group $\o M_S^{\rm gp}$, where the morphism on the right is the evaluation map. Since $X$ is connected, so is $\mathcal{C}$, and thus $H_0(B\mathcal{C},\o M_S^{\rm gp})=\o M_S^{\rm gp}$, and thus $\oplus_q \o M_S^{\rm gp}$ surjects onto the kernel of the evaluation map $\Sigma$. Thus, the map $\pi_!\o R^{\rm gp} \rightarrow Q$ is injective. On the other hand, the morphism $(\Sigma,\id)$ is certainly surjective, so $\pi_!\o R^{\rm gp} \rightarrow Q^{\rm gp}$ is also surjective. We thus get the isomorphism on the level of associated groups. \\

To extend the isomorphism on the level of actual monoids, note that since the summation morphism is also surjective on the level of monoids, the image of $\bigoplus_{\eta} (M_S^{\rm gp} \oplus P_\eta)$ surjects onto the image of $\o M_S \oplus(\bigoplus_\eta P_\eta)$, and hence $\pi_! \o R$ surjects onto $Q$. Since $\pi_! \o R$ and $Q$ are integral and the morphism of associated groups is injective, we obtain the desired isomorphism.  


\bibliographystyle{amsalpha}
\bibliography{minimal}

\end{document}